\documentclass[reqno,twoside,11pt]{amsart}

\usepackage{amsmath,amsfonts,calrsfs,fullpage,amssymb,xcolor,verbatim,eucal,yfonts,mathrsfs}

\usepackage{mathtools}
\usepackage{xcolor}
\mathtoolsset{showonlyrefs}

\usepackage[normalem]{ulem}

 \usepackage{xcolor}
\newtheorem{Theorem}{Theorem}[section]
\newtheorem{Definition}{Definition}
\newtheorem{Proposition}[Theorem]{Proposition}
\newtheorem{Lemma}[Theorem]{Lemma}
\newtheorem{Corollary}[Theorem]{Corollary}
\newtheorem{Remark}[Theorem]{Remark}
\newtheorem{construction}[Theorem]{Construction}

\newtheorem{Hypothesis}{Hypothesis}
\makeatletter
\@addtoreset{equation}{section}

\makeatother

\def\a{\alpha}

\def\R{\mathbb R}
\def\N{\mathbb N}

\def\E{\mathbb E}
\def\P{\mathbb P}

\def\eps{\epsilon}
\def\ds{\displaystyle}

\def\e{\epsilon}

\newcommand{\blue}{\textcolor{blue}}

\newcommand{\oc}{\varrho}

\title{
Large deviations for long-time occupation measures of  stochastic evolution equations  with small, asymptotically  rough noise
}\date{}

\author[A. Budhiraja]{amarjit Budhiraja}\address{Department of Statistics and Operations Research\\University of North Carolina,
Chapel Hill}
\email{budhiraj@email.unc.edu}
\thanks{A. Budhiraja was partially supported by NSF Grant DMS-2506010 (2025-2028), {\em  Some Interacting Particle Systems and Applications}}

\author[S. Cerrai]{Sandra Cerrai}
\address{Department of Mathematics\\
University of Maryland, College Park\\
}
\email{cerrai@umd.edu}
\thanks{S. Cerrai was partial  supported by NSF Grant DMS-2348096 (2024-2027), {\em 
 Multiscale Analysis of Infinite-Dimensional Stochastic Systems}}

\subjclass[2010]{}

\keywords{}

\begin{document}

\begin{abstract}
We study the long-time, small-noise behavior of a class of
dissipative stochastic evolution equations in a separable Hilbert
space, driven by a cylindrical Wiener process whose covariance
degenerates to a limiting operator in the strong operator topology.
A prototypical example is a stochastic reaction-diffusion equation
on a bounded domain with a spatially homogeneous but spectrally
regularized noise that becomes spatially rough in the limit.

We establish a large deviation principle for the family of 
occupation measures as the time horizon becomes large, the noise intensity
tends to zero, and the noise becomes increasingly rough.  
The result covers a broad class of dissipative stochastic
evolution equations in infinite dimensions, including equations
driven by asymptotically rough cylindrical noise whose covariance
need not be trace class. The finite-dimensional
setting has been studied in the recent work of Budhiraja and Zoubouloglou. The infinite-dimensional setting introduces several substantial new difficulties. Both the upper and lower bound arguments require a careful handling  of the unbounded evolution operator and inverse covariance operator: the former in establishing a key orthogonality property, and the latter in a central controllability construction. In addition, the increasing roughness of the driving noise must be balanced against its vanishing amplitude through estimates that remain uniform throughout this singular perturbation regime.
Proofs combine analytic semigroup
techniques, fractional domain space estimates and a careful treatment
of stochastic convolutions in weighted spaces.

The rate function is given by a simple explicit formula and its evaluation at a given probability measure on the state space is expressed as the average, with respect to the
measure, of the squared Cameron-Martin cost of
canceling the deterministic drift at each point.  The proof follows
the weak convergence approach based on the Bou\`{e}-Dupuis variational formula and relies on an explicit
construction of near-optimal controls, built by alternating
travel phases, which steer the process between prescribed target state
in the support of the occupation measure, and hold phases, during which the process is stabilized
near a target state and the occupation measure is shaped.
\end{abstract}

 \maketitle
 
 \tableofcontents
 
\section{Introduction}
We study a  class of semilinear stochastic evolution equations on a separable Hilbert space $H$ in a coupled asymptotic regime involving vanishing noise, increasingly long time horizons, and increasingly rough driving noise. More precisely, for each $\epsilon\in(0,1)$, we consider
\begin{equation}
 \label{eq1}
 du^\e(t)=\left(A u^\e(t)+Q\nabla U(u^\e(t))	\right)\,dt+s(\e) dw^\epsilon(t),\ \ \ \ \ u^\e(0)=u_0 \in\,H.
 \end{equation}
Here $A:D(A)\subset H\to H$ is the generator of an analytic semigroup $e^{tA}$, $Q$ is a symmetric and positive definite bounded linear operator on $H$, and $U:H\to \R$ is a Fr\'echet differentiable mapping such that $\nabla U:H\to H$ is Lipschitz continuous. Moreover $w^\epsilon(t)$, $t\geq 0$, is a cylindrical Wiener process with covariance $B_\epsilon$ such that $B^{1/2}_\epsilon\to Q^{1/2}$ strongly in $H$, as $\epsilon\to 0$. Finally $s:(0,1)\to (0,1)$ is such that
$s(\e)\to 0$, as $\e\to 0$.
 The parameter $\epsilon$ simultaneously governs the noise amplitude through $s(\epsilon)$, the observation time horizon, and the spatial roughness of the driving noise through the family of covariance operators $(B_\epsilon)_{\epsilon>0}$.
Consider the collection of occupation (empirical) measures
\begin{equation}\label{as20}\nu^\epsilon:=\epsilon\int_0^{1/\e} \delta_{u^\epsilon(t)}\,dt\end{equation}
viewed as  random probability measures on $H$. The main goal of this paper is to establish a large deviation principle for the family $(\nu^\epsilon)_{\epsilon>0}$, as $\eps \to 0$.

The assumptions on the model are collected in Hypotheses~\ref{H1} and~\ref{H2}. Hypothesis~\ref{H1} specifies the structural assumptions on the linear operators and the noise, including the dissipativity of $A$, the commutation of $A$ and $Q$,  the convergence of the covariance operators $B_\epsilon$ to $Q$, as $\epsilon\to0$,
 and a condition that suitably balances the increasing roughness of the driving noise with its vanishing amplitude. Hypothesis~\ref{H2} imposes regularity and dissipativity assumptions on the nonlinear potential $U$. Together, these assumptions ensure that 
 \eqref{eq1} is globally well posed and has suitable  global stability properties.

 Hypotheses~\ref{H1} and \ref{H2}  include spatially rough driving noises, including the case $Q=I$ and two prototypical examples to keep in mind are the following.
 
 \smallskip
 
 {\bf Example 1.} Take $\nabla U=0$ and $B_\e=Q$. Then   \eqref{eq1} becomes the following Ornstein-Uhlenbeck equation
 \begin{equation}
 \label{eq1-intro-ex1}
 du^\e(t)=A u^\e(t)\,dt+ s(\e)\,dw^Q(t),\ \ \ \ \ u^\epsilon(0)=u_0,	
 \end{equation}
 where $w^Q(t)$ is a cylindrical Wiener process with covariance $Q$.

 {\bf Example 2.} Assume  $H=L^2(D)$, for some bounded smooth domain $D\subset \R^d$, with $d\geq 1$, and take $A$ to be the realization of the Laplace operator in $L^2(D)$, endowed with Dirichlet boundary conditions. We fix $f:\R\to \R$ Lipschitz continuous and define
 \[U(h)=\int_D \int_0^{h(x)} f(t)\,dt\,dx,\ \ \ \ \ h \in\,H.\]
 Take $Q=I$ and  assume $B_\epsilon$ is a compact operator, for every $\e \in\,(0,1)$, such that $B_\epsilon\to I$, strongly, as $\epsilon\to 0$. Then, equation \eqref{eq1} becomes the following stochastic reaction-diffusion equation
\begin{equation}\label{eq1-intro-ex2}
\left\{\begin{array}{l}
\ds{\partial_t u^\epsilon(t, x) = \Delta u^\epsilon(t, x) + f(u^\epsilon(t, x)) + s(\e)\, \partial_t w^{B_\epsilon}(t, x), }\\[10pt]
\ds{u^\epsilon(t, x) = 0,\ \ \ \ x \in \partial D, \ \ \ \ \ \ 
u^\epsilon(0, x) = u_0(x).}
\end{array}\right.
\end{equation}
Here  $w^{B_\epsilon}(t)$ is a cylindrical Wiener process on $L^2(D)$, having covariance $B_\epsilon$ and $f:\mathbb{R}\to\mathbb{R}$ is some Lipschitz-continuous mapping. 

\smallskip
When $s(\epsilon)\equiv 1$, the above large deviation problem corresponds to the  classical theory of Donsker-Varadhan on empirical measures of Markov processes which under suitable stability conditions provides  a large deviation principle for the occupation measures of an ergodic Markov process, on some Polish state space $S$, over long time horizons (see \cite{DVI}-\cite{DVIII}).  Under certain conditions, the rate function for this setting is given by the variational formula
\begin{equation}\label{eq:bc954}
I(\gamma)
:= -\inf_{u\in\mathcal D(L),\; u>0}
\int_S
\frac{Lu}{u} \, d\gamma, \ \ \ \ \  \gamma \in \mathcal{P}(S),
\end{equation}
where $\mathcal{P}(S)$ denotes the space of probability measures on $S$ equipped with the topology of weak convergence, and $L$ denotes the infinitesimal generator of the Markov semigroup. 

Donsker-Varadhan theory for specific infinite-dimensional stochastic evolution equations has been investigated by a number of authors. 
 Initial works on this topic considered fixed nondegenerate noise and, using a criterion of \cite{Wu2001}, 
established occupation measure large deviation principles under conditions ensuring ergodicity and sufficient regularity of the associated Markov semigroup (e.g. strong Feller property)\cite{Gourcy2007, Gourcy2007NS}. More recently, Donsker-Varadhan type large deviation principles for occupation measures have been obtained for several classes of randomly forced dissipative PDEs, including stochastic Navier-Stokes equations, complex Ginzburg-Landau equations, and damped nonlinear wave equations. These works develop Kifer-type and multiplicative-ergodic approaches, with coupling, controllability, and, in the degenerate Fourier-space setting, Malliavin-calculus arguments, to verify the required mixing or uniform Feller properties
\cite{Jaksic2015,Jaksic2018,Martirosyan2019,Nersesyan2019,NersesyanPengXu2023}.

A key feature of the classical Donsker-Varadhan theory is that the rate function is finite only for probability measures that are absolutely continuous with respect to the invariant measure and this absolute continuity with respect to the stationary measure plays an essential role in both the form of the rate function and its analysis.
This picture changes dramatically when the noise amplitude vanishes. In that regime, the invariant measures themselves converge to singular limits, and the probability measures describing the large deviation behavior are typically singular with respect to the invariant measure of the limiting deterministic dynamics. 

The point that Donsker-Varadhan methods are not applicable for this singular noise setting was  discussed in \cite{BudhirajaZoubouloglou2024}, which considered a finite dimensional model and established a large deviation principle for  occupation measures of small-noise diffusions over long time horizons. The resulting rate function no longer involves relative entropies or variational formulas of the form in \eqref{eq:bc954}, but instead admits an explicit action representation. Specifically,~\cite{BudhirajaZoubouloglou2024}
studied  occupation measures for the finite dimensional small-noise diffusion of the form 
\[
dY^\epsilon(t)
=
-\psi(Y^\epsilon(t))\,dt
+
 s(\epsilon)
 \sigma(Y^\epsilon(t))\,dB(t), \qquad t\geq 0.
\]
Under suitable nondegeneracy, dissipativity and stability assumptions on the coefficients, they established a large deviation principle for the occupation measure of the form in \eqref{as20} (replacing $u^{\eps}$ with $Y^{\eps}$) with speed $(\epsilon s^2(\epsilon))^{-1}$ and the explicit rate function
\[
I(\gamma)
=
\frac12
\int_{\mathbb R^d}
\left|\sigma^T(y)a^{-1}(y)\psi(y)\right|^2
\,\gamma(dy), \ \ \ \ \ \  \gamma \in \mathcal{P}(\mathbb R^d),
\]
where $a=\sigma\sigma^T$.

The present work extends this program to the infinite-dimensional setting of equation~\eqref{eq1}. Under Hypotheses~\ref{H1} and~\ref{H2}, we show that the empirical occupation measures satisfy a large deviation principle on $\mathcal P(H)$ with speed $(\epsilon s^2(\epsilon))^{-1}$ and the explicit rate function
\[
I(\gamma)
=
\frac12
\int_H
\bigl|Q^{-1/2}Ax+Q^{1/2}\nabla U(x)\bigr|_H^2\,\gamma(dx), \ \ \ \ \ \gamma \in \mathcal{P}(H),
\]
with the convention that the integrand is equal to $+\infty$ whenever
$x\notin \mathcal D$, where $\mathcal D$ is the domain defined in
\eqref{as35}.

\smallskip

While the overall strategy - the weak convergence approach together with the Bou\'e-Dupuis variational representation -  is modeled on that of \cite{BudhirajaZoubouloglou2024}, the passage from finite to infinite dimensions is far from routine, and the difficulties we overcome are of three distinct kinds: the asymptotic roughness of the noise, the unboundedness of the operators $A$ and $Q^{-1}$ in the analysis of the upper bound, and the same unboundedness in the controllability construction underlying the lower bound. We discuss each in what follows.

\emph{Asymptotically rough noise.} The most distinctive feature of our setting is that the driving noise is not fixed but becomes increasingly rough as $\eps\to0$, with covariances $B_\eps$ that need not be trace class and that converge to $Q$ only in the strong operator topology. This is a genuinely new phenomenon, absent both from the finite-dimensional model of \cite{BudhirajaZoubouloglou2024} and from the earlier Donsker--Varadhan analyses of infinite-dimensional dynamics \cite{Gourcy2007,Gourcy2007NS,Jaksic2015,Jaksic2018,Martirosyan2019,Nersesyan2019,NersesyanPengXu2023}, which all treat a fixed, nondegenerate noise. Here the growing roughness must be balanced against the vanishing amplitude $s(\eps)$. Namely, the stochastic convolution loses spatial regularity as $\eps\to0$, and the entire argument must be carried out through moment and fractional-regularity estimates that remain uniform throughout this singular perturbation regime. This balance is quantified by Hypothesis~\ref{H1}(4) and, in the representative example of Remark~\ref{QAcommute}(2), by the condition $\gamma\beta<2$, which expresses that the divergence of the relevant trace norms as the regularization is removed is slow enough to be compensated by the factor $s^2(\eps)$.

\emph{Unbounded operators and the upper bound.} In the finite-dimensional case, the explicit form of the rate function is extracted from the weak limit of the controlled occupation measures through an orthogonality identity \cite[Lemma~4.4]{BudhirajaZoubouloglou2024}. In our setting this identity (Lemma~\ref{orthogonality}) is considerably more delicate, since it involves the unbounded operators $A$ and $Q^{-1}$, and It\^o's formula cannot be applied directly to test functions on $H$ built from them. We circumvent this obstruction by testing only against functions depending on finitely many modes of the eigenbasis of $A$, letting the number of modes grow only after the limit $\eps\to0$ has been taken. This mode-truncation is also what makes the effective domain of the rate function, namely $\mathcal D=\{x\in D(A):Ax\in\mathrm{Range}(Q^{1/2})\}$ in~\eqref{as35}, emerge naturally.

\emph{Unbounded operators and the lower bound.} The lower bound is based  on an explicit, recursive construction of near-optimal controls that alternate \emph{travel} phases, which rapidly steer the state between prescribed points $x_1,\dots,x_k$ in the support of the target measure, and \emph{hold} phases, during which the state is stabilized near a target point for a length of time proportional to the mass the measure places there (Subsection~\ref{lb-control-construction}). Both phases are affected by the unboundedness of the operators. The travel control is defined explicitly in the eigenbasis of $A$, and its cost is controlled by fractional norms $|(-A)^{1/2}\cdot|_H$, which forces the target points to be chosen in $\mathcal D$. The hold phase, on the other hand, requires a control that cancels the drift $Ax_*+Q\nabla U(x_*)$ at the target, which is possible only when $Ax_*\in\mathrm{Range}(Q^{1/2})$, that is, it requires working with the unbounded operator $Q^{-1/2}A$, and relies on a new stabilization estimate for the resulting shifted stochastic equation, established in Section~\ref{stab-lemma} (Lemma~\ref{stabilization}).

Finally, we emphasize a structural relaxation relative to \cite{BudhirajaZoubouloglou2024}. We do not assume that $U$ is twice differentiable, but only that $U\in C^{1,1}$, i.e.\ that $\nabla U$ is Lipschitz. This is essential in order to include the stochastic reaction-diffusion equation of Example~2, in which $\nabla U$ is a Nemytskii operator on $L^2(D)$ that is Fr\'echet differentiable only when the reaction term is affine.

\smallskip

As we already mentioned, the present work concerns the simultaneous asymptotic regime in which the noise amplitude vanishes while the observation time becomes large. This should be contrasted with the classical Freidlin-Wentzell theory, which considers the asymptotic behavior of invariant measures by first passing to the long-time limit for each fixed noise level and then studying the small-noise limit of the resulting stationary distributions. In finite dimensions, this theory is developed in the monograph of 
Freidlin and Wentzell~\cite{FreidlinWentzell}. Infinite-dimensional analogues for stochastic evolution equations have been established in a number of settings, including the papers~\cite{Sowers, CerraiRoeckner2005, BrzeCerr1998}, and many subsequent contributions.

The two asymptotic regimes capture fundamentally different phenomena. The Freidlin-Wentzell theory describes exponentially unlikely deviations of the invariant measure from its concentration near stable equilibria and is governed by quasipotentials. In contrast, the present work studies fluctuations of  occupation measures over observation windows whose lengths diverge simultaneously with the vanishing of the noise. As a consequence, the associated rate function  instead takes the explicit action form established in Theorem~\ref{main}. A related but distinct line of work concerns the connection between finite-time dynamical large deviations and stationary large deviations. In particular, Bertini, Gabrielli, and Landim~\cite{BertiniGabrielliLandim} showed that, for a class of finite dimensional diffusions, invariant measure rate function can be characterized through the asymptotic behavior of the finite-time dynamical action as the time horizon tends to infinity.

\smallskip

The remainder of the paper is organized as follows. Section~\ref{assumptions} introduces the notation, assumptions, and main result, and describes the Laplace principle formulation used in the proof. Section~\ref{varformula} presents the main variational representation for Brownian functionals used in our work. Section~\ref{a-priori} proves tightness of the occupation measures associated with a near-optimal collection of controlled processes. By studying the limits of these controlled occupation measures, Section~\ref{upperbound} establishes the Laplace principle upper bound, with the key orthogonality identity proved in Section~\ref{as30}. Section~\ref{stab-lemma}, in preparation for the proof of the large deviation lower bound, proves a stabilization estimate around an arbitrary  point in the state space.  Section~\ref{lowerbound} uses this estimate as a key ingredient to a recursive construction of a near optimal control to prove the Laplace principle lower bound. Finally, Section~\ref{compactness} proves compactness of the level sets of the rate function.

\section{Notations, assumptions  and main results}
\label{assumptions}

Throughout the paper,  $H$ is a separable Hilbert space, endowed with the scalar product $\langle\cdot,\cdot\rangle_H$ and the corresponding norm $\vert \cdot\vert_H$. We will denote by $\mathcal{L}(H)$ the space of bounded linear operators in $H$ and by  $\mathcal{L}^+_1(H)$ and $\mathcal{L}_2(H)$ the subspaces of non-negative operators having finite trace and of Hilbert-Schmidt operators, respectively. Given an arbitrary Polish space $\mathcal{X}$, we will denote by $B_b(\mathcal{X})$ the space of bounded Borel measurable mappings $G:\mathcal{X}\to \R$ and by $C_b(\mathcal{X})$ the subspace of bounded continuous functions, with
\[\|G\|_\infty:=\sup_{x \in\,\mathcal{X}} |G(x)|.\] If $d_{\mathcal{X}}$ is the distance in $\mathcal{X}$, we denote by $\text{Lip}_b(\mathcal{X})$ the subspace of Lipschitz continuous functions, with 
\[\|G\|_{\text{\tiny{Lip}}}:=\|G\|_\infty+[G]_{\text{\tiny{Lip}}}:=\|G\|_\infty+\sup_{x\neq y}\frac{|G(x)-G(y)|}{d_{\mathcal{X}}(x,y)}.\]
Finally, for every $\kappa>0$, we  denote by $\text{Lip}_{b, \kappa}(\mathcal{X})$ the subspace of functions having Lipschitz norm bounded by $\kappa$.
Throughout the paper, $c,C,c_1,c_2$, etc. denote positive constants whose values may change from line to line, even within the same display.

\smallskip

Recall the stochastic evolution equation introduced in \eqref{eq1}, given in terms of the operators $A,Q$, the nonlinearity $U$, cylindrical Brownian motion $w^{\eps}$ with covariance $B_{\eps}$, and noise intensity $s(\eps) \to 0$. 
 In what follows, we will assume the following conditions for the operators $A$ and $Q$, and for the noise $w^\e(t)$.
\begin{Hypothesis}
	\label{H1} $\;$
	\begin{enumerate}
	\item[1.]  $A:D(A)\subset H\to H$ is a self-adjoint operator with compact resolvent, that generates an analytic semigroup $e^{tA}$, $t\geq 0$. Moreover, there exists $\lambda>0$ such that
	\[\langle A x,x\rangle_H\leq -\lambda \,|x|_H^2,\ \ \ \ \ x \in\,D(A).\]
	\item[2.] The operator $Q$ belongs to $\mathcal{L}(H)$ and  $\operatorname{Ker} Q=\{0\}$. Moreover, $Q$  commutes with $A$, 
	in the sense that
\[
   Q\, e^{tA}  =  e^{tA}\, Q, \ \ \ \ \  t \geq 0,
\]
and for every $t>0$, $Q^{-1}Ae^{tA}$ is a bounded linear operator on $H$.
	 \item[3.] 	
	For every $\e \in\,(0,1)$ the process $w^\epsilon(t)$, $t\geq 0$, is a cylindrical Wiener process defined on the stochastic basis $(\Omega, \mathcal{F},  (\mathcal{F}_t )_{t\geq 0}, \mathbb{P})$, and having covariance $B_\epsilon$. The operator $B_\e$ is symmetric and non-negative, commutes with $A$,  and 	\[\lim_{\e\to 0} B_\epsilon^{1/2} x=Q^{1/2} x,\ \ \ \ \ x \in\,H.\]

	\item[4.] There exists $\alpha \in\,(0,1/2)$ such that for every
$\epsilon \in\,(0,1)$ it holds
\[\int_0^t (s\wedge 1)^{-2\alpha}e^{sA}B_\epsilon e^{sA}\,ds
  \in\,\mathcal{L}^+_1(H),\ \ \ \ \ t \geq 0.\]
Moreover,
\begin{equation}\label{as1}
\sup_{\e \in\,(0,1)} s^2(\epsilon)
  \int_0^{\infty} (s\wedge 1)^{-2\alpha}\,
  \mathrm{tr}\,\bigl(e^{sA}B_\epsilon e^{sA}\bigr)\,ds <\infty.
\end{equation}
	\end{enumerate}
	\end{Hypothesis}
	
	\begin{Remark}\label{QAcommute}
{\em {\em 1.} 
Hypotheses ~\ref{H1}.1 and ~\ref{H1}.2 
  say that the operators $A$ and $Q$ admit a common orthonormal
   eigenbasis $ (e_k )_{k \in \N}$ of $H$, namely \[A e_k = -\alpha_k e_k,\ \ \ Q e_k = q_k e_k,\] for some positive nondecreasing sequence $(\alpha_k)_{k \in \N}$ such that $\alpha_k \to \infty$ as $k \to \infty$, and a bounded
   sequence $ (q_k )_{k \in\,\mathbb{N}} \subset [0, \|Q\|_{\mathcal{L}(H)}]$.
A standard consequence is that $Q$
leaves $D(A)$ invariant and $Q A x = A Q x$ for every $x \in D(A)$;
similarly, $Q$ commutes with the fractional powers $(-A)^\gamma$ for
every $\gamma \in \R$, and with the projections $\pi_n$ onto the first
$n$ eigenspaces of $-A$, for every $n \in \N$. 
A natural class of
examples is  given by $Q = g(-A)$ for some   Borel measurable
function $g : [0, \infty) \to [0, \infty)$ such that its restriction to $\sigma(-A)$ is bounded and $1/g(x)$ grows subexponentially as $x\to \infty$, including in particular
the choices $Q = (-A)^{-\gamma}$, for $\gamma \geq 0$.

\smallskip

{\em 2.}  A typical example of an operator $B_\e \in\,\mathcal{L}(H)$ satisfying Hypotheses \ref{H1}.3 and \ref{H1}.4 is
\[B_\e h=\sum_{k=1}^\infty q_k(1+s(\epsilon)^\gamma \alpha_k)^{-\beta}\langle h,e_k\rangle_H e_k,\ \ \ \ \ \ h \in\,H,\]
for some { $\gamma>0$, $\beta \ge 0$} that have to be chosen appropriately in order to fulfill those hypotheses. For example, if we take $q_k\sim \alpha_k^{-\vartheta}$, for some $\vartheta\geq 0$, and  have $\alpha_k\sim k^\varrho$, for some $\varrho>0$, we have
{\[q_k(1+s(\epsilon)^\gamma \alpha_k)^{-\beta}{\sim } k^{-\vartheta\varrho}(1+s(\e)^{\gamma} k^{\varrho})^{-\beta},\]}
 so that
 {
\begin{multline*}
\int_0^{+\infty}(s\wedge 1)^{-2\alpha}\mathrm{tr}\,\bigl(e^{sA}B_\epsilon e^{sA}\bigr)\,ds\\
\leq c \sum_{k=1}^\infty k^{-\vartheta\varrho}(1+s(\e)^{\gamma} k^{\varrho})^{-\beta}\left(\int_0^1 s^{-2\alpha} e^{-2 k^{\varrho} s}\,ds+\int_1^{+\infty} e^{-2 k^{\varrho} s}\,ds\right)\\
\leq c \sum_{k=1}^\infty k^{-(\vartheta\varrho+\varrho-2\alpha \varrho)}(1+s(\e)^{\gamma} k^{\varrho})^{-\beta}\\
\leq
c\min\left\{\,s(\epsilon)^{-\gamma \beta}\sum_{k=1}^\infty k^{-(\vartheta\varrho+\beta \varrho+\varrho-2\alpha \varrho)}, \sum_{k=1}^\infty k^{-(\vartheta\varrho+\varrho-2\alpha \varrho)}\right\}
\end{multline*}

 In particular, \eqref{as1} holds if 
 \[\beta>\frac 1{\varrho}-\vartheta-1+2\alpha,\ \ \ \ \ \gamma \beta<2\]
 or $\beta=0$ and $\vartheta > \varrho^{-1}-1+2\alpha$.

 In case $A$ is the realization of the Laplacian on a sufficiently smooth bounded domain, endowed with Dirichlet boundary conditions, we have $\varrho=2/d$, so that the first condition above becomes
 \[\beta>\left(\frac{d}2-\vartheta -1\right)+2\alpha,\ \ \ \ \ \gamma \beta< 2.\]
When $d=1$ we can take $\vartheta=0$ (space-time white noise) and $\beta=0$, with no condition on $\gamma$ since then the condition holds for any $\alpha \in (0,1/4)$.
 On the other hand, when $d\geq 2$ and $\vartheta=0$, we need to take 
\[\beta>\frac{d-2}2+2\alpha,\ \ \ \ \ \ \gamma \beta < 2.\]
Thus the regularization exponent $\beta$ is determined by two competing requirements. The first requirement
ensures that the covariance is sufficiently regularized at high frequencies for the first part of Hypothesis~\ref{H1}(4) to hold. 
Increasing $\beta$ improves the spatial regularity of the covariance, but also causes the relevant trace norm to diverge more rapidly as the regularization is removed. The condition $\gamma\beta<2$, that ensures the second part of  Hypothesis~\ref{H1}(4), expresses that this divergence must be sufficiently slow that it is compensated by the vanishing noise intensity $s(\epsilon)^2$. 
	}}
\end{Remark}
	
	As for the nonlinearity, we will assume the following conditions.
	
	\begin{Hypothesis}\label{H2}
	 The mapping $U:H\to \R$ is continuously differentiable and $\nabla U:H\to H$ is Lipschitz continuous, with $\nabla U(0)=0$ and 
	 {
	\[[\nabla U]_{\text{{\em \tiny{Lip}}}}:=\sup_{x \neq y}\frac{|\nabla U(x)-\nabla U(y)|_H}{|x-y|_H}{<\lambda\,\|Q\|_{\mathcal{L}(H)}^{-1}.}\]
{In what follows we will denote	$\omega:= \lambda\, - [\nabla U]_{\text{{\em \tiny{Lip}}}}\|Q\|_{\mathcal{L}(H)} >0.$}}
	
\end{Hypothesis}
{We remark that, unlike in \cite{BudhirajaZoubouloglou2024}, which treats a finite-dimensional model, we do not assume that $U$ is $C^2$. Instead, we assume only that $U\in C^{1,1}$, namely that $U$ is continuously Fr\'{e}chet differentiable and that $\nabla U$ is Lipschitz. This is important in order to cover Example~2. Indeed, in that example $\nabla U$ is the Nemytskii operator $x\mapsto f(x(\cdot))$ on $L^2(D)$. For nonatomic $D$, Fr\'{e}chet differentiability of this Nemytskii operator as a map from $L^2(D)$ to $L^2(D)$ would force $f$ to be affine, thereby excluding the nonlinear reaction terms allowed here.
}

{Hypothesis \ref{H1} and \ref{H2} will be taken to hold throughout the work and may not be noted explicitly in the statement of various results.}
It is well known  that   under Hypotheses \ref{H1} and \ref{H2} (not all conditions included there are needed for this), for every  $\e \in\,(0,1)$ and every $u_0 \in\,H$ and $T>0$, equation \eqref{eq1} admits a unique adapted mild solution $u^\epsilon \in\,L^2(\Omega;C([0,T];H) )$
 (for a proof see e.g. {\cite[Theorem 7.4]{DPZ})}. {The condition $\omega>0$ guarantees that the linear dissipation dominates the nonlinear drift, yielding global exponential stability of the deterministic dynamics.}
 In what follows, we will study the validity of a large deviation principle for the family of measures $\{\nu^\epsilon\}$ introduced in \eqref{as20}.

For every $\e \in\,(0,1)$ we define 
\[v^\e(t):=u^\e(t/\e),\ \ \ \ \ \ t \in\,[0,1],\]
where $u^\e$ is the solution of equation \eqref{eq1}.
It is immediate to check that $v^\e$ is the solution of the equation
\begin{equation}
\label{eq1-main}
dv^\epsilon(t) = \frac 1\e \big(A v^\epsilon(t) + Q \nabla U(v^\epsilon(t))\big) dt + \frac{s(\e)}{\sqrt{\e}}\, dw^\epsilon(t),\ \ \ \ \ 
v^\epsilon(0) = u_0,
	\end{equation}
where $w^\epsilon(t)$ is a cylindrical Wiener process with covariance $B_\epsilon$. {We remark that the Brownian motion here is different from the one in \eqref{eq1}, given by the time and space scaling of the latter Brownian motion, but we use the same notation to simplify presentation.}
Furthermore, $\nu^\epsilon$ in \eqref{as20} can be written using this scaled process as
\begin{equation}\label{nu-epsilon}\nu^\epsilon=\int_0^1 \delta_{v^\e(t)}\,dt.\end{equation}

\subsection{Large deviations and Laplace principle}

Let $\mathcal{X}$ be a Polish space. A function $I : \mathcal{X} \to [0, \infty]$
is called a \emph{rate function} on $\mathcal{X}$ if it has compact level sets,
i.e., for every $\kappa \in (0, \infty)$, the level set
$\Phi_\kappa:=\{x \in \mathcal{X} : I(x) \leq \kappa\}$ is a compact subset of $\mathcal{X}$.
As a convention, the infimum over the empty set is taken to be $+\infty$.
 
A family $(X^\eps)_{\eps>0}$ of $\mathcal{X}$-valued random variables,
defined on a probability space $(\Omega, \mathcal{F}, \mathbb{P})$, is said to
satisfy the \emph{Laplace principle} on $\mathcal{X}$ with rate function $I$ and
speed $\alpha(\eps)$, where $\alpha(\eps) \to \infty$ as $\eps\downarrow 0$, if for
every bounded and continuous function $F : \mathcal{X} \to \mathbb{R}$,
\[
  \lim_{\eps \to 0} -\frac{1}{\alpha(\eps)} \log
  \mathbb{E} \exp\big (-\alpha(\eps) F(X^\eps)\big)
   =  \inf_{x \in \mathcal{X}}\,\bigl( F(x) + I(x) \bigr).
\]
We say that the \emph{Laplace upper} (respectively \emph{lower}) \emph{bound}
holds if the left-hand side is bounded below (respectively above) by the
right-hand side. It is a classical result (see e.g.
\cite[Theorem~1.8]{BudhirajaDupuis2019}) that, for a rate function $I$, the family
$(X^\eps)_{\e>0}$ satisfies the Laplace principle on $\mathcal{X}$ with rate function
$I$ and speed $\alpha(\eps)$ if and only if it satisfies the \emph{large deviation
principle} (LDP) on $\mathcal{X}$ with the same rate function and speed. That is,
for every Borel set $A \subset \mathcal{X}$,
\[
  -\inf_{x \in A^\circ} I(x)
   \leq 
  \liminf_{\eps \to 0} \frac{1}{\alpha(\eps)} \log \mathbb{P}(X^\eps \in A)
   \leq 
  \limsup_{\eps \to 0} \frac{1}{\alpha(\eps)} \log \mathbb{P}(X^\eps \in A)
   \leq  -\inf_{x \in \bar A} I(x),
\]
where $A^\circ$ and $\bar A$ denote, respectively, the interior and closure of $A$
in $\mathcal{X}$.
 
In the present work, the role of $\mathcal{X}$ is played by the space
$\mathcal{P}(H)$ of Borel probability measures on the separable Hilbert space $H$,
equipped with the topology of weak convergence, which makes $\mathcal{P}(H)$ a
Polish space. This topology can be
metrized using the bounded-Lipschitz distance
\[
  d_{\mathrm{BL}}(\mu, \nu)
   \coloneqq  \sup_{f \in \text{Lip}_{b, 1}(H)}
    \left| \int_H f \, d\mu - \int_H f \, d\nu \right|,
  \qquad \mu, \nu \in \mathcal{P}(H).
\]
 We establish the Laplace
principle for the family $(\nu^\eps)_{\eps \in (0,1)}$  
of $\mathcal{P}(H)$-valued random variables introduced in \eqref{nu-epsilon}, with speed $\alpha(\eps) = (\eps\, s^2(\eps))^{-1}$.
In view of the equivalence recalled above, our main result, stated in
Theorem~\ref{main} below, may equivalently be read as the assertion that
$ (\nu^\eps)_{\e \in\,(0,1)}$ satisfies a large deviation principle on $\mathcal{P}(H)$ with the
same rate function and speed.

\subsection{Formulation of main result}

We introduce the following mapping 
$I : \mathcal{P}(H) \to [0, \infty]$ defined by
\begin{equation}\label{action}
  I(\gamma)  \coloneqq  \frac{1}{2} \int_H
    \bigl| Q^{-1/2} A x \,+\, Q^{1/2} \nabla U(x) \bigr|_H^2 \, \gamma(dx),
  \qquad \gamma \in \mathcal{P}(H),
\end{equation}
with the convention that the integrand equals $+\infty$ at any point $x \in H$
{for which $x \notin D(A)$ or $x\in D(A)$
and $A x \notin \mathrm{Range}(Q^{1/2})$}. This paper is devoted to the proof of the following result.

\begin{Theorem}\label{main}
Suppose that Hypotheses~\ref{H1} and~\ref{H2} hold. Then the map $I:\mathcal{P}(H)\to [0,+\infty]$ defined in \eqref{action} is a rate function on
$\mathcal{P}(H)$. Moreover, the family $(\nu^\eps)_{\eps \in (0,1)}$ of
$\mathcal{P}(H)$-valued random variables defined in \eqref{nu-epsilon}
satisfies the Laplace principle on $\mathcal{P}(H)$ with rate function $I$ and
speed $(\eps\, s^2(\eps))^{-1}$. Equivalently, $(\nu^\eps)_{\epsilon \in\,(0,1)}$ satisfies a large
deviation principle on $\mathcal{P}(H)$ with the same rate function and speed.
\end{Theorem}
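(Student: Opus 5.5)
The proof splits into three parts: compactness of the level sets of $I$, the Laplace upper bound, and the Laplace lower bound. For the last two we use the weak convergence approach.

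\emph{Variational representation.} By the Bou\'e--Dupuis formula, applied to the cylindrical Wiener process $w^\eps$ with covariance $B_\eps$ (written through a standard cylindrical process $\beta$ as $w^\eps=B_\eps^{1/2}\beta$),
\[
-\eps s^2(\eps)\log \E\, e^{-F(\nu^\eps)/(\eps s^2(\eps))}=\inf_{\varphi}\E\Bigl[\tfrac{1}{2}\int_0^1 |\varphi(t)|_H^2\,dt+F(\bar\nu^{\eps,\varphi})\Bigr].
\]
Here $\bar\nu^{\eps,\varphi}=\int_0^1\delta_{\bar v^\eps(t)}dt$, and $\bar v^\eps$ solves the controlled version of \eqref{eq1-main}, in which the term $\eps^{-1}B_\eps^{1/2}\varphi(t)\,dt$ is added to the drift. (The scaling is chosen so that the cost is exactly $\frac12\int|\varphi|^2$.)

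\emph{Upper bound.} Take near-optimal controls with $\E\int_0^1|\varphi_\eps|^2\le M$. First prove tightness of $\{\bar\nu^{\eps,\varphi_\eps}\}$ in $\mathcal P(H)$. Write $\bar v^\eps$ as a deterministic part plus the stochastic convolution. Hypothesis~\ref{H1}(4) together with the factorization method gives uniform moment bounds for the stochastic convolution in $D((-A)^{\alpha'})$, after multiplication by $s(\eps)/\sqrt\eps$ and the time rescaling. The dissipativity $\omega>0$ gives an energy estimate for the controlled part, with the control contributing $\eps^{-1}\int|\varphi|^2$ over a time scale $\eps$. Together with compactness of $(-A)^{-\alpha'}$, these yield tightness.

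Next identify limits. Along a subsequence, $(\bar\nu^\eps,\ \eps^{-1}\!\int\delta_{\bar v^\eps}\otimes\varphi_\eps\,dt\ \text{suitably})$ converges to $(\gamma,\Theta)$, where $\Theta$ is a joint occupation measure on $H\times H$ whose first marginal is $\gamma$ and whose second moment is at most $\liminf$ of the cost. The key step is the orthogonality identity: for test functions $f(x)=g(\langle x,e_1\rangle,\dots,\langle x,e_n\rangle)$ with $g$ smooth, apply It\^o's formula to $f(\bar v^\eps)$ and multiply by $\eps$. The drift terms at order $\eps^{-1}$ must balance, the It\^o correction vanishes because $s^2(\eps)\to0$, and we obtain
\[
\int \langle \nabla f(x),\ Ax+Q\nabla U(x)+Q^{1/2}y\rangle\,\Theta(dx\,dy)=0 .
\]
Letting $g$ range over a rich enough class, conclude that $\pi_n(Ax+Q\nabla U(x)+Q^{1/2}\bar y(x))=0$ for $\gamma$-a.e.\ $x$ and every $n$, where $\bar y(x)$ is the conditional mean of $y$ given $x$. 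Letting $n\to\infty$ gives $x\in\mathcal D$ with $\bar y(x)=-(Q^{-1/2}Ax+Q^{1/2}\nabla U(x))$ for $\gamma$-a.e.\ $x$. Jensen's inequality and Fatou's lemma then give $\liminf \ge I(\gamma)+F(\gamma)$.

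\emph{Lower bound.} By lower semicontinuity and density, it suffices to approximate a $\gamma$ with $I(\gamma)<\infty$ by finitely supported measures $\sum_i p_i\delta_{x_i}$ with $x_i\in\mathcal D$ and $I$-values converging. This uses truncation by $\pi_n$ and continuity of the integrand along such approximations; we expect it to be delicate and may need Lipschitz control of $\nabla U$. For such a discrete measure, build controls in alternating phases. In a \emph{travel} phase of length $o(1)$, defined in the eigenbasis of $A$, the state is steered to $x_i$; its cost is controlled by $|(-A)^{1/2}x_i|$ and vanishes as the phase length goes to $0$ after multiplication by $\eps$. In a \emph{hold} phase of duration $p_i$, use the feedback control $\varphi=-Q^{-1/2}Ax_i-Q^{1/2}\nabla U(x_i)$ plus a stabilizing correction. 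The stabilization Lemma~\ref{stabilization} is the tool here: the shifted equation satisfied by $\bar v^\eps-x_i$ is exponentially stable at rate $\omega/\eps$, so the state stays near $x_i$ in probability. The resulting cost converges to $\sum_ip_i\frac12|Q^{-1/2}Ax_i+Q^{1/2}\nabla U(x_i)|^2$ and the occupation measure converges to $\sum p_i\delta_{x_i}$.

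\emph{Compactness of level sets.} Lower semicontinuity of $I$ follows from the variational characterization that emerged in the upper bound (or from Fatou's lemma with a lower semicontinuous integrand). Tightness of $\{I\le\kappa\}$ follows from the coercive estimate $|Q^{-1/2}Ax+Q^{1/2}\nabla U(x)|\ge c|(-A)^{1/2}x|$, obtained by pairing with $x$ and using $\omega>0$. Since $(-A)^{-1/2}$ is compact, this gives tightness.

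We expect the main obstacle to be the orthogonality identity in the upper bound. There the unbounded $A$ and $Q^{-1}$ prevent direct use of It\^o's formula, and the identification of $\mathcal D$ depends on carefully exchanging the limits in $\eps$ and in the truncation level $n$.
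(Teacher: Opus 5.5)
\textbf{Gap: the identification step in your upper bound is false.} From
\[
\int\langle\nabla f(x),\,Ax+Q\nabla U(x)+Q^{1/2}y\rangle_H\,\Theta(dx\,dy)=0\qquad\text{for all cylindrical } f
\]
it does \emph{not} follow that $\pi_n\bigl(Ax+Q\nabla U(x)+Q^{1/2}\bar y(x)\bigr)=0$ for $\gamma$-a.e.\ $x$. The identity only says that $\gamma$ is infinitesimally invariant for the vector field $V(x)=Ax+Q\nabla U(x)+Q^{1/2}\bar y(x)$, and nonzero fields can have invariant measures.

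Here is a counterexample. Take $U=0$ and let $\gamma$ be the uniform measure on the unit circle of $\mathrm{span}\{e_1,e_2\}$. Let $J$ be the rotation by $\pi/2$ in that plane, extended by $0$, and set $\bar y(x)=Q^{-1/2}(Jx-Ax)$. The identity holds for every cylindrical $f$, because $\gamma$ is rotation invariant. Yet $V(x)=Jx\neq0$ on the support of $\gamma$.

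Nothing in the It\^o argument excludes such limits. Besides a drift-cancelling part, the limiting control can contain a component that merely transports mass along $\gamma$; in the example its cost strictly exceeds $I(\gamma)$. So $\bar y=-\Psi$ fails in general, and with it your derivations of $\gamma(\mathcal D)=1$ and of the cost bound. The missing idea is the gradient structure of the drift, which your proposal never uses: $Ax+Q\nabla U(x)=Q\nabla\eta(x)$ with $\eta(x)=\frac12\langle x,Q^{-1}Ax\rangle_H+U(x)$. This relies on $A$ being self-adjoint and commuting with $Q$, and it is what makes the explicit formula true.

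\textbf{How to fix it (the paper's route).} Do not let $f$ range over a class. Test the It\^o identity with the single function $\eta_n(y)=\frac12\langle\pi_ny,Q^{-1}A\pi_ny\rangle_H+U(\pi_ny)$. Set $G_n:=Q^{1/2}\nabla\eta_n=Q^{-1/2}A\pi_ny+Q^{1/2}\pi_n\nabla U(\pi_ny)$ and $\tilde\Psi_n(y):=Q^{-1/2}A\pi_ny+Q^{1/2}\pi_n\nabla U(y)$. One obtains
\[
\int_H\langle G_n,\tilde\Psi_n\rangle_H\,d\gamma=-\int_H\langle G_n,\pi_n\bar y\rangle_H\,d\gamma .
\]
Since $|G_n-\tilde\Psi_n|_H\le c\,|y-\pi_ny|_H$, Cauchy--Schwarz gives $\limsup_n\int|\tilde\Psi_n|_H^2\,d\gamma\le\int|\bar y|_H^2\,d\gamma$.

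In the common eigenbasis, $|\tilde\Psi_n(y)|_H^2$ is a partial sum of a series whose total is $|\Psi(y)|_H^2$ (equal to $+\infty$ off $\mathcal D$). Monotone convergence therefore yields, at once, $\gamma(\mathcal D)=1$ and $2I(\gamma)\le\int|\bar y|_H^2\,d\gamma$. Jensen then gives the cost bound. Equivalently, $\bar y=-\Psi+w$ with $w\perp\Psi$ in $L^2(\gamma)$; in the example above, $w=Q^{-1/2}Jx$.

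Because $\eta_n$ is unbounded and only $C^{1,1}$, you also need three technical ingredients:
\begin{enumerate}
\item a cutoff $\phi_R(\eta_n)$;
\item It\^o's formula for $C^{1,1}$ functions on $\pi_n(H)$;
\item the bound $|\pi_ny|_H\le c_n(R)$ on $\{|\eta_n|\le R+1\}$, which comes from $\omega>0$.
\end{enumerate}
Moreover, the control coordinate of $\Theta$ cannot be tight in $H$ with its strong topology. You need a space such as $V_\delta$, in which $H$-balls are compact.

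\textbf{Smaller points in the lower bound.}
\begin{itemize}
\item The travel cost is governed by $|Q^{-1/2}(-A)^{1/2}(x_{i+1}-e^{A}\bar v^\eps(P_i))|_H$, not by $|(-A)^{1/2}x_i|_H$. Its finiteness for the random, merely $H$-valued starting point is exactly where the boundedness of $Q^{-1}Ae^{tA}$ in Hypothesis~\ref{H1}(2) is used.
\item The reduction to finitely supported measures needs no continuity of the integrand. Empirical measures of i.i.d.\ samples $\xi_i$ from $\gamma$ converge weakly a.s., and the strong law gives $\frac1k\sum_{i\le k}|\Psi(\xi_i)|_H^2\to2I(\gamma)$.
\item The hold phase needs no stabilizing correction: the constant control already produces shifted dynamics that are dissipative at rate $\omega$.
\end{itemize}
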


\begin{Remark}\label{rate-function}
{\em 
 Because $A$ is unbounded, the integrand
$|Q^{-1/2} A x + Q^{1/2} \nabla U(x)|_H^2$ takes the value $+\infty$ on a large subset
of $H$. The effective domain of $I$ is therefore concentrated on measures $\gamma$
supported on the set
\begin{equation} \label{as35}
  {\mathcal{D}  \coloneqq  \bigl\{ x \in D(A)  :  A x \in \mathrm{Range}(Q^{1/2})
    \ \bigr\}}.
\end{equation}
Note that, $I(\delta_0) = 0$, which is consistent with the law of large numbers
$\nu^\eps \to \delta_0$ in probability as $\eps\downarrow 0$. More generally,
$I(\gamma) < \infty$ requires $\gamma$ to be concentrated on points $x$ at which
both $A x$ lies in the range of $Q^{1/2}$ and the resulting vector has finite
$H$-norm; the interplay between $A$ and $Q$ in Hypothesis~\ref{H1}
guarantees that this set is non-trivial.

}
\end{Remark}

\subsection{About the proof of Theorem~\ref{main}}\label{proof}

In order to prove Theorem~\ref{main}, we will first show, in
Section~\ref{upperbound}, the Laplace upper bound. Namely,  for every
$F \in C_b(\mathcal{P}(H))$,
\begin{equation}\label{laplace-upper}
  \liminf_{\eps \to 0}\, -\eps\, s^2(\eps) \, \log \mathbb{E}
    \exp\!\Big( -\frac{1}{\epsilon s^2(\epsilon)}F(\nu^\epsilon) \Big)
   \ge  \inf_{\gamma \in \mathcal{P}(H)}
    \big( F(\gamma)  +  I(\gamma) \bigr).
\end{equation}
Then, in Section~\ref{lowerbound}, we will prove the complementary
Laplace lower bound: for every $F \in C_b(\mathcal{P}(H))$,
\begin{equation}\label{laplace-lower}
  \limsup_{\eps \to 0}\, -\eps\, s^2(\eps) \, \log \mathbb{E}
    \exp \Big( -\frac{1}{\epsilon s^2(\epsilon)}F(\nu^\epsilon) \Big)
   \leq  \inf_{\gamma \in \mathcal{P}(H)}
    \bigl( F(\gamma)  +  I(\gamma) \,\bigr).
\end{equation}
Finally, in Section~\ref{compactness}, we will show that the function
$I$ defined in~\eqref{action} has compact level sets in
$\mathcal{P}(H)$, and is therefore a rate function. Together, these three
results complete the proof of Theorem~\ref{main}.

\section{A variational formula}\label{varformula}

In this section, we recall a variational representation for exponential
moments of functionals of a Hilbert space valued Wiener process. The
representation is due to Bou\'e and Dupuis~\cite{BoueDupuis1998} in the
finite-dimensional case and was extended to the infinite-dimensional
setting in \cite{BudhirajaDupuis2000}; in the form used below it can be found in
\cite{BudhirajaDupuis2019}. It
will be the starting point of the proof of our main result on the large
deviation principle for the family $(\nu^\eps )_{\eps \in (0,1)}$ on
$\mathcal{P}(H)$.

Throughout this section, let $(\Omega, \mathcal{F}, (\mathcal{F}_t )_{t \in\,[0,1]}, \mathbb{P})$
be a filtered probability space, with the filtration $(\mathcal{F}_t )_{0 \leq t \leq 1}$
satisfying the usual conditions. Recall from Section~\ref{assumptions} that for
every $\eps \in (0,1)$, the process $w^\eps(t)$, $t \in [0,1]$, is a cylindrical Wiener
process on $H$ with covariance $B_\eps$, where $B_\eps$ is a symmetric,
non-negative bounded linear operator on $H$ such that
$B_\eps^{1/2} \to Q^{1/2}$ strongly in $H$ as $\eps \downarrow 0$. 
{ Furthermore, as usual, we assume that 
for $0\leq s \leq t$, $w^\eps(t)-w^\eps(s)$ is independent of $\mathcal{F}_s$.}

We denote by $(e_k )_{k \in \mathbb{N}}$ a fixed complete orthonormal system in $H$.
With this choice, the cylindrical Wiener process $w^\eps$ admits the formal
expansion
\begin{equation}\label{weps-expansion}
  w^\eps(t)  =  \sum_{k=1}^\infty B_\eps^{1/2} e_k \, \beta_k(t),
  \qquad t \in [0,1],
\end{equation}
where $(\beta_k )_{k \in \mathbb{N}}$ is a sequence of independent, standard,
real-valued $(\mathcal{F}_t )$-Brownian motions. The series in
\eqref{weps-expansion} converges in $L^2(\Omega; K)$ for any Hilbert
space $K \supset H$ into which $H$ is embedded by a Hilbert-Schmidt operator.

\medskip

We now introduce the relevant classes of admissible controls. 
We denote by $\mathcal{A}$
the collection of all $(\mathcal{F}_t )_{t \in\,[0,1]}$-progressively measurable,
$H$-valued processes $\psi(t)$, $t \in\,[0,1]$, that satisfy
\[
  \mathbb{E} \int_0^1 | \psi(s) |_H^2 \, ds  <  \infty.
\]
For $M \in (0,\infty)$, let
\begin{equation}\label{SM-def}
  \mathcal{S}_M  \coloneqq  \Bigl\{ h \in L^2(0,1;H)  : 
    \int_0^1 | h(s) |_H^2 \, ds  \leq  M \Bigr\}.
\end{equation}
The set $\mathcal{S}_M$, equipped with the weak topology inherited from $L^2(0,1;H)$,
is a compact metrizable space (cf.~\cite[Lemma~3.11]{BudhirajaDupuis2019}). We set
\[
  \mathcal{A}_{b,M}  \coloneqq  \bigl\{ \psi \in \mathcal{A}  :  \psi(\cdot, \omega) \in \mathcal{S}_M,
    \text{ for } \mathbb{P}\text{-a.e. } \omega \in \Omega \bigr\},
  \qquad
  \mathcal{A}_b\   \coloneqq\ \  \bigcup_{M = 1}^\infty \mathcal{A}_{b,M}.
\]

\medskip

 The following variational representation holds.

\begin{Theorem}\label{varrep-thm}
Let $G \in B_b(C([0,1] : H))$ and let $\eps \in (0,1)$. Then
\begin{equation}\label{varrep}
  -\log \mathbb{E} \exp\bigl( -G(w^\eps) \bigr)
   = 
  \inf_{\psi \in \mathcal{R}}
    \mathbb{E}\!\left(
      G\!\left( w^\eps + \int_0^{\,\cdot\,} B_\eps^{1/2} \psi(s) \, ds \right)
       +  \frac{1}{2} \int_0^1 |\psi(s)|_H^2 \, ds
    \right),
\end{equation}
where $\mathcal{R}$ may be taken to be either $\mathcal{A}$ or $\mathcal{A}_b$.
\end{Theorem}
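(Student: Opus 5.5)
The plan is to reduce Theorem~\ref{varrep-thm} to the standard Bou\'e--Dupuis representation for a cylindrical Wiener process with identity covariance, as in \cite{BudhirajaDupuis2019}.

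\emph{Reduction to the identity-covariance case.} Using the expansion \eqref{weps-expansion}, introduce the identity-covariance cylindrical Wiener process
\[
W(t)=\sum_{k=1}^\infty e_k\,\beta_k(t),
\]
viewed as a process in a larger Hilbert space $K\supset H$ with Hilbert--Schmidt embedding. Then $w^\eps=B_\eps^{1/2}W$, where $B_\eps^{1/2}$ is extended to the appropriate completion; equivalently, $w^\eps$ is the image of the sequence $(\beta_k)_{k}$ under a fixed measurable map. Hence $G(w^\eps)=\tilde G(W)$ for a bounded measurable functional $\tilde G$ on the path space of $W$, namely $C([0,1]:K)$ or $C([0,1]:\R^\infty)$. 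The measurable selection has to be made so that $\tilde G$ is defined on the whole path space; this is done by setting $\tilde G=0$ off the full-measure set where the series defining $w^\eps$ converges.

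\emph{Apply the known representation and transport the shift.} The Hilbert-space version of the Bou\'e--Dupuis formula \cite{BudhirajaDupuis2000,BudhirajaDupuis2019} gives
\[
-\log\E e^{-\tilde G(W)}=\inf_{\psi}\E\Big(\tilde G\Big(W+\int_0^\cdot\psi\,ds\Big)+\tfrac12\int_0^1|\psi|_H^2\,ds\Big),
\]
with the infimum over either $\mathcal A$ or $\mathcal A_b$, and with $\psi$ progressively measurable with respect to the filtration $(\mathcal F_t)$. The independence-of-increments assumption guarantees that $W$ is an $(\mathcal F_t)$-Wiener process, so this applies. Next use linearity:
\[
B_\eps^{1/2}\Big(W+\int_0^\cdot\psi\,ds\Big)=w^\eps+\int_0^\cdot B_\eps^{1/2}\psi\,ds .
\]
Since $B_\eps^{1/2}$ is bounded, the control term lies in $C([0,1]:H)$, and so $\tilde G$ of the shifted process equals $G\big(w^\eps+\int_0^\cdot B_\eps^{1/2}\psi\,ds\big)$ almost surely. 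To justify this, observe that by Girsanov, under the change of measure with density $\exp\big(\int\psi\,dW-\tfrac12\int|\psi|^2\big)$ the shifted process is again a Wiener process. It is therefore equally supported on the good set where the series converges, so the convention defining $\tilde G$ off that set does not matter. For $\psi\in\mathcal A_b$ this is exact. For general $\psi\in\mathcal A$ it follows either by the localization already built into the representation, or by observing that the infimum over $\mathcal A$ is not smaller than the infimum over $\mathcal A_b$, using the lower-bound direction, which follows from the entropy/Donsker--Varadhan variational formula applied to the law of the controlled process.

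\emph{Expected main obstacle.} The delicate point is the measurability and support issue: $w^\eps$ may not be $H$-valued, since $B_\eps$ need not be trace class, while $G$ is only defined on $C([0,1]:H)$. So one must check, or interpret, that $w^\eps$ has paths in $C([0,1]:H)$ in the sense intended, or else understand $G$ as a functional of the driving noise. I would handle this by reading Theorem~\ref{varrep-thm} as a statement about functionals of $(\beta_k)_{k}$, which is precisely how it is used later, where $G(w^\eps)$ is a functional of the solution $v^\eps$. The representation then follows directly from \cite[Theorem~3.2 / 8.3]{BudhirajaDupuis2019}, with the shift computed as above.
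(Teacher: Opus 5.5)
Your proposal is correct and follows essentially the paper's route. The paper proves this statement simply by invoking the Hilbert-space Bou\'e--Dupuis formula of Budhiraja--Dupuis (Theorem~8.3 of their monograph, going back to their 2000 paper). Your reduction to the identity-covariance process $W$ through the linear map $B_\eps^{1/2}$, and your remark that $G$ must really be read on $C([0,1]:K)$ (as the paper itself does later via $\mathcal{G}^\eps$), only make explicit what that citation leaves implicit.

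The Girsanov/support step is not needed. The set of paths on which the series $\sum_k B_\eps^{1/2}e_k\,x_k(\cdot)$ converges is invariant under shifts by $\int_0^\cdot\psi\,ds$ with $\psi\in L^2(0,1;H)$. Hence the shift identity holds almost surely for every $\psi\in\mathcal{A}$, not only for $\psi\in\mathcal{A}_b$.
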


\begin{proof}
The representation is the Hilbert space valued version of the Bou\'e-Dupuis
formula for finite dimensional Brownian motions; see \cite[Theorem~8.3]{BudhirajaDupuis2019} (cf.\ also
\cite[Theorem~3.6]{BudhirajaDupuis2000} for the original Hilbert space extension).
\end{proof}
 
\medskip
 
We now describe how Theorem~\ref{varrep-thm} will be applied in the sequel. Recall
the rescaled process $v^\eps$ from \eqref{eq1-main} considered on the time interval $[0,1]$,
and the associated empirical measure
\[
  \nu^\eps  =  \int_0^1 \delta_{v^\eps(t)} \, dt  \in  \mathcal{P}(H).
\]
Under Hypotheses~\ref{H1} and~\ref{H2}, equation~\eqref{eq1-main}
admits a unique mild solution $v^\eps \in L^2(\Omega; C([0,1];H))$, and the map
$w^\eps \mapsto v^\eps$ is a measurable function from $C([0,1] ; K)$ into
$C([0,1] ; H)$ (where $K$ denotes a sufficiently large extension of $H$ in which
$w^\eps$ takes continuous paths). Consequently, there exists a measurable map
$
  \mathcal{G}^\eps : C([0,1] ; K)  \longrightarrow  \mathcal{P}(H)$
such that
\[
  \nu^\eps  =  \mathcal{G}^\eps(w^\eps), \qquad \mathbb{P}\text{-a.s.}
\]
 
Fix $F \in C_b(\mathcal{P}(H))$ and apply Theorem~\ref{varrep-thm} with
\[G = G^\eps \coloneqq \frac{1}{\eps s^2(\eps)} \, F \circ \mathcal{G}^\eps.\]
 Multiplying both
sides of~\eqref{varrep} by $\eps s^2(\eps)$ and using the identity
\[
  \frac{1}{2} \, \eps s^2(\eps) \int_0^1 | \psi(s) |_H^2 \, ds
   =  \frac{1}{2} \int_0^1 \bigl| \sqrt{\eps}\, s(\eps) \, \psi(s) \bigr|_H^2 \, ds,
\]
together with the substitution $\varphi(s) = \sqrt{\eps}\, s(\eps) \, \psi(s)$, one obtains the
following representation, which will be the basis for the proof of both the Laplace
upper and lower bounds in Sections~\ref{upperbound} and~\ref{lowerbound} below
\begin{equation}\label{varrep-scaled}
 \begin{array}{l}\ds{ -\eps s^2(\eps) \log \mathbb{E} \exp \left ( - \frac{1}{\epsilon s^2(\epsilon)}F(\nu^\epsilon)\right)
  }\\[14pt]
  \ds{\quad \quad \quad =
  \inf_{\varphi \in \mathcal{R}} \mathbb{E}\left(
    \frac{1}{2} \int_0^1 | \varphi(s) |_H^2 \, ds
     +  F\Big( \mathcal{G}^\eps
     \Big(
      w^\eps + \frac{1}{\sqrt{\eps}\, s(\eps)} \int_0^{\,\cdot\,} B_\eps^{1/2} \varphi(s) \, ds
    \Big) \Big)
  \right).}\end{array}
\end{equation}
 
By a standard application of Girsanov's theorem,
the controlled process on the right side appearing inside $F$ in~\eqref{varrep-scaled} can
be identified, almost surely, with the empirical measure
\[
  \bar\nu^\eps  =  \int_0^1 \delta_{\bar v^\eps(t)} \, dt,
\]
where $\bar v^\eps$ solves the controlled equation
\begin{equation}\label{veps-controlled}
  d \bar v^\eps(t)  =  \frac{1}{\eps}\bigl( A \bar v^\eps(t) + Q \nabla U(\bar v^\eps(t)) \bigr) dt
     +  \frac{1}{\eps} B_\eps^{1/2} \varphi(t) \, dt
     +  \frac{s(\eps)}{\sqrt{\eps}} \, dw^\eps(t),
  \qquad \bar v^\eps(0) = u_0,
\end{equation}
on $[0,1]$. The representation~\eqref{varrep-scaled} then reduces the problem of
establishing the Laplace principle for $(\nu^\eps )_{\e \in\,(0,1)}$ to the asymptotic analysis, as
$\eps\downarrow 0$, of the cost functional
\[
 \inf_{\varphi \in \mathcal{R}}   \mathbb{E}\!\left(
    \frac{1}{2} \int_0^1 | \varphi(s) |_H^2 \, ds + F(\bar\nu^\eps)
  \right),
\]
where, for each admissible $\varphi \in \mathcal{R}$, $\bar\nu^\eps$ is the empirical measure
associated with the controlled equation~\eqref{veps-controlled} driven by $\varphi$.  This program is carried
out in Sections~\ref{upperbound} and~\ref{lowerbound}.

\section{Tightness of occupation measures}
\label{a-priori}

We introduce an auxiliary Hilbert
space $V_\delta$, depending on a parameter $\delta > 0$, into which $H$
embeds compactly. For every $\delta > 0$, the operator
$(-A)^{-\delta}$ is well defined on $H$ via the spectral
calculus,
\[
  (-A)^{-\delta} x  =  \sum_{k=1}^\infty \alpha_k^{-\delta}\, \langle x, e_k\rangle_H\, e_k,
  \qquad x \in H,
\]
and is a compact, self-adjoint, strictly positive operator on $H$.

\begin{Definition}\label{Ubeta}
For every $\delta > 0$, let $V_\delta$ denote the completion of $H$
with respect to the inner product
\[
  \langle x, y\rangle_{V_\delta}  \coloneqq  \bigl\langle (-A)^{-\delta} x,\, (-A)^{-\delta} y \bigr\rangle_H,
  \qquad x, y \in H,
\]
and corresponding norm $|x|_{V_\delta} = |(-A)^{-\delta} x|_H$.
\end{Definition}

Equivalently, working in the eigenbasis $(e_k )_{k \in\,\mathbb{N}}$,
\[
  V_\delta  =  \Bigl\{ x  =  \sum_{k=1}^\infty x_k e_k  : 
    |x|_{V_\delta}^2  \coloneqq  \sum_{k=1}^\infty \alpha_k^{-2\delta} x_k^2  <  \infty \Bigr\},
\]
so that $V_\delta$ plays the role of a ``negative-order space'' attached
intrinsically to the operator $A$, with no reference to a Sobolev scale
or to boundary conditions. The collection $(V_\delta )_{\delta > 0}$ forms
a  scale of separable Hilbert spaces with $H \subset V_\delta$
for every $\delta > 0$, and $V_{\delta_1} \subset V_{\delta_2}$ whenever
$\delta_1 \leq \delta_2$.

It follows from classical arguments that
for every $\delta > 0$, $V_\delta$ is a separable Hilbert space, and the inclusion $H \hookrightarrow V_\delta$ is
continuous, with
\[
  |x|_{V_\delta}  \leq  \alpha_1^{-\delta}\, |x|_H, \qquad x \in H.
\]
Moreover, { since $\alpha_k \to \infty$}, the inclusion $H \hookrightarrow V_\delta$ is compact.
Finally, for every $h \in D((-A)^\delta)$, { the map
\[
  z \mapsto \langle z,h\rangle_H, \qquad z\in H,
\]
extends uniquely to a continuous linear functional on $V_\delta$,
(still denoted by $\langle z,h\rangle_H$) and, for every
$z \in V_\delta$,
\[
  \langle z,h\rangle_H
  =
  \bigl\langle (-A)^{-\delta} z,\,
  (-A)^\delta h \bigr\rangle_H .
\]
}


\begin{Remark}\label{role-of-Ubeta}
{\em \emph{1.} The space $V_\delta$ plays a role analogous to the negative-order
Sobolev space $H^{-\delta}$ that arises in concrete reaction-diffusion
SPDEs (Example~2 of the introduction). In that setting, with $A$ the
Dirichlet Laplacian on a bounded domain $D \subset \R^d$, a direct
computation identifies $V_\delta$ with the topological dual of $D((-A)^\delta)$
relative to the $L^2(D)$-pivot, which coincides on smooth functions with
the standard $H^{-2\delta}$ Sobolev space on $D$. The abstract definition
in Definition~\ref{Ubeta} avoids any reference to such Sobolev scales,
making the construction available without further structure on $H$.

\smallskip

\emph{2.} We will use $V_\delta$  as the target
space for the control coordinate of the occupation measure: linear
functionals of the form $z \mapsto \langle z, h\rangle_H$, which arise
when one tests It\^o's formula against smooth functions $\eta$ on $H$, are
continuous on $V_\delta$ as soon as $\nabla \eta \in D((-A)^\delta)$. Weak
limit points of the controlled empirical measures in
$\mathcal{P}(H \times V_\delta)$ can therefore be paired against such
functionals to derive the orthogonality identity underlying the rate
function.  {This observation is used to argue the continuity of the map
$(y, \varphi) \mapsto \phi'_R(\eta_n(y))g_n(y,\phi)$ on $H\times V_{\delta}$ that appears in the proof of  Lemma \ref{orthogonality}.}}
\end{Remark}

\subsection{A-priori bounds}

We start from the following uniform bounds for the controlled equation~\eqref{veps-controlled}. 
\begin{Lemma}\label{moment-bound}
Let Hypotheses~\ref{H1} and~\ref{H2} hold, and let
$(\varphi^\eps )_{\eps \in (0,1)} \subset \mathcal{A}_{b,M}$ be a family of
controls.
Let $\bar v^\eps$ denote the unique mild solution of the controlled
equation~\eqref{veps-controlled} corresponding to the control $\varphi^\eps$.
Then there exists some $c>0$ such that 
\begin{equation}
\label{moment-H}
\eps\, \mathbb{E} \sup_{t \in\,[0,1]}  \bigl| \bar v^\eps(t) \bigr|_H^2+\mathbb{E} \int_0^1 \bigl| \bar v^\eps(t) \bigr|_H^2 \, dt\leq c,\ \ \ \ \ \ \ \e \in\,(0,1).
\end{equation}
Moreover, there exists $\beta \in (0, \alpha)$ such that
\begin{equation}\label{moment-fractional}
  \sup_{\eps \in (0,1)} \mathbb{E} \int_0^1
    \bigl| (-A)^\beta \bar v^\eps(t) \bigr|_H^2 \, dt
   <  \infty.
\end{equation}
\end{Lemma}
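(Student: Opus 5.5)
The plan is to split $\bar v^\eps = z^\eps + y^\eps$, where $z^\eps$ is the rescaled stochastic convolution
\[
 z^\eps(t) = \frac{s(\eps)}{\sqrt\eps}\int_0^t e^{(t-r)A/\eps}\,dw^\eps(r),
\]
and $y^\eps$ solves the random equation
\[
 \frac{d}{dt} y^\eps = \frac1\eps\bigl(Ay^\eps + Q\nabla U(y^\eps+z^\eps) + B_\eps^{1/2}\varphi^\eps\bigr),\qquad y^\eps(0)=u_0 .
\]

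\textbf{Step 1: estimates for $z^\eps$.} By the time change $r\mapsto r/\eps$ and the Itô isometry,
\[
 \mathbb{E}|z^\eps(t)|_H^2 = s^2(\eps)\int_0^{t/\eps}\mathrm{tr}\bigl(e^{rA}B_\eps e^{rA}\bigr)\,dr .
\]
This is bounded uniformly in $t$ and $\eps$ by \eqref{as1}, since $(r\wedge1)^{-2\alpha}\ge1$. Hence $\sup_t\mathbb{E}|z^\eps(t)|^2\le c$ and $\mathbb{E}\int_0^1|z^\eps|^2\le c$.

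For the supremum bound I will use the factorization method with exponent $\alpha$. Write
\[
 z^\eps(t)=\frac{\sin\pi\alpha}{\pi}\int_0^t (t-r)^{\alpha-1}e^{(t-r)A/\eps}Y_\alpha(r)\,dr,
\]
where $Y_\alpha$ is the corresponding stochastic convolution with kernel $(r-\sigma)^{-\alpha}$. After rescaling time by $\eps$, $\mathbb{E}|Y_\alpha(r)|^2$ is exactly controlled by the weighted integral in \eqref{as1}, up to a factor $\eps^{-2\alpha}$. Hölder's inequality then gives $\mathbb{E}\sup_t|z^\eps|^2\le c\,\eps^{-1}$ or better. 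A more careful route is to split $[0,1]$ into $1/\eps$ intervals of length $\eps$, each of which has stationary-like contribution; a union/Gaussian-moment bound then yields $\eps\,\mathbb{E}\sup|z^\eps|^2\le c$, which is all that is needed.

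For the fractional norm, note that $(-A)^\beta e^{rA}$ gives $\mathrm{tr}((-A)^{2\beta}e^{2rA}B_\eps)\le c\,r^{-2\beta}\,\mathrm{tr}(e^{rA}B_\eps e^{rA})$ for $r\le1$, using the commutation of $A$ and $B_\eps$. For $r\ge1$ we have $\mathrm{tr}((-A)^{2\beta}e^{2rA}B_\eps)\le c\,\mathrm{tr}(e^{rA}B_\eps e^{rA})$, because $\alpha_k^{2\beta}e^{-\alpha_k r}$ is bounded. Taking $\beta<\alpha$ (indeed $\beta\le\alpha$ suffices), \eqref{as1} gives $\sup_{t,\eps}\mathbb{E}|(-A)^\beta z^\eps(t)|^2<\infty$.

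\textbf{Step 2: energy estimate for $y^\eps$.} Pair the equation with $y^\eps$ and use $\langle Ay,y\rangle\le-\lambda|y|^2$, $\nabla U(0)=0$, and the Lipschitz bound with constant $[\nabla U]_{\rm Lip}\|Q\|<\lambda$. Together with $\|B_\eps^{1/2}\|\le c$ (uniform boundedness principle from strong convergence), this gives
\[
 \frac{d}{dt}|y^\eps|^2\le \frac1\eps\Bigl(-\omega'|y^\eps|^2 + c|z^\eps|^2 + c|\varphi^\eps|^2\Bigr),
\]
with $\omega'\in(0,\omega)$, after Young's inequality. Gronwall's lemma gives
\[
 |y^\eps(t)|^2\le e^{-\omega' t/\eps}|u_0|^2+\frac c\eps\int_0^t e^{-\omega'(t-r)/\eps}\bigl(|z^\eps(r)|^2+|\varphi^\eps(r)|^2\bigr)\,dr.
\]
Integrating in $t$ (Young's convolution inequality, with kernel of $L^1$ mass $\le c$) bounds $\int_0^1|y^\eps|^2$ by $c\,\eps|u_0|^2+c\int_0^1(|z^\eps|^2+|\varphi^\eps|^2)$, whose expectation is bounded by Step 1 and $M$. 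For the supremum, bound the convolution by $\frac c\eps\int_0^1(|z^\eps|^2+|\varphi^\eps|^2)$, so that $\eps\,\mathbb{E}\sup|y^\eps|^2\le c$. Combining with Step 1 gives \eqref{moment-H}.

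\textbf{Step 3: fractional bound \eqref{moment-fractional}.} Write the mild form
\[
 y^\eps(t)=e^{tA/\eps}u_0+\frac1\eps\int_0^t e^{(t-r)A/\eps}g^\eps(r)\,dr,\qquad g^\eps=Q\nabla U(y^\eps+z^\eps)+B_\eps^{1/2}\varphi^\eps .
\]
Then $|g^\eps|\le c(|y^\eps|+|z^\eps|+|\varphi^\eps|)$. Using $\|(-A)^\beta e^{\tau A}\|\le c\,\tau^{-\beta}e^{-\lambda\tau/2}$, the kernel $k_\eps(\tau)=\eps^{-1}(\tau/\eps)^{-\beta}e^{-\lambda\tau/(2\eps)}$ has $L^1(0,\infty)$ norm independent of $\eps$ for $\beta<1$. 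Young's inequality then bounds $\int_0^1|(-A)^\beta(y^\eps-e^{tA/\eps}u_0)|^2\,dt$ by $c\int_0^1|g^\eps|^2$, which has bounded expectation by Step 2.

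The initial term is the main delicacy: $\int_0^1|(-A)^\beta e^{tA/\eps}u_0|^2\,dt=\eps\int_0^{1/\eps}|(-A)^\beta e^{sA}u_0|^2\,ds$. This is finite and bounded uniformly in $\eps$ when $2\beta<1$, since $|(-A)^\beta e^{sA}u_0|^2\le c\,s^{-2\beta}|u_0|^2$ is integrable near $0$. With $\beta<\alpha<1/2$, adding Step 1's bound on $z^\eps$ finishes the proof.

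I expect the main obstacle to be the $\eps\,\mathbb{E}\sup_t|z^\eps|^2$ bound: that is where the factorization exponents must be matched against the weight $(s\wedge1)^{-2\alpha}$ in \eqref{as1}.
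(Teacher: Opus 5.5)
Your proposal is correct and follows essentially the paper's argument. The paper also subtracts the stochastic convolution, writing $\bar v^\eps=e^{tA/\eps}u_0+X^\eps+Z^\eps$, so your $y^\eps$ is $e^{tA/\eps}u_0+X^\eps$. It then runs the same energy/Gronwall estimate, uses factorization for $\eps\,\mathbb{E}\sup_t|Z^\eps|_H^2$, and uses semigroup smoothing with a Young-type convolution bound for the fractional norm.

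Two small points need fixing when you write it out:
\begin{enumerate}
\item In the trace bound you must give up half the exponential. The valid inequality is $\mathrm{tr}\bigl((-A)^{2\beta}e^{rA}B_\eps e^{rA}\bigr)\le c\,(r\wedge1)^{-2\beta}\,\mathrm{tr}\bigl(e^{rA/2}B_\eps e^{rA/2}\bigr)$, followed by the change of variables $r\mapsto 2r$. As you wrote it, with $e^{rA}$ on the right, it is false; your own justification via boundedness of $\alpha_k^{2\beta}e^{-\alpha_k r}$ actually yields the half-time version.
\item In the factorization step, H\"older only works with $p>1/\alpha>2$, so $p=2$ is not enough. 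You also need Gaussian moment equivalence for $Y_\alpha$. This gives $\mathbb{E}\sup_t|z^\eps(t)|_H^p\le c_p/\eps$, hence $\eps\,\mathbb{E}\sup_t|z^\eps(t)|_H^2\le c\,\eps^{1-2/p}$. This is exactly the paper's computation, so the vaguer interval-splitting route is unnecessary.
\end{enumerate}
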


\begin{proof}
We recall from Hypothesis~\ref{H2}, the dissipativity constant
\[\omega \coloneqq \lambda - [\nabla U]_{\mathrm{Lip}} \|Q\|_{\mathcal{L}(H)} > 0.\]
The mild solution of
\eqref{veps-controlled} can be written as
\begin{equation}\label{mild-decomposition}
\begin{array}{l}\ds{  \bar v^\eps(t)  = e^{tA/\eps} u_0+ \frac{1}{\eps} \int_0^t e^{(t-s)A/\eps}\, Q\, \nabla U(\bar v^\eps(s)) \, ds}\\[10pt]
\ds{\quad \quad \quad \quad \quad +\frac{1}{\eps} \int_0^t e^{(t-s)A/\eps}\, B_\eps^{1/2}\, \varphi^\eps(s) \, ds+\frac{s(\eps)}{\sqrt{\eps}} \int_0^t e^{(t-s)A/\eps} \, dw^\eps(s)=:\sum_{i=1}^3  \Phi_i^\eps(t)+Z^\eps(t).}\end{array}
\end{equation}

\smallskip

\emph{Step 1. } We first prove \eqref{moment-H}.
We have
\begin{equation}\label{est1}
  \bigl| \Phi_1^\eps(t) \bigr|_H  \leq  e^{-\lambda t/\eps} \, |u_0|_H\leq |u_0|_H,
  \qquad \int_0^1 \bigl|\Phi_1^\eps(t)\bigr|_H^2 \, dt  \leq  \frac{\eps}{2\lambda} |u_0|_H^2.
\end{equation}

For  $\Phi^\e_2(t)$ and $\Phi^\e_3(t)$, if we define $X^\e(t):=\Phi^\e_2(t)+\Phi^\e_3(t)$, it is immediate to check that
\[
  \frac{dX^\eps(t)}{dt}  =  \frac{1}{\eps} A X^\eps(t) + \frac{1}{\eps}\left( Q \nabla U(\bar v^\eps(t))+B_\e^{1/2} \varphi^\e(t)\right), \qquad X^\eps(0) = 0.
\]
Since $\nabla U(0)=0$, due to Hypothesis \ref{H1}(1) and \ref{H2} we have
\[\begin{array}{l}
\ds{  \frac{1}{2} \frac{d}{dt} |X^\eps(t)|_H^2
   \leq  -\frac{\lambda}{\eps} |X^\eps(t)|_H^2
    + \frac 1\epsilon\,\left([\nabla U]_{\mathrm{Lip}} \|Q\|_{\mathcal{L}(H)}
    \, |\bar v^\eps(t)|_H+\|B_\e^{1/2} \|_{\mathcal{L}(H)}\,|\varphi^\e(t)|_H\right)|X^\e(t)|_H}\\[12pt]
    \ds{ \quad \quad \quad  
   \leq -\frac{\omega}\epsilon \,|X^\eps(t)|_H^2+ \frac{c}{\epsilon}\,\left( |\Phi^\e_1(t)|_H+|Z^\e(t)|_H+\|B_\e^{1/2} \|_{\mathcal{L}(H)}\,|\varphi^\e(t)|_H\right)|X^\e(t)|_H}\\[12pt]
  \ds{ \quad \quad \quad \quad \quad \quad  \leq -\frac{\omega}{2\,\e}\,|X^\eps(t)|_H^2 +\frac{c}{\epsilon}\,\left(|\Phi^\e_1(t)|^2_H+|Z^\e(t)|^2_H+\|B_\e^{1/2} \|^2_{\mathcal{L}(H)}\,|\varphi^\e(t)|^2_H\right).}
\end{array}
\]
Since $B_\eps^{1/2}$ converges strongly to $Q^{1/2}$, 
from the uniform
boundedness principle we get  
\begin{equation}\label{CB}\sup_\eps \|B_\eps^{1/2}\|_{\mathcal{L}(H)}=:c_B < \infty.\end{equation}
Therefore,  we get
\[
  \frac{d}{dt} \e\,|X^\eps(t)|_H^2+\omega\,|X^\eps(t)|_H^2
   \leq  c\, \left(|\Phi^\e_1(t)|^2_H+|Z^\e(t)|^2_H+|\varphi^\e(t)|_H^2\right),
\]
and integrating with respect to $t \in\,[0,1]$ we conclude
\begin{equation}
\label{est2}
\sup_{t \in\,[0,1]}\epsilon\,	|\Phi_2^\eps(t)+\Phi_3^\eps(t)|_H^2+\int_0^1 |\Phi_2^\eps(t)+\Phi_3^\eps(t)|_H^2\,dt\leq c\,\int_0^1 \left(|\Phi^\e_1(t)|^2_H+|Z^\e(t)|^2_H\right)\,dt+c M.
\end{equation}

Next, we estimate the stochastic convolution $Z_\e(t)$. Due to the analyticity of $A$, for every $\beta\geq 0$ and $s>0$ we have
\[\text{tr}\big({ (-A)^{2\beta}} e^{s A}B_\epsilon e^{sA}\big)\leq c\, (s\wedge 1)^{-2\beta}\,\text{tr}\big( e^{\,\frac s2 A}B_\epsilon e^{\,\frac s2 A}\big)\]
Thus, if we take $\beta\in\,[0,\alpha)$
\[\begin{array}{l}
 \ds{ \mathbb{E} \bigl|(-A)^{\beta}Z^\eps(t)\bigr|_H^2
   =  \frac{s^2(\eps)}{\eps} \int_0^t
    \mathrm{tr} \big((-A)^{2\beta} e^{sA/\eps}\, B_\eps\, e^{sA/\eps} \big) ds= s^2(\eps) \int_0^{t/\eps} 
    \mathrm{tr}\big((-A)^{2\beta} e^{rA}\, B_\eps\, e^{rA} \big) dr}\\[12pt]
    \ds{\quad \quad \quad \quad \quad \quad \quad \quad \quad \quad \quad \quad \leq c\,s^2(\eps)\int_0^\infty (r\wedge 1)^{-2\alpha}\,\mathrm{tr}\big(e^{rA}\, B_\eps\, e^{rA} \big) dr,}\end{array}
\]
so that, thanks to \eqref{as1}, for every $\beta \in\,[0,\alpha)$ 
\begin{equation}\label{as4}
\sup_{\epsilon\in\,(0,1)}\sup_{t \in\,[0,1]}\mathbb{E} \bigl|(-A)^{\beta}Z^\eps(t)\bigr|_H^2<\infty.	
\end{equation}
Moreover, by a stochastic factorization argument ({see \cite[Section 5.3]{DPZ}}), we have
\[Z^\e(t)=\frac{s(\epsilon)}{\sqrt{\epsilon}}\,\frac{\sin (\pi\alpha)}{\pi}\int_0^t (t-s)^{\alpha-1} e^{(t-s)A/\epsilon} Y_\e(s)\,ds,\]
where
\[Y_\e(s)=\int_0^s (s-r)^{-\alpha} e^{(s-r)A/\epsilon}\,dw^\e(r).\]
Hence, if we assume $p>1/\alpha$, thanks to \eqref{as1} this implies
\[\begin{array}{l}\ds{\mathbb{E}\,\sup_{t \in\,[0,1]}|Z^\e(t)|_H^{p}\leq c_p\, \frac{s^{p}(\epsilon)}{\epsilon^{p/2}} \left(\int_0^1 s^{-(1-\alpha)p/(p-1)}e^{-\frac{\lambda}\epsilon p/(p-1)s}\,ds\right)^{p-1}\int_0^1 \mathbb{E}\,|Y_\e(s)|_H^p\,ds}\\[12pt]
\ds{\quad \quad \quad \leq c_p\,\frac{s^p(\epsilon)}{\epsilon^{\,p/2+1-\alpha p}}
\int_0^1 \mathbb{E}\,|Y_\e(s)|_H^p\,ds}\\[12pt]
\ds{\quad \quad \quad \quad \quad \quad \leq c_p\,\frac{s^p(\epsilon)}{\epsilon^{\,p/2+1-\alpha p}}
\int_0^1 \left(\int_0^s r^{-2\a
}\text{tr}\big( e^{rA/\epsilon}B_\e e^{rA/\epsilon}\big)\,dr\right)^{p/2}\,ds\leq c_p \,{\frac{1}{\epsilon}}}\end{array}\]
{here the second line in the display follows from the inequality $\int_0^1 r^{-a} e^{-cr/\eps} dr \leq \tilde c \eps^{1-a}$
 for $a<1$ and $c>0$, the first inequality on the third line is from It\^{o}'s isometry, and the second inequality on the third line
 is a consequence of the observation  that, from Hypothesis \ref{H1}(4),
 \begin{align*}
	 \int_0^s r^{-2\a
	 }\text{tr}\big( e^{rA/\epsilon}B_\e e^{rA/\epsilon}\big)\,dr \leq \eps^{1-2\alpha} \int_0^{\infty} (u\wedge 1)^{-2\alpha}
	 \text{tr}\big( e^{uA}B_\e e^{uA}\big) du \leq c \eps^{1-2\alpha} s^{-2}(\eps).
	\end{align*}

We conclude
\begin{equation}
\label{as3}
\epsilon\,	\mathbb{E}\,\sup_{t \in\,[0,1]}|Z^\e(t)|_H^2 \leq c_p.
\end{equation}
}
Finally, if we put together \eqref{est1}, \eqref{est2}, \eqref{as4} with $\beta=0$, and \eqref{as3}, we obtain \eqref{moment-H}.

\smallskip

\emph{Step 2.} We prove \eqref{moment-fractional}.
In \eqref{as4}, we have already proved the estimate for the stochastic convolution $Z^\e(t)$. Thus, it remains to prove the estimate for the deterministic part.
For $\Phi_1^\eps$ and $\Phi_3^\eps$, the smoothing property of the analytic
semigroup yields, for any $\beta>0$ and $t \in (0,1)$,
\[
  \bigl| (-A)^\beta \Phi_1^\eps(t) \bigr|^2_H
     \leq  c\, \bigl( t/\eps \bigr)^{-2\beta} e^{-2\lambda t/\epsilon} \, |u_0|^2_H.
\]
Taking $\beta \in (0,1/2)$ and integrating over $t \in [0,1]$, gives 
\begin{equation}
\label{as6}
\int_0^1\bigl| (-A)^\beta \Phi_1^\eps(t) \bigr|^2_H	\,dt\leq c\,\epsilon \,|u_0|_H^2.
\end{equation}

Next, if we define $X^\e(t)$ as in Step 1, we have
\[(-A)^\beta X^\e(t)=\frac 1\e\int_0^t (-A)^\beta e^{(t-s)A/\epsilon}\left(Q \nabla U(\bar{v}^\e(s))+B^{1/2}_\epsilon \varphi^\e(s)\right)\,ds.\]
This, using Hypothesis \ref{H2} and $\nabla U(0)=0$, gives 
\[\begin{array}{l}
\ds{|(-A)^\beta X^\e(t)|_H\leq \frac c{\epsilon}\int_0^t ((t-s)/\epsilon)^{-\beta} e^{-\lambda(t-s)/\e}\left(|\bar{v}^\e(s)|_H+|B^{1/2}_\epsilon \varphi^\e(s)|_H\right)\,ds,}	
\end{array}\]
so that Young's inequality, together with \eqref{moment-H}, imply
\begin{equation}\label{as7}\begin{array}{l}\ds{\mathbb{E}\int_0^1|(-A)^\beta X^\e(t)|^2_H\,dt\leq \frac c{\epsilon^2}\left(\int_0^1 (s/\epsilon)^{-\beta} e^{-\lambda s/\e}\,ds\right)^2\int_0^1\mathbb{E}\,\left(|\bar{v}^\e(s)|^2_H+|B^{1/2}_\epsilon \varphi^\e(s)|^2_H\right)\,ds\leq c}	
\end{array}
\end{equation}
Therefore, if we combine \eqref{as7} together with \eqref{as4} and \eqref{as6}, we obtain \eqref{moment-fractional}.
\end{proof}

\subsection{Tightness of the occupation measures}
Recall that, we work under Hypotheses~\ref{H1} and~\ref{H2}.  Let $\bar v^\eps$ be the
unique mild solution of the controlled equation~\eqref{veps-controlled},
driven by a control $\varphi^\eps \in \mathcal{A}_{b,M}$. Recall from Lemma~\ref{moment-bound}
the uniform  bound
\begin{equation}\label{apriori-frac}
 \sup_{\eps \in (0,1)} \E \int_0^1 |\bar v^\eps(t)|_H^2 \, dt+ \sup_{\eps \in (0,1)} \E \int_0^1
    \bigl|(-A)^\beta \bar v^\eps(t)\bigr|_H^2 \, dt  <  \infty,
\end{equation}
for some $\beta \in (0,\alpha)$.

Throughout this subsection, $\delta > 0$ is fixed. The product space
$H \times V_\delta$, equipped with the strong topology of each factor,
is a separable metric space.

\begin{Definition}\label{occ-meas}
For every $\eps \in (0,1)$, define the random element
$\bar \Lambda^\eps \in \mathcal{P}(H \times V_\delta)$ by
\begin{equation}\label{Lambda-def}
  \bar \Lambda^\eps(C)  \coloneqq 
  \int_0^1 \mathbf{1}_C\bigl( \bar v^\eps(s),\, \varphi^\eps(s) \bigr) \, ds,
  \qquad C \in \mathcal{B}(H \times V_\delta).
\end{equation}
\end{Definition}

Equivalently, $\bar \Lambda^\eps$ is the joint occupation measure on $[0,1]$
of the pair $(\bar v^\eps, \varphi^\eps)$, where the second
coordinate is viewed as a $V_\delta$-valued process via the inclusion
$H \hookrightarrow V_\delta$.
Disintegrating along the first coordinate
yields
\[
  \bar \Lambda^\eps(dy\  dz)  =  \hat \Lambda^\eps(dy)\, \lambda^\eps(y, dz),
\]
where $\hat \Lambda^\eps \in \mathcal{P}(H)$ is the marginal law of the state,
\[
  \hat \Lambda^\eps(A)  =  \int_0^1 \mathbf{1}_A(\bar v^\eps(s))\,ds,
  \qquad A \in \mathcal{B}(H),
\]
and $\lambda^\eps(y,\cdot)$ is a regular conditional probability on $V_\delta$
given the state. The marginal $\hat \Lambda^\eps$ is nothing but the
controlled empirical measure $\bar \nu^\eps$ already introduced in
Section~\ref{varformula}.

\begin{Proposition}\label{Q-tightness}
Let Hypotheses~\ref{H1} and~\ref{H2} hold, and let
$(\varphi^\eps )_{\eps \in (0,1)} $ be a
family of controls contained in $ \mathcal{A}_{b,M}$, for some $M>0$. Let
$(\bar \Lambda^\eps)_{\eps \in (0,1)}$ be the corresponding family of
$\mathcal{P}(H \times V_\delta)$-valued random variables defined by
\eqref{Lambda-def} in Definition \ref{occ-meas}. 

Then, the family $(\bar \Lambda^\eps )_{\eps \in (0,1)}$ is tight in
$\mathcal{P}(H \times V_\delta)$, and  if we fix  $\beta \in (0,\alpha)$ as in
\eqref{apriori-frac}, we have
{\begin{equation}\label{Q-fractional-moment}
  \sup_{\eps \in (0,1)} \E \int_{H \times V_\delta}
     \bigl(  |y|_H^2 + |(-A)^{\beta} y|_H^2 + |z|_H^2 \bigr) \, \bar \Lambda^\eps(dy\, dz)
    <  \infty,
\end{equation}}
with the convention that the integrand is $+\infty$ on
{$\{(y,z) \in H\times V_{\delta}: y \notin D((-A)^{\beta}) \mbox{ or } z \notin H\}$.}
\end{Proposition}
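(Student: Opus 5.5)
First I would prove the moment bound \eqref{Q-fractional-moment}, and then deduce tightness from it. By definition of $\bar\Lambda^\eps$,
\[
\int_{H\times V_\delta}\bigl(|y|_H^2+|(-A)^\beta y|_H^2+|z|_H^2\bigr)\,\bar\Lambda^\eps(dy\,dz)=\int_0^1\bigl(|\bar v^\eps(s)|_H^2+|(-A)^\beta\bar v^\eps(s)|_H^2+|\varphi^\eps(s)|_H^2\bigr)\,ds .
\]
Taking expectations, the first two terms are bounded uniformly in $\eps$ by \eqref{apriori-frac}, which is Lemma~\ref{moment-bound}. The third term is at most $M$ almost surely, since $\varphi^\eps\in\mathcal{A}_{b,M}$. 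Along the way this shows that, almost surely, $\bar\Lambda^\eps$ gives zero mass to the exceptional set where the integrand is $+\infty$.

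For tightness I would use the function
\[
\Psi(y,z):=|(-A)^\beta y|_H^2+|z|_H^2 .
\]
Its sublevel sets $K_R:=\{\Psi\le R\}$ are compact in $H\times V_\delta$. Indeed, $\{y:|(-A)^\beta y|_H\le \sqrt R\}$ is compact in $H$ because $(-A)^{-\beta}$ is compact ($\alpha_k\to\infty$). Likewise $\{z\in H:|z|_H\le\sqrt R\}$ is relatively compact in $V_\delta$ by the compact embedding $H\hookrightarrow V_\delta$. It is also closed in $V_\delta$: if $z_n\to z$ in $V_\delta$ with $|z_n|_H\le\sqrt R$, a subsequence converges weakly in $H$, and by uniqueness of limits in $V_\delta$ we get $z\in H$ with $|z|_H\le\sqrt R$. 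The first ball is closed in $H$ by lower semicontinuity of $|(-A)^\beta\cdot|_H$ along coordinates. Hence $K_R$ is compact.

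Next, $G(\mu):=\int\Psi\,d\mu$ is a tightness function on $\mathcal{P}(H\times V_\delta)$. Its sublevel sets $\{\mu: G(\mu)\le L\}$ are tight by Chebyshev, since $\mu(K_R^c)\le L/R$. They are also closed, because $\Psi$ is lower semicontinuous (a supremum of continuous truncations), so $G$ is lower semicontinuous. By Prokhorov they are therefore relatively compact, and in fact compact. Then Markov's inequality gives
\[
\P\bigl(G(\bar\Lambda^\eps)>L\bigr)\le \frac{\sup_\eps\E\, G(\bar\Lambda^\eps)}{L}\le \frac{C}{L},
\]
which is the tightness of the laws of $\bar\Lambda^\eps$ (as in \cite[Lemma~2.9]{BudhirajaDupuis2019}).

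The only slightly delicate point is the compactness and lower semicontinuity in $V_\delta$ of the $H$-balls. This is exactly where the $V_\delta$ topology on the control coordinate is needed, since bounded sets of $H$ are not compact in $H$ itself.
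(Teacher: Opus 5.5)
Your proof is correct and follows essentially the same route as the paper. The moment bound comes from rewriting the integral against $\bar\Lambda^\eps$ as a time integral, applying Lemma~\ref{moment-bound}, and using the bound $M$ on the control cost; tightness rests on the same compact sets (the $(-A)^\beta$-balls in $H$ and the $H$-balls in $V_\delta$) combined with Markov's inequality. The only difference is the abstract criterion: the paper uses \cite[Theorem~2.11]{BudhirajaDupuis2019} to reduce to relative compactness of the mean measures $\E\,\bar\Lambda^\eps$ and of their two marginals, whereas you work directly with the tightness function $\mu\mapsto\int\bigl(|(-A)^\beta y|_H^2+|z|_H^2\bigr)\,d\mu$ on $\mathcal{P}(H\times V_\delta)$, and you check explicitly that the $H$-balls are closed in $V_\delta$, which the paper only asserts.
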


\begin{proof}
 Let $\bar \lambda^\eps \in \mathcal{P}(H \times V_\delta)$
denote the mean measure of $\bar \Lambda^\eps$
\[
  \bar \lambda^\eps(C)  \coloneqq  \E\, \bar \Lambda^\eps(C) 
  = \int_0^1 \P\bigl(\bigl(\bar v^\eps(s), \varphi^\eps(s)\bigr) \in C\bigr) \,ds,
  \qquad C \in \mathcal{B}(H \times V_\delta).
\]
By Fubini's theorem,
{
\begin{align*}
 \E \int_{H \times V_\delta} \bigl( |y|_H^2+|(-A)^\beta y|_H^2 + |z|_H^2 \bigr) \, \bar \Lambda^\eps(dy\, dz) &=
\E \int_0^1 | \bar v^\eps(s)|_H^2 \, ds \\
&\quad + \E \int_0^1 |(-A)^\beta \bar v^\eps(s)|_H^2 \, ds
        +  \E \int_0^1 |\varphi^\eps(s)|_H^2 \, ds.
\end{align*}
The sum of first two terms on the right is finite,} uniformly in $\eps \in\,(0,1)$, by \eqref{apriori-frac}.
For the third term, since $\varphi^\eps \in \mathcal{A}_{b,M}$, we have
\begin{equation}\label{Beps-control}
  \E \int_0^1 |\varphi^\eps(s)|_H^2 \, ds
   \leq  M.
\end{equation}
This proves 
\eqref{Q-fractional-moment}.

Since $\bar \Lambda^\eps$ is a random probability measure, by
\cite[Theorem~2.11]{BudhirajaDupuis2019} in order to prove the tightness of the family $(\bar \Lambda^\eps)_{\eps \in (0,1)}$ in
$\mathcal{P}(H \times V_\delta)$ it suffices to show that the
deterministic family of mean measures $(\bar \lambda^\eps)_{\epsilon \in\,(0,1)}$ is relatively
compact in $\mathcal{P}(H \times V_\delta)$. In turn, it is enough to
prove tightness of each of the two marginals separately.

For the first marginal $\Pi_1 \bar \lambda^\eps \in \mathcal{P}(H)$, we set
\[
  K_R  \coloneqq  \bigl\{ y \in D((-A)^{\beta})  : 
     |(-A)^{\beta} y|_H  \leq  R \bigr\}, \qquad R \in (0,\infty).
\]
Since $A$ has compact resolvent, the embedding
$D((-A)^{\beta}) \hookrightarrow H$ is compact, and therefore $K_R$ is
a compact subset of $H$, for every $R$. By Markov's inequality together
with~\eqref{Q-fractional-moment},
\[
  \Pi_1 \bar \lambda^\eps\bigl(K_R^c\bigr)
   \leq  \frac{1}{R^2}\, \E \int_0^1 |(-A)^{\beta} \bar v^\eps(s)|_H^2 \, ds
   \leq  \frac{c}{R^2},
\]
for some $c>0$ independent of $\eps$. Sending $R \to \infty$
gives tightness of $(\Pi_1 \bar{\lambda}^\eps)_{\e \in\,(0,1)}$ in $\mathcal{P}(H)$.

For the second marginal $\Pi_2 \bar \lambda^\eps \in \mathcal{P}(V_\delta)$, we set
\[
  B_R  \coloneqq  \{ z \in H  :  |z|_H  \leq  R \},
  \qquad R \in (0,\infty).
\]
$B_R$ is a compact
subset of $V_\delta$, and  bound \eqref{Beps-control} gives
\[
  \Pi_2 \bar \lambda^\eps \bigl(B_R^c\bigr)
   \leq  \frac{1}{R^2}\, \E \int_0^1 |\varphi^\eps(s)|_H^2 \, ds
   \leq  \frac{M}{R^2},
\]
so that $(\Pi_2 \bar \lambda^\eps)_{\e \in\,(0,1)}$ is tight in $\mathcal{P}(V_\delta)$.
Tightness of both marginals implies tightness of the joint family
$(\bar \lambda^\eps)_{\e \in\,(0,1)}$ in $\mathcal{P}(H \times V_\delta)$, completing the proof
of our lemma.
\end{proof}

\section{Laplace principle upper bound}
\label{upperbound}

The aim of this section is to establish the Laplace upper bound for the
family of empirical measures $(\mu^\eps )_{\eps \in (0,1)}$ defined
in~\eqref{as20}, namely the inequality
\begin{equation}\label{LDP-upper-aim}
  \liminf_{\eps \to 0}\,- \eps\, s^2(\eps)\, \log \mathbb{E}\, \exp \Big (
        - \frac{1}{\eps\, s^2(\eps)}F(\mu^\eps)
      \Big)
   \geq 
  \inf_{\gamma \in \mathcal{P}(H)}
    \bigl(\, F(\gamma) \,+\, I(\gamma) \,\bigr),
\end{equation}
for every $F \in C_b(\mathcal{P}(H))$, where $I : \mathcal{P}(H) \to [0,\infty]$
is the rate function defined in~\eqref{action}.

We fix $\delta > 0$. By the definition of infimum, for every $\eps \in (0,1)$ we can
choose a control $\tilde\varphi^\eps \in \mathcal{A}_b$ that is $\delta$-optimal in
the variational representation~\eqref{varrep-scaled}, in the sense that
\begin{equation}\label{delta-optimal}
  -\eps\, s^2(\eps) \log \mathbb{E} \exp\!\Big( -\frac{1}{\epsilon s^2(\epsilon)}F(\nu^\epsilon) \Big)
   \ge  \mathbb{E}\!\left(
    \frac{1}{2} \int_0^1 \bigl| \tilde\varphi^\eps(s) \bigr|_H^2 \, ds
     +  F\bigl(\tilde\nu^\eps\bigr)
  \right)  -  \delta,
\end{equation}
where \[\tilde\nu^\eps = \int_0^1 \delta_{\tilde v^\eps(t)}\,dt,\]
and $\tilde{v}^\eps$ solves the controlled equation~\eqref{veps-controlled}, with $\varphi$ replaced
by $\tilde\varphi^\eps$.

 The next lemma, whose proof we omit, as it  follows from a localization argument identical in structure to the one used in 
\cite[Theorem~3.17]{BudhirajaDupuis2019},
shows that, at the price of further error
$\delta$ in~\eqref{delta-optimal}, one can replace $\tilde\varphi^\eps$ by a
control belonging to $\mathcal{A}_{b,M}$ for some $M \in (0,\infty)$ that is
\emph{independent} of $\eps \in\,(0,1)$.

\begin{Lemma}\label{localization}
There exist $M \in (0,\infty)$, depending only on $\|F\|_\infty$ and $\delta$,
and a family $(\varphi^\eps )_{\eps \in (0,1)} \subset \mathcal{A}_{b,M}$, such
that, for every $\eps \in (0,1)$,
\begin{equation}\label{basic-inequality}
  -\eps\, s^2(\eps) \log \mathbb{E} \exp\!\Big( -\frac{1}{\epsilon s^2(\epsilon)}F(\nu^\epsilon) \Big)
   \ge  \mathbb{E}\!\left(
    \frac{1}{2} \int_0^1 \bigl| \varphi^\eps(s) \bigr|_H^2 \, ds
     +  F\bigl(\bar\nu^\eps\bigr)
  \right)  -  2 \delta,
\end{equation}
where $\bar\nu^\eps = \int_0^1 \delta_{\bar v^\eps(t)}\,dt$ and $\bar v^\eps$ is
the solution of the controlled equation~\eqref{veps-controlled} corresponding
to $\varphi = \varphi^\eps$.
\end{Lemma}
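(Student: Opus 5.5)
We will follow the localization argument of \cite[Theorem~3.17]{BudhirajaDupuis2019}. Write $\|F\|_\infty$ for the sup norm of $F$. The plan is to show that the $\delta$-optimal control $\tilde\varphi^\eps$ from \eqref{delta-optimal} already has a second moment bounded uniformly in $\eps$, and then to truncate it at the first time its cumulative energy exceeds a level $M$. The truncation level $M$ will be chosen so that the event on which truncation occurs costs at most $\delta$.

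\emph{Step 1: a priori cost bound.} The left side of \eqref{delta-optimal} is at most $\|F\|_\infty$, since $F\ge -\|F\|_\infty$ gives $-\eps s^2(\eps)\log \E e^{-F(\nu^\eps)/(\eps s^2(\eps))}\le \|F\|_\infty$. Moreover $F(\tilde\nu^\eps)\ge -\|F\|_\infty$. Hence
\[
\E\,\tfrac12\int_0^1|\tilde\varphi^\eps(s)|_H^2\,ds\le 2\|F\|_\infty+\delta,
\]
uniformly in $\eps\in(0,1)$.

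\emph{Step 2: truncation.} Fix $M>0$ and define the stopping time
\[
\tau^\eps_M:=\inf\Big\{t\in[0,1]:\int_0^t|\tilde\varphi^\eps(s)|_H^2\,ds\ge M\Big\}\wedge 1 .
\]
Set $\varphi^\eps(t):=\tilde\varphi^\eps(t)\mathbf 1_{[0,\tau^\eps_M]}(t)$. This process is progressively measurable and lies in $\mathcal A_{b,M}$. By Markov's inequality,
\[
\P(\tau^\eps_M<1)\le \P\Big(\int_0^1|\tilde\varphi^\eps|_H^2\,ds\ge M\Big)\le \frac{2(2\|F\|_\infty+\delta)}{M}.
\]

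\emph{Step 3: comparison of costs.} Note that $\int_0^1|\varphi^\eps|_H^2\,ds\le\int_0^1|\tilde\varphi^\eps|_H^2\,ds$. By strong uniqueness of mild solutions of \eqref{veps-controlled}, the controlled processes driven by $\varphi^\eps$ and by $\tilde\varphi^\eps$, with the same Wiener process $w^\eps$, coincide on $\{\tau^\eps_M=1\}$. Their occupation measures therefore also coincide there. Consequently
\[
\E F(\bar\nu^\eps)\le \E F(\tilde\nu^\eps)+2\|F\|_\infty\,\P(\tau^\eps_M<1).
\]
Choosing $M\ge 4\|F\|_\infty(2\|F\|_\infty+\delta)/\delta$ makes the last term at most $\delta$. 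This $M$ depends only on $\|F\|_\infty$ and $\delta$. Inserting the two comparisons into \eqref{delta-optimal} yields \eqref{basic-inequality}.

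The only point needing care is Step 3: pathwise identification of the solutions up to $\tau^\eps_M$ for the infinite-dimensional equation \eqref{veps-controlled}. This follows from pathwise uniqueness of the mild solution, which holds because $\nabla U$ is Lipschitz (\cite[Theorem 7.4]{DPZ}) and because the stochastic convolution term is the same for both controls.
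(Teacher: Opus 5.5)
Your proposal is correct and follows the same route as the paper, which omits the proof and defers to the standard localization argument of \cite[Theorem~3.17]{BudhirajaDupuis2019}: an a priori energy bound from $\delta$-optimality, truncation at the stopping time $\tau^\eps_M$, Markov's inequality, and coincidence of the two controlled processes on $\{\tau^\eps_M=1\}$. One small slip in Step 1: the bound $-\eps s^2(\eps)\log\E\exp(-F(\nu^\eps)/(\eps s^2(\eps)))\le\|F\|_\infty$ follows from $F\le\|F\|_\infty$, not from $F\ge-\|F\|_\infty$; the latter is the inequality you correctly use for $F(\tilde\nu^\eps)$.
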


For the remainder of this subsection, $M \in (0,\infty)$ and the family
$(\varphi^\eps)_{\e \in\,(0,1)} \subset \mathcal{A}_{b,M}$ will be fixed as in
Lemma~\ref{localization}. The strategy for proving the Laplace upper
bound~\eqref{laplace-upper} now consists of two steps: first, identify the
weak limit points of the controlled processes and the associated occupation
measures, as $\eps\downarrow 0$; second, use the inequality~\eqref{basic-inequality}
to bound the right-hand side of~\eqref{LDP-upper-aim} from below in terms of
the rate function $I$ evaluated at these weak limits. The orthogonality identity
which makes the rate function emerge from the variational cost is at the heart
of the second step.

\subsection{The orthogonality identity}
\label{as30}

In this subsection we establish the infinite-dimensional analogue of
the orthogonality identity~\cite[Lemma~4.4]{BudhirajaZoubouloglou2024},
which extracts from the weak limit of the occupation measures
$\bar \Lambda^\eps$ the precise form of the rate function. As in the
finite-dimensional case, the identity is obtained by applying It\^o's
formula to a family of test functions and passing to the limit as
$\eps \downarrow 0$. The novelty in our setting is that It\^o's formula
cannot be applied directly to test functions on $H$ involving
unbounded operators such as $A$ or $Q^{-1}$; we circumvent this by
testing against functions that depend on finitely many modes of the
eigenbasis of $A$ and only letting the dimension grow at the end.

Recall the eigenbasis $(e_k )_{k \in \N}$ of $-A$ with corresponding
eigenvalues $0 < \alpha_1 \leq \alpha_2 \leq \cdots$, $\alpha_k \to \infty$,
introduced in Remark \ref{QAcommute}. For every $n \in \N$, $\pi_n$ denotes the orthogonal projection onto the first $n$ modes. By Hypothesis~\ref{H1},
$\pi_n$ commutes with $A$ on $D(A)$, and with $Q$, $Q^{1/2}$, and with
$Q^{-1}$ and $Q^{-1/2}$ on $\pi_n(H)$. In particular, $A\pi_n$ extends
to a bounded operator on $H$ with $\|A\pi_n\|_{\mathcal{L}(H)} \leq \alpha_n$.

\begin{Remark}\label{two-level-as}
{\em Throughout this section and Sections~\ref{stab-lemma}
and~\ref{compactness}, we encounter statements that involve two
nested levels of negligibility, an outer level in $\omega \in \Omega$
with respect to $\mathbb{P}$ (or $\tilde{\mathbb{P}}$ on a Skorokhod
extension space), and an inner level in $y \in H$ with respect to a
random measure $\mu(\omega) \in \mathcal{P}(H)$.  A typical instance
is ``for $\mathbb{P}$-a.e.\ $\omega$, for $\mu(\omega)$-a.e.\ $y$,
the identity $f(\omega,y) = 0$ holds.''

We adopt the following convention. When we write
\[
  f(\omega, y) = 0 \qquad \mu\text{-a.s.},
\]
we mean that there exists a set $\Omega_0 \subset \Omega$ of full
$\mathbb{P}$-measure such that for every $\omega \in \Omega_0$,
$f(\omega, y) = 0$ for $\mu(\omega)$-a.e.\ $y \in H$.  In other
words, the outer and inner null sets are handled simultaneously,
and we suppress explicit reference to $\omega$ in $\mu(\omega)$
whenever the two-level meaning is clear from the context.}
\end{Remark}

\medskip

{ From Proposition \ref{Q-tightness}, $(\bar \Lambda^\eps)_{\eps \in (0,1)}$ is a tight family of
$\mathcal{P}(H \times V_\delta)$-valued random variables and so along a subsequence (which we denote again as $(\eps)$), $\bar \Lambda^\eps \Rightarrow \bar \Lambda$.
Note that this latter  is }
\emph{convergence in distribution} of $\mathcal{P}(H \times
V_\delta)$-valued random variables: the laws of $\bar\Lambda^\eps$
on the Polish space $(\mathcal{P}(H \times V_\delta),
d_{\mathrm{BL}})$ converge weakly to the law of $\bar\Lambda$.
In particular, $\bar\Lambda$ is a \emph{random} element of
$\mathcal{P}(H\times V_\delta)$.

Since $\mathcal{P}(H\times V_\delta)$ is Polish, the
Skorokhod representation theorem provides a probability space
$(\tilde\Omega, \tilde{\mathcal{F}}, \tilde{\mathbb{P}})$ and
$\mathcal{P}(H \times V_\delta)$-valued random variables
$\tilde\Lambda^\eps$, $\tilde\Lambda$ on it such that
\begin{equation}\label{skoro-Lambda}
  \tilde\Lambda^\eps \;\overset{d}{=}\; \bar\Lambda^\eps,
  \qquad
  \tilde\Lambda \;\overset{d}{=}\; \bar\Lambda,
  \qquad
  \lim_{\epsilon\to 0}\tilde\Lambda^\eps = \tilde\Lambda, \quad 
  \quad \tilde{\mathbb{P}}\text{-a.s.\ in }
  \mathcal{P}(H \times V_\delta).
\end{equation}
All arguments below are carried out on
$(\tilde\Omega, \tilde{\mathcal{F}}, \tilde{\mathbb{P}})$.
Since all conclusions are statements about the distribution of
$\bar\Lambda$, at the end they transfer back to the original space, via the identity of
$\tilde\Lambda$ with $\bar\Lambda$ in distribution.

We disintegrate $\tilde\Lambda$ as
\[
  \tilde\Lambda(dy\, d\varphi)
   =  \hat\Lambda(dy)\, \lambda(y, d\varphi),
\]
where $\hat\Lambda \in \mathcal{P}(H)$ is the first marginal and
$\lambda(y,\cdot)$ is a regular conditional probability on $V_\delta$
given the state, defined pathwise for every $\tilde\omega$.
{ The uniform moment bound in \eqref{Q-fractional-moment}, together with Fatou's lemma and using equality in distribution of $\bar \Lambda$ and $\tilde \Lambda$,} gives
\[
  \tilde{\mathbb{E}}\int_{H \times V_\delta} |\varphi|_H^2\,
    \tilde\Lambda(dy\,d\varphi) \;<\; \infty.
\]
By Fubini, for $\tilde{\mathbb{P}}$-a.e.\ $\tilde\omega$, for
$\hat\Lambda(\tilde\omega)$-a.e.\ $y \in H$,
\[\int_{V_\delta}|\varphi|_H^2\,\lambda(y,d\varphi) < \infty.\]
Using the convention of Remark~\ref{two-level-as}, we introduce the conditional
mean
\begin{equation}\label{def-u}
  u(y)  \;\coloneqq\;  \int_{V_\delta} \varphi\, \lambda(y, d\varphi)
  \;\in\; H,\ \ \ \ \ \ \hat\Lambda-\text{a.s.}
\end{equation}

For $y \in D(A)$ with $A y \in \mathrm{Range}(Q^{1/2})$, define
\[
  \Psi(y)  \coloneqq   Q^{-1/2} A y \,+\, Q^{1/2} \nabla U(y).
\]
In particular,  the rate function~\eqref{action} reads
\[
  I(\gamma)
   = \frac{1}{2}\int_H |\Psi(y)|_H^2\, \gamma(dy),
\]
with the
 convention that $|\Psi(y)|_H = +\infty$ when $y \notin D(A)$ or $A y \notin \mathrm{Range}(Q^{1/2})$.

 For every $n \in \N$, and with $y$ as above, we also set, 
\[
  \Psi_n(y) \coloneqq 
    Q^{-1/2} A y \,+\, Q^{1/2} \nabla U(\pi_n y),
\]
so that
\begin{equation*}
  \pi_n \Psi_n(y) = Q^{-1/2} A\,\pi_n y + Q^{1/2}\pi_n \nabla U(\pi_n y).
\end{equation*}
Since $\nabla U$ is Lipschitz with $\nabla U(0)=0$ and $\pi_n y \to y$ in $H$, for every
$y \in H$,  on {$\{\,\Psi \in L^2(\hat\Lambda;H)\} \cap \{y \in L^2(\hat\Lambda;H)\}$},
\begin{equation}\label{eq:dc1}
\lim_{n\to \infty}  \pi_n \Psi_n = \Psi,\ \ \ \ \ \ \ \lim_{n\to \infty}\pi_n \Psi =\Psi
  \qquad \text{in } L^2(\hat\Lambda; H).
\end{equation}
{ Indeed, from the Lipschitz property of $\nabla U$, $\Psi_n(y) \to \Psi(y)$, whenever $|\Psi(y)|_H<\infty$. 
In particular, on $\{\,\Psi \in L^2(\hat\Lambda;H)\,\}$, $\Psi_n(y) \to \Psi(y)$ for $\hat \Lambda$ a.e. $y$, and so 
$\pi_n\Psi_n(y) \to \Psi(y)$, for $\hat \Lambda$ a.e. $y$. Also,
$$|\pi_n\Psi_n(y)|_H \leq |\Psi_n(y)|_H \leq |Q^{1/2}\nabla U(\pi_ny)- Q^{1/2}\nabla U(y)|_H + |\Psi(y)|_H \leq c|y|_H+ |\Psi(y)|_H.
$$
Thus dominated convergence gives the first convergence in \eqref{eq:dc1}. The second convergence is argued similarly.}

\begin{Lemma}\label{orthogonality}
Let Hypotheses~\ref{H1} and~\ref{H2} hold. Then, $\tilde{\mathbb{P}}$-a.s. on the event $\{I(\hat \Lambda) < \infty\}$, the following
orthogonality identity holds
\begin{equation}\label{orth-quadratic}
  \int_H \bigl\langle \pi_n \Psi_n(y),\, \pi_n \Psi(y)\bigr\rangle_H \, \hat \Lambda(dy)
   =
  -\int_H \bigl\langle \pi_n \Psi_n(y),\, \pi_n u(y)\bigr\rangle_H \, \hat \Lambda(dy),
  \qquad n \in \N.
\end{equation}
\end{Lemma}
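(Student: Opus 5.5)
The plan is to apply It\^o's formula to a finite-mode test function whose gradient, paired with the drift of the controlled process, produces the integrand of \eqref{orth-quadratic}. Then multiply by $\eps$ so that every term other than the drift vanishes, and pass to the limit using the Skorokhod representation \eqref{skoro-Lambda}.

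\textbf{Choice of test function.} For fixed $n$, set
\[
\eta_n(y) \coloneqq \tfrac12\bigl\langle Q^{-1}A\pi_n y,\pi_n y\bigr\rangle_H + U(\pi_n y),
\qquad
\nabla\eta_n(y)=Q^{-1}A\pi_n y+\pi_n\nabla U(\pi_n y)=Q^{-1/2}\pi_n\Psi_n(y).
\]
This only involves the bounded operator $Q^{-1}A\pi_n$ on $\pi_n(H)$.

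\textbf{Coercivity and cutoff.} By Hypothesis~\ref{H2} we have $|U(x)|\le \tfrac12[\nabla U]_{\rm Lip}|x|^2$, while $\alpha_k/q_k\ge \lambda/\|Q\|$. Hence $-\eta_n(y)\ge \tfrac{\omega}{2\|Q\|}|\pi_n y|^2$ is coercive on $\pi_n(H)$. I would compose with a smooth cutoff $\phi_R$ with $\phi_R'$ supported in $\{-\eta_n\le R+1\}$ and $\phi_R'\to 1$ pointwise as $R\to\infty$. This makes $\phi_R(\eta_n)$ bounded and $\phi_R'(\eta_n)\nabla\eta_n$ bounded.

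\textbf{Regularity of $U$.} Since $U$ is only $C^{1,1}$, replace $U\circ\pi_n$ on the finite-dimensional space $\pi_n(H)$ by mollifications $U_m$. Their Hessians are bounded by $[\nabla U]_{\rm Lip}$ uniformly in $m$, and $\nabla U_m\to\nabla U\circ\pi_n$ uniformly on $\pi_n(H)$. All estimates below are uniform in $m$, so $m\to\infty$ can be taken at the end (or right after $\eps\to0$).

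\textbf{It\^o's formula and the vanishing terms.} Apply It\^o to $\phi_R(\eta_n(\bar v^\eps(t)))$ on $[0,1]$, using that $\pi_n\bar v^\eps$ solves a finite-dimensional SDE with drift $\eps^{-1}\pi_n\bigl(A\bar v^\eps+Q\nabla U(\bar v^\eps)+B_\eps^{1/2}\varphi^\eps\bigr)$. After multiplying by $\eps$:
\begin{itemize}
\item The boundary term is $O(\eps)$, since $\phi_R$ is bounded.
\item The martingale term is $s(\eps)\sqrt\eps$ times a stochastic integral whose quadratic variation is bounded by $c_{n,R}\,c_B^2$, using \eqref{CB}.
\item The It\^o correction is $\tfrac12 s^2(\eps)$ times $\mathrm{tr}\bigl(\pi_nB_\eps\pi_n D^2[\phi_R\circ\eta_n]\bigr)$, which is bounded by $c_{n,R}$.
\end{itemize}
All three vanish in $L^1$. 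The remaining drift term can be written through the occupation measure:
\[
\int_{H\times V_\delta}\phi_R'(\eta_n(y))\bigl\langle \nabla\eta_n(y),\,A\pi_n y+Q\pi_n\nabla U(y)+B_\eps^{1/2}\pi_n\varphi\bigr\rangle_H\,\bar\Lambda^\eps(dy\,d\varphi),
\]
and it therefore tends to $0$ in probability.

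\textbf{Passage to the limit $\eps\to0$.} Since $\pi_n(H)$ is finite-dimensional and $B_\eps$ commutes with $A$, we get $\|B_\eps^{1/2}\pi_n-Q^{1/2}\pi_n\|_{\mathcal L(H)}\to0$. Also $(y,\varphi)\mapsto \phi_R'(\eta_n(y))\langle \nabla\eta_n(y),Q^{1/2}\pi_n\varphi\rangle$ is continuous on $H\times V_\delta$, because the pairing only involves finitely many modes and so is continuous in the $V_\delta$ norm. This integrand has at most linear growth in $|\varphi|_H$, so the second moment bound \eqref{Q-fractional-moment} gives uniform integrability. Using \eqref{skoro-Lambda} we obtain, $\tilde{\mathbb P}$-a.s.,
\[
\int_{H\times V_\delta}\phi_R'(\eta_n(y))\bigl\langle\nabla\eta_n(y),\,A\pi_n y+Q\pi_n\nabla U(y)+Q^{1/2}\pi_n\varphi\bigr\rangle_H\,\tilde\Lambda(dy\,d\varphi)=0 .
\]
The integrand is linear in $\varphi$, so disintegration replaces $\varphi$ by $u(y)$ from \eqref{def-u}.

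\textbf{Identification and removal of the cutoff.} Note that $A\pi_n y+Q\pi_n\nabla U(y)+Q^{1/2}\pi_n u(y)=Q^{1/2}\bigl(\pi_n\Psi(y)+\pi_n u(y)\bigr)$. Since $\langle\nabla\eta_n(y),Q^{1/2}x\rangle=\langle\pi_n\Psi_n(y),x\rangle$, this gives
\[
\int_H\phi_R'(\eta_n(y))\bigl\langle\pi_n\Psi_n(y),\,\pi_n\Psi(y)+\pi_nu(y)\bigr\rangle_H\,\hat\Lambda(dy)=0.
\]
Finally let $R\to\infty$ on $\{I(\hat\Lambda)<\infty\}$ by dominated convergence. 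The dominating function comes from $|\pi_n\Psi_n(y)|\le c|y|+|\Psi(y)|$, with $\Psi\in L^2(\hat\Lambda)$ there, and from $u,y\in L^2(\hat\Lambda)$ a.s., which holds by Fatou and \eqref{Q-fractional-moment}. This yields \eqref{orth-quadratic}.

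\textbf{Main obstacle.} I expect the hardest step to be the limit $\eps\to0$ in the drift term. It requires the continuity of the integrand on $H\times V_\delta$ together with the uniform integrability in the unbounded control variable, with all null sets handled in the two-level sense of Remark~\ref{two-level-as}. A second point needing care is keeping the It\^o correction bounded uniformly in the mollification parameter $m$, given that $U$ is only $C^{1,1}$.
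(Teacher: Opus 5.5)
Your proposal is correct and follows essentially the same route as the paper's proof: the same finite-mode test function $\eta_n$ with the cutoff $\phi_R$, It\^o's formula multiplied by $\eps$, replacement of $B_\eps^{1/2}$ by $Q^{1/2}$ on the first $n$ modes, the Skorokhod and uniform-integrability passage to the limit, disintegration to bring in $u(y)$, and removal of the cutoff by dominated convergence. The deviations are minor: your explicit mollification of $U\circ\pi_n$ is a slightly cleaner substitute for the paper's It\^o formula for $C^{1,1}$ functions with a.e.\ Hessians, and your bound $|U(x)|\le\frac12[\nabla U]_{\mathrm{Lip}}|x|_H^2$ needs $U(0)=0$, which you may assume after subtracting a constant from $U$.
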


\begin{proof}
We work on the event $\{I(\hat\Lambda) < \infty\}$, where
$\Psi \in L^2(\hat\Lambda;H)$, with
\[
  \int_H |\Psi(y)|_H^2\, \hat\Lambda(dy)
   = 2\, I(\hat\Lambda) < \infty.
\]
{Also, without loss of generality, we assume that $\int_H |y|_H^2\, \hat\Lambda(dy)<\infty$ on this event, since from Proposition \ref{Q-tightness} and Fatou's lemma, this finiteness holds $\tilde{\mathbb{P}}$ a.s.}

Fix $n \in \N$ and consider the test function
\begin{equation}
  \eta_n(y) \coloneqq \frac12 \langle \pi_n y,\, Q^{-1} A\,\pi_n y\rangle_H
     + U(\pi_n y), \qquad y \in H.
\end{equation}
By Hypothesis~\ref{H1} and Remark~\ref{QAcommute}, the operator $Q^{-1} A$ is
self-adjoint and leaves $\pi_n(H)$ invariant, so $\eta_n$ is continuously
Fr\'echet differentiable with
\begin{equation}\label{ac112}
  \nabla \eta_n(y) = Q^{-1} A\,\pi_n y + \pi_n \nabla U(\pi_n y) 
   \in \pi_n(H).
\end{equation}
Since $\nabla\eta_n$ is Lipschitz on the finite-dimensional
subspace $\pi_n(H)\cong\R^n$, by Rademacher's theorem $D^2\eta_n$
exists a.e.\ on $\pi_n(H)$ { (with respect to Lebesgue measure)}, with
\[
  D^2 \eta_n(y) = \pi_n\bigl( Q^{-1} A + D^2 U(\pi_n y)\bigr) \pi_n,
  \qquad\text{a.e.\ on }\pi_n(H).
\]
In particular, $\nabla \eta_n$ has at most linear growth, while $D^2 \eta_n(y)$ has range
in $\pi_n(H)$, hence rank at most $n$, and is bounded a.e. uniformly in $y$, with
\[\|D^2\eta_n(y)\|_{\mathcal L(H)} \leq \|A\pi_n\|_{\mathcal L(H)}\,
\|Q^{-1}\pi_n\|_{\mathcal L(H)} + [\nabla U]_{\mathrm{Lip}} =: c_n, \; { \mbox{ a.e. on } \pi_n(H).}\]

For every $R>0$, we fix a smooth function $\phi_R:\R\to\R$
 satisfying
\[\phi_R(t)=t,\ \ |t|\leq R,\ \ \ \ \phi_R(t)=(R+1)\,\mathrm{sgn}(t),\ \ |t|\geq R+1,\]
with
\[0\leq \phi_R'(t)\leq 1,\ \ \ |\phi_R''(t)|\leq c,\ \ \ \ t \in\,\mathbb{R}.\]

Define the truncated test function
\[
  \eta_n^R(y) \;\coloneqq\; \phi_R\bigl(\eta_n(y)\bigr),
  \qquad y\in H.
\]
Since $\eta_n$ depends on $y$ only through $\pi_n y$, the same holds
for $\eta_n^R$.  The chain rule gives
\[
  \nabla\eta_n^R(y)
  = \phi_R'\bigl(\eta_n(y)\bigr)\,\nabla\eta_n(y)
\]
and, a.e.\ on $\pi_n(H)$,
\[
  D^2\eta_n^R(y)
  = \phi_R''\bigl(\eta_n(y)\bigr)\,
    \nabla\eta_n(y)\otimes\nabla\eta_n(y)
  + \phi_R'\bigl(\eta_n(y)\bigr)\,D^2\eta_n(y).
\]
Notice that the function $\eta_n^R$, depending on $y$ only through
$\pi_n y \in \pi_n(H)\cong\R^n$, is $C^{1,1}$ on $\R^n$
since $\nabla U$ is Lipschitz; the It\^o formula for $C^{1,1}$
functions on $\R^n$ holds by a standard mollification argument.

Notice that $\nabla\eta_n^R$ is bounded on $H$ and $D^2\eta_n^R$ is bounded a.e., both  supported in
$A_n(R):=\{|\eta_n|\leq R+1\}$. Moreover, we have
\begin{equation}
\label{A_R-bound}	
|\pi_n y|_H\leq c_n(R),\ \ \ \ \ \ y \in\,A_{n}(R).
\end{equation}
To see
this, note that by Hypothesis~\ref{H1}.1,
\[\langle \pi_n y,Q^{-1}A\pi_n y\rangle_H
= -\sum_{k=1}^n (\alpha_k/q_k)|y_k|^2
\leq -(\lambda/\|Q\|_{\mathcal{L}(H)})\,|\pi_n y|_H^2,\]
so, since $\nabla U(0)=0$ and $\nabla U$ is Lipschitz,
\[
  \eta_n(y)
  \;\leq\; U(0) + \frac{1}{2}\bigl([\nabla U]_{\mathrm{Lip}}
    - \lambda/\|Q\|_{\mathcal{L}(H)}\bigr)\,|\pi_n y|_H^2
  \;=\; U(0) - \frac{\omega}{2\|Q\|_{\mathcal{L}(H)}}\,|\pi_n y|_H^2,
\]
and since, from Hypothesis \ref{H2}, $\omega > 0$, this gives \eqref{A_R-bound}.

Applying It\^o's formula to $\eta_n^R(\bar v^\eps)$, multiplying by
$\eps$, and using
$\nabla\eta_n^R(\bar v^\eps)\in\pi_n(H)$ together with
$\pi_n A = A\pi_n$ and  $\pi_n Q = Q\pi_n$, we obtain
\begin{equation}\label{eq:itofor}
\begin{array}{l}
\ds{\int_0^1 \bigl\langle
    \phi_R'\bigl(\eta_n(\bar v^\eps(s))\bigr)\,\nabla\eta_n(\bar v^\eps(s)),\,
    A\pi_n\bar v^\eps(s) + \pi_n Q\,\nabla U(\bar v^\eps(s))
    + \pi_n B_\eps^{1/2}\varphi^\eps(s)
  \bigr\rangle_H\, ds}\\[10pt]
\ds{\quad\quad = \eps\bigl(\eta_n^R(\bar v^\eps(1))
    - \eta_n^R(u_0)\bigr)
  - \frac{s^2(\eps)}{2}\int_0^1
    \mathrm{tr}\bigl(D^2\eta_n^R(\bar v^\eps(s))\,B_\eps\bigr)\,ds
  - \sqrt{\eps}\,s(\eps)\,M_R^\eps,}
\end{array}
\end{equation}
where \[M_R^\eps \coloneqq \int_0^1 \langle\nabla\eta_n^R(\bar
v^\eps(s)),\,dw^\eps(s)\rangle_H.\]
  Since $\eta_n^R$ is bounded, the
boundary term satisfies
\[\E|\eps(\eta_n^R(\bar v^\eps(1))-\eta_n^R(u_0))|\leq
2(R+1)\eps.\]
For the trace term, since $D^2\eta_n^R$ has
range in $\pi_n(H)$ a.e., thanks to ~\eqref{CB} {(and since the law of $\pi_n \bar v^{\eps}(s)$ is absolutely continuous with respect to the Lebesgue measure on $\R^n$), for all $s \in [0,1]$,
\[s^2(\eps)|\mathrm{tr}(D^2\eta_n^R(\bar v^{\eps}(s))\,B_\eps)|
\leq c_n(R)\,n\,c_B^2 s^2(\eps)=O(s^2(\eps)), \; \mbox{a.s.}\]
}  For the martingale term, since $|\nabla\eta_n^R|_H$
is bounded,
\[
  \eps\,s^2(\eps)\,\E\langle M_R^\eps\rangle
  \leq \eps\,s^2(\eps)\,c_B^2\, c_n(R),
\]
so $\sqrt{\eps}\,s(\eps)\,M_R^\eps = o_{L^2(\Omega)}(1)$.  The  above estimates allow us to
conclude that the left-hand side { of \eqref{eq:itofor}} vanishes in
$L^1(\Omega,\mathbb{P})$, as $\eps\downarrow 0$, for each fixed
$R>0$ and $n \in\,\mathbb{N}$.

\smallskip

We now rewrite this left-hand side through the occupation
measure~\eqref{Lambda-def}, denoting
\[
  g_{n,\e}(y,\varphi) \;\coloneqq\;
  \bigl\langle \nabla\eta_n(y),\,
    A\pi_n y + \pi_n Q\,\nabla U(y) + \pi_n B_\eps^{1/2}\varphi
  \bigr\rangle_H,
\]
so that
\[
  \lim_{\epsilon\to 0}\,\int_{H\times V_\delta}
  \phi_R'\bigl(\eta_n(y)\bigr)\,g_{n,\epsilon}(y,\varphi)\,
  \bar\Lambda^\eps(dy\,d\varphi)=0,
  \quad \quad \text{in }L^1(\Omega,\mathbb{P}).
\]
Now, since  $\operatorname{supp}\phi_R'\subseteq [-(R+1), R+1]$ we have  $|\nabla\eta_n(y)|_H\leq c_n(R)$ on $\operatorname{supp}\phi_R'$, so that
 we get
\[
\begin{array}{l}
\ds{  \E\left|\int_{H\times V_\delta}
    \phi_R'\bigl(\eta_n(y)\bigr)\,
    \bigl\langle \nabla\eta_n(y),\,
      \pi_n(B_\eps^{1/2}-Q^{1/2})\varphi
    \bigr\rangle_H\,\bar\Lambda^\eps(dy \,d\varphi)\right|}\\[10pt]
\ds{\quad\quad\quad\quad
  \leq c_n(R)\,\max_{1\leq k\leq n}
|(B_\eps^{1/2}-Q^{1/2})e_k|_H\,
    \Big(\E\!\int_{H\times V_\delta}|\varphi|_H^2\,\bar\Lambda^\eps(dy \,d\varphi)\Big)^{1/2}.}
\end{array}\]
Thus, if we decompose \[\pi_n B_\eps^{1/2}\varphi = \pi_n Q^{1/2}\varphi
+ \pi_n(B_\eps^{1/2}-Q^{1/2})\varphi,\] 
and { denote by $g_n$ the expression defining $g_{n,\epsilon}$, with $Q^{1/2}$ in place of
$B_\eps^{1/2}$},
on recalling that
$B_\eps^{1/2}$ converges strongly to $Q^{1/2}$, in view of \eqref{Beps-control} we have
\begin{equation}\label{gn-trunc-vanish}
  \lim_{\eps\to 0}\,\E\Big|
  \int_{H\times V_\delta}
  \phi_R'\bigl(\eta_n(y)\bigr)\,g_n(y,\varphi)\,
  \bar\Lambda^\eps(dy\,d\varphi)\Big| = 0.
\end{equation}
Transferring~\eqref{gn-trunc-vanish} to the Skorokhod space via
$\tilde\Lambda^\eps\overset{d}{=}\bar\Lambda^\eps$ and extracting a
subsequence (not relabeled), we may assume
$\tilde{\mathbb{P}}$-a.s.\ vanishing.  

The integrand
$\phi_R'(\eta_n(y))\,g_n(y,\varphi)$ is continuous on
$H\times V_\delta$ (cf. Remark \ref{role-of-Ubeta}(2)) and grows at most linearly in
{$(|y|_H,|\varphi|_{H})$}.  Indeed, $\phi_R'(\eta_n(y))$ vanishes
unless {$ y \in A_n(R) = \{|\eta_n(y)|\leq R+1\}$}, which forces $|\pi_n y|_H\leq c_n(R)$ due to \eqref{A_R-bound}.
On $A_n(R)$,
\begin{equation}\label{eq:bdui}\begin{array}{l}
\ds{  \bigl|\phi_R'(\eta_n(y))\,g_n(y,\varphi)\bigr|
  \;\leq\; |\nabla\eta_n(y)|_H\cdot\bigl|
    A\pi_n y + \pi_n Q\,\nabla U(y) + \pi_n Q^{1/2}\varphi\bigr|_H\leq\; c_n(R)\,\bigl({ 1+ |y|_H +|\varphi|_H}\bigr),}
\end{array}
\end{equation}
where in the last term we used $\pi_n Q^{1/2} = Q^{1/2}\pi_n$ { and the Lipschitz property of $\nabla U$}.

We claim that if we denote $h(y,\varphi):=\phi_R'(\eta_n(y))\,g_n(y,\varphi)$, then
\begin{equation}\label{h-L1-convergence}
  \lim_{\eps\to 0}
  \int_{H\times V_\delta} h(y,\varphi)\,d\tilde\Lambda^\eps(dy\,d\varphi)
  = \int_{H\times V_\delta} h(y,\varphi)\,d\tilde\Lambda(dy\,d\varphi),\ \ \ \ \ \text{in}\ \ L^1(\tilde{\Omega}, \tilde{\mathbb{P}}).
\end{equation}
To see this, for each $M > 0$, define the bounded continuous truncation
$h^M \coloneqq (-M)\vee h \wedge M$. Then
\[\begin{array}{l}
\ds{  \tilde{\E}\left|
  \int h\,d\tilde\Lambda^\eps
  - \int h\,d\tilde\Lambda\right|
  \;\leq\;
  \tilde{\E}\left|
  \int h^M d\tilde\Lambda^\eps
  - \int h^M d\tilde\Lambda\right|
  + \tilde{\E}\int |h-h^M|\,d\tilde\Lambda^\eps
  + \tilde{\E}\int |h-h^M|\,d\tilde\Lambda}\\[12pt]
  \ds{\quad \quad \quad \quad \quad \quad \quad \quad \quad \quad \quad \quad =:I_1(\eps,M)+I_2(\eps,M)+I_3(M).}\end{array}
\]
 
Since $h^M$ is bounded continuous and
$\tilde\Lambda^\eps$ weakly converges to $\tilde\Lambda$, 
$\tilde{\mathbb{P}}$-a.s., we have
\[\lim_{\e\to 0}\int h^M\,d\tilde\Lambda^\eps =
\int h^M\,d\tilde\Lambda,\ \ \ \ \ \ \tilde{\mathbb{P}}-\text{a.s.}\] As
$|\int h^M\,d\tilde\Lambda^\eps|\leq M$ for every $\eps$, dominated
convergence on $(\tilde\Omega,\tilde{\mathbb{P}})$ gives
$I_1(\eps,M)\to 0$, as $\eps\to 0$, for each fixed $M$.
 
Next, since $|h-h^M|\leq |h|\,\mathbf{1}_{\{|h|>M\}}$
and $\tilde\Lambda^\eps\overset{d}{=}\bar\Lambda^\eps$,
\[
  I_2(\eps,M)
  \;\leq\; \E\int_{H\times V_\delta}
  |h|\,\mathbf{1}_{\{|h|>M\}}\,d\bar\Lambda^\eps\leq \frac 1M\,\E\int_{H\times V_\delta}
  |h|^2d\bar\Lambda^\eps.
\]
Therefore, in view of Proposition~\ref{Q-tightness} { and the bound in \eqref{eq:bdui}}, we get
\[
 \sup_{\e \in\,(0,1)} I_2(\eps,M)
  \;\leq\;  \frac{c}{M}.
\] 
Finally, by Fatou's lemma
{ \begin{align*}
I_3(M) \leq \liminf_{\eps\to 0} I_2(\eps,M) \leq \sup_{\e \in\,(0,1)} I_2(\eps,M)
  \;\leq\;  \frac{c}{M} \to 0 \mbox{ as  } M \to \infty.
\end{align*}}

 
\smallskip
 
Combining the three estimates, for any $\delta>0$, first choose $M$
large enough that $I_2+I_3<2\delta/3$ (uniformly in $\eps$ for $I_2$),
then choose $\eps$ small enough that $I_1<\delta/3$.  This
establishes~\eqref{h-L1-convergence}.  Extracting a further
subsequence (not relabeled), we obtain
$\int h\,d\tilde\Lambda^\eps\to
\int h\,d\tilde\Lambda$, $\tilde{\mathbb{P}}$-a.s.
Combined with the $\tilde{\mathbb{P}}$-a.s.\ vanishing of
$\int h\,d\tilde\Lambda^\eps$
(from~\eqref{gn-trunc-vanish}), we conclude that
for every fixed $R>0$ and $n \in\,\mathbb{N}$,
\begin{equation}\label{trunc-limit-identity}
  \int_{H\times V_\delta}
  \phi_R'\bigl(\eta_n(y)\bigr)\,g_n(y,\varphi)\,
  \tilde\Lambda(dy\,d\varphi)
  \;=\; 0,
  \qquad\tilde{\mathbb{P}}\text{-a.s.}
\end{equation}

\smallskip

{We will now send $R\to \infty$ in the above display.} Since $\phi_R'\to 1$ pointwise and
$|\phi_R'(\eta_n)\,g_n|\leq |g_n|$, it suffices to show that
$g_n\in L^1(\tilde\Lambda)$, $\tilde{\mathbb{P}}$-a.s., so that
dominated convergence applies.  By Cauchy-Schwarz and \eqref{ac112},
{ \[
  \int_{H\times V_\delta}|g_n|\,d\tilde\Lambda
  \;\leq\; c_n\!\left(\int|y|_H^2\,d\tilde\Lambda\right)^{\!1/2}
  \!\left(\int\bigl(|y|_H^2 +
  |\varphi|_{H}^2\bigr)\,
    d\tilde\Lambda\right)^{\!1/2}\!.
\]
}
Both factors are finite $\tilde{\mathbb{P}}$-a.s. because from Proposition~\ref{Q-tightness} we get
\[
  \int_{H\times V_\delta}
  \bigl(|y|_H^2+|\varphi|_{H}^2\bigr)\,d\tilde\Lambda
  \;\leq\;\liminf_{\eps\to 0}\int_{H\times V_\delta}
  \bigl(|y|_H^2+|\varphi|_{H}^2\bigr)\,d\tilde\Lambda^\eps
  \;<\;\infty,
  \qquad\tilde{\mathbb{P}}\text{-a.s.}
\]
Dominated convergence applied in \eqref{trunc-limit-identity} therefore yields
\[
  0= \lim_{R\to\infty}
  \int_{H\times V_\delta}\phi_R'\bigl(\eta_n(y)\bigr)\,g_n(y,\varphi)\,
  \tilde\Lambda(dy\,d\varphi)
  \;=\;
  \int_{H\times V_\delta}g_n(y,\varphi)\,
  \tilde\Lambda(dy\,d\varphi),
  \qquad\tilde{\mathbb{P}}\text{-a.s.}
\]


Next, disintegrating $\tilde\Lambda(dy\,d\varphi) =
\hat\Lambda(dy)\,\lambda(y,d\varphi)$, the terms
$A\pi_n y$ and $\pi_n Q\,\nabla U(y)$ depend only on $y$ and factor out
of the $\lambda(y,\cdot)$-integral, while
\[\int_{V_\delta}\pi_n Q^{1/2}\varphi\,\lambda(y,d\varphi)
= \pi_n Q^{1/2}u(y)\] by the definition~\eqref{def-u} of $u(y)$.
This yields
\begin{equation}\label{orth-substituted}
  \int_H \bigl\langle \nabla\eta_n(y),\,
     A\,\pi_n y + \pi_n Q\, \nabla U(y) + \pi_n Q^{1/2} u(y)\bigr\rangle_H\,
     \hat\Lambda(dy) = 0,
     \qquad \tilde{\mathbb{P}}\text{-a.s.}
\end{equation}

\smallskip

Finally, using the commutation relations and
$A\,\pi_n y + \pi_n Q\, \nabla U(y) = Q^{1/2}\,\pi_n \Psi(y)$, we get
\[
  \bigl\langle \nabla\eta_n(y),\, Q^{1/2}\,\pi_n \Psi(y)\bigr\rangle_H
   = \bigl\langle Q^{1/2}\nabla\eta_n(y),\, \pi_n \Psi(y)\bigr\rangle_H
   = \bigl\langle \pi_n \Psi_n(y),\, \pi_n \Psi(y)\bigr\rangle_H,
\]
and
\[
  \bigl\langle \pi_n \nabla\eta_n(y),\, \pi_n Q^{1/2} u(y)\bigr\rangle_H
   = \bigl\langle Q^{1/2}\nabla\eta_n(y),\, \pi_n u(y)\bigr\rangle_H
   = \bigl\langle \pi_n \Psi_n(y),\, \pi_n u(y)\bigr\rangle_H.
\]
Substituting these into~\eqref{orth-substituted}
yields~\eqref{orth-quadratic}.
\end{proof}

\begin{Corollary}\label{cor:cost-bound}
Under the assumptions of Lemma~\ref{orthogonality}, the random
limit point $\tilde\Lambda$ satisfies, $\tilde{\mathbb{P}}$-almost surely,
\begin{equation}\label{cost-lower-bound}
  \int_{H \times V_\delta} |\varphi|_H^2\, \tilde\Lambda(dy\, d\varphi)
   \geq  2\, I(\hat \Lambda),
\end{equation}
where $\hat \Lambda$ is the first marginal of $\tilde\Lambda$ and $I$
is the rate function defined in~\eqref{action}.
\end{Corollary}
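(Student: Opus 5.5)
On the event $\{I(\hat\Lambda)=\infty\}$ we will show the left side of \eqref{cost-lower-bound} is also infinite. On $\{I(\hat\Lambda)<\infty\}$ we will combine Jensen's inequality with the orthogonality identity \eqref{orth-quadratic} and then let $n\to\infty$ via \eqref{eq:dc1}.

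\emph{Step 1 (Jensen).} For $\hat\Lambda$-a.e.\ $y$, the conditional mean $u(y)$ in \eqref{def-u} satisfies
\[
|u(y)|_H^2\le\int_{V_\delta}|\varphi|_H^2\,\lambda(y,d\varphi).
\]
Hence
\[
\int_{H\times V_\delta}|\varphi|_H^2\,d\tilde\Lambda\;\ge\;\int_H|u(y)|_H^2\,\hat\Lambda(dy)\;\ge\;\int_H|\pi_n u(y)|_H^2\,\hat\Lambda(dy).
\]

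\emph{Step 2 (finite-cost case).} Work on $\{I(\hat\Lambda)<\infty\}$, where $\Psi\in L^2(\hat\Lambda;H)$. Also $\int|y|^2_H\,d\hat\Lambda<\infty$ a.s., so \eqref{eq:dc1} applies. By Cauchy--Schwarz in $L^2(\hat\Lambda;H)$, \eqref{orth-quadratic} gives
\[
\Bigl|\int\langle\pi_n\Psi_n,\pi_n\Psi\rangle\,d\hat\Lambda\Bigr|\le\|\pi_n\Psi_n\|_{L^2(\hat\Lambda)}\,\|\pi_n u\|_{L^2(\hat\Lambda)}.
\]
Let $n\to\infty$. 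By \eqref{eq:dc1}, $\pi_n\Psi_n\to\Psi$ and $\pi_n\Psi\to\Psi$ in $L^2(\hat\Lambda;H)$. The left side therefore tends to $\|\Psi\|^2_{L^2(\hat\Lambda)}$, and $\|\pi_n\Psi_n\|_{L^2(\hat\Lambda)}\to\|\Psi\|_{L^2(\hat\Lambda)}$. Using $\|\pi_nu\|\le\|u\|$ and dividing by $\|\Psi\|$ (the claim is trivial if $\Psi=0$) gives $\|\Psi\|_{L^2(\hat\Lambda)}\le\|u\|_{L^2(\hat\Lambda)}$. With Step 1 this yields $\int|\varphi|^2_H\,d\tilde\Lambda\ge\|\Psi\|^2_{L^2(\hat\Lambda)}=2I(\hat\Lambda)$.

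\emph{Step 3 (infinite-cost case: main obstacle).} On $\{I(\hat\Lambda)=\infty\}$ we must show that $\int|\varphi|^2_H\,d\tilde\Lambda=\infty$. Lemma~\ref{orthogonality} is stated only for finite $I$, but its proof up to \eqref{orth-substituted} did not use finiteness of $I$. It needs only $\int(|y|^2_H+|\varphi|_H^2)\,d\tilde\Lambda<\infty$, which we may assume, since otherwise the claim is trivial.

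Set $w_n(y):=Q^{1/2}\nabla\eta_n(y)=\pi_n\Psi_n(y)$. This is well defined for all $y\in H$, because $\pi_n$ kills the unbounded part. The relation \eqref{orth-substituted} then reads
\[
\int\langle w_n,\,Q^{-1/2}A\pi_ny+\pi_nQ^{1/2}\nabla U(y)\rangle\,d\hat\Lambda=-\int\langle w_n,\pi_nu\rangle\,d\hat\Lambda,
\]
and $Q^{-1/2}A\pi_n y+\pi_n Q^{1/2}\nabla U(y)=w_n(y)+\pi_nQ^{1/2}(\nabla U(y)-\nabla U(\pi_ny))$. The resulting error term is bounded by $c\,\|w_n\|\,\|y-\pi_ny\|_{L^2(\hat\Lambda)}$. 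We therefore obtain
\[
\|w_n\|\le\|u\|+c\,\|(I-\pi_n)y\|_{L^2(\hat\Lambda)}\le\|u\|+c',
\]
so $\sup_n\|w_n\|_{L^2(\hat\Lambda)}<\infty$.

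On the other hand, $w_n(y)=\pi_nQ^{-1/2}Ay+\pi_nQ^{1/2}\nabla U(\pi_ny)$, whose second term is bounded in $L^2$. Hence $\sup_n\int|\pi_nQ^{-1/2}Ay|^2\,d\hat\Lambda<\infty$, where this is read coordinatewise as $\sum_{k\le n}(\alpha_k^2/q_k)y_k^2$. By monotone convergence, $\sum_k(\alpha_k^2/q_k)y_k^2<\infty$ for $\hat\Lambda$-a.e.\ $y$, which forces $y\in\mathcal D$ and $\Psi\in L^2(\hat\Lambda)$, contradicting $I(\hat\Lambda)=\infty$. Handling this case carefully is the delicate point; the finite case is then just Steps 1 and 2.
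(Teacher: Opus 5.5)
Your argument is correct. For $I(\hat\Lambda)<\infty$ it is the paper's argument: Jensen for $\lambda(y,\cdot)$, then the orthogonality identity \eqref{orth-quadratic}, then $n\to\infty$ via \eqref{eq:dc1}. The only change there is that you use Cauchy--Schwarz where the paper completes the square in $0\le\int|\pi_n u+\pi_n\Psi_n|_H^2\,d\hat\Lambda$; the two are equivalent.

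The real difference is the case $I(\hat\Lambda)=+\infty$, which the paper dismisses with ``there is nothing to prove''. That is a slip. The inequality is trivial only when the \emph{left} side is infinite. But $\tilde{\mathbb{E}}\int|\varphi|_H^2\,d\tilde\Lambda<\infty$, so the left side is a.s.\ finite. Hence \eqref{cost-lower-bound} on $\{I(\hat\Lambda)=\infty\}$ amounts to proving $\tilde{\mathbb{P}}(I(\hat\Lambda)=\infty)=0$. This is genuinely needed in Subsection~\ref{LDP-upper-bound}, where the a.s.\ bound is integrated over $\tilde\Omega$ to get $\tilde{\mathbb{E}}\,I(\hat\Lambda)$.

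Your Step 3 supplies exactly this missing argument.
\begin{itemize}
\item You correctly observe that \eqref{orth-substituted} is derived without using $I(\hat\Lambda)<\infty$; only the a.s.\ moment bounds from Fatou's lemma enter.
\item Rewriting it in terms of $w_n=\pi_n\Psi_n$ gives $\|w_n\|_{L^2(\hat\Lambda)}\le\|u\|_{L^2(\hat\Lambda)}+c\,\|y-\pi_n y\|_{L^2(\hat\Lambda)}$.
\item This yields a uniform bound on $\int\sum_{k\le n}(\alpha_k^2/q_k)y_k^2\,d\hat\Lambda$, and monotone convergence then gives $\hat\Lambda(\mathcal D)=1$ and $\Psi\in L^2(\hat\Lambda;H)$.
\end{itemize}

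This same inequality actually settles both cases at once, so Step 2 is redundant. Note that $|\pi_n\Psi_n(y)|_H\to|\Psi(y)|_H$ in $[0,\infty]$ for every $y$, since off $\mathcal D$ one has $|Q^{-1/2}A\pi_n y|_H\to\infty$. Also $\|y-\pi_n y\|_{L^2(\hat\Lambda)}\to0$ by dominated convergence. Fatou's lemma therefore gives directly
\[
2I(\hat\Lambda)\le\liminf_n\|\pi_n\Psi_n\|_{L^2(\hat\Lambda)}^2\le\|u\|_{L^2(\hat\Lambda)}^2\le\int|\varphi|_H^2\,d\tilde\Lambda .
\]
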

 
\begin{proof}
If $I(\hat \Lambda) = +\infty$, there is nothing to prove. We work on
the event $\{I(\hat \Lambda) < \infty\}$ throughout.
 By Jensen's inequality applied to $\lambda(y,\cdot)$, for every $n
\in \N$, $\hat\Lambda$-a.s.,
\[
  \int_{V_\delta} |\pi_n \varphi|_H^2\, \lambda(y, d\varphi)
   \geq  |\pi_n u(y)|_H^2.
\]
Integrating against $\hat \Lambda$ and using $|\pi_n\varphi|_H \leq
|\varphi|_H$,
\begin{equation}\label{jensen}
  \int_{H \times V_\delta} |\varphi|_H^2\, \tilde\Lambda(dy\, d\varphi)
   \geq  \int_H |\pi_n u(y)|_H^2\, \hat \Lambda(dy).
\end{equation}

Now, completing the square against $\pi_n \Psi_n$ and using the orthogonality
identity~\eqref{orth-quadratic} to evaluate the cross term,
\[\begin{array}{l}
\ds{	
  0 \leq \int_H |\pi_n u(y) + \pi_n \Psi_n(y)|_H^2\, \hat\Lambda(dy)}\\[10pt]
  \ds{\quad \quad\quad 
   = \int_H |\pi_n u(y)|_H^2\, \hat\Lambda(dy)
     - 2\int_H \langle \pi_n \Psi_n(y),\, \pi_n \Psi(y)\rangle_H\, \hat\Lambda(dy)
     + \int_H |\pi_n \Psi_n(y)|_H^2\, \hat\Lambda(dy),}
\end{array}
\]
so that, combining with~\eqref{jensen},
\[\begin{array}{l}
\ds{	
  \int_{H \times V_\delta} |\varphi|_H^2\, \tilde\Lambda(dy\, d\varphi)
   \geq \int_H |\pi_n u(y)|_H^2\, \hat\Lambda(dy)}\\[10pt]
  \ds{\quad \quad \quad \quad  \geq 2\int_H \langle \pi_n \Psi_n(y),\, \pi_n \Psi(y)\rangle_H\, \hat\Lambda(dy)
        - \int_H |\pi_n \Psi_n(y)|_H^2\, \hat\Lambda(dy).}
\end{array}
\]
Letting $n \to \infty$ and {using 
\eqref{eq:dc1},}
the right-hand side converges to
\[2\int_H |\Psi|_H^2\,\hat\Lambda - \int_H |\Psi|_H^2\,\hat\Lambda
= \int_H |\Psi(y)|_H^2\, \hat\Lambda(dy) = 2\, I(\hat\Lambda),\] and then
\[
  \int_{H \times V_\delta} |\varphi|_H^2\, \tilde\Lambda(dy\, d\varphi)
   \geq 2\, I(\hat\Lambda),
\]
which establishes~\eqref{cost-lower-bound} $\tilde{\mathbb{P}}$-a.s.
\end{proof}

\subsection{Proof of the LDP upper bound}\label{LDP-upper-bound}

We now combine the moment estimates, tightness, and orthogonality
identity established above to conclude the LDP upper bound,
namely~\eqref{LDP-upper-aim}.

Fix $F \in C_b(\mathcal{P}(H))$ and $\delta > 0$. From the variational
representation  of Theorem~\ref{varrep-thm} and the localization argument
of Lemma~\ref{localization}, there exist $M \in (0, \infty)$ and
controls $\varphi^\eps \in \mathcal{A}_{b,M}$ such that
\begin{equation}\label{upper-start}
   -\eps\, s^2(\eps)\, \log \mathbb{E} \exp\Big (-\frac{1}{\eps\, s^2(\eps)}F(\mu^\eps) \Big)
    \geq
   \mathbb{E}\!\left(
       F(\bar \mu^\eps) \,+\, \frac{1}{2} \int_0^1 |\varphi^\eps(s)|_H^2\, ds
   \right)
   \,-\, 2 \delta,
\end{equation}
for every $\eps \in (0,1)$, where $\bar \mu^\eps$ is the empirical
measure of the controlled process $\bar v^\eps$ driven by $\varphi^\eps$
via~\eqref{veps-controlled}. Rewriting the cost in terms of the
occupation measure $\bar \Lambda^\eps$,
\[
  \frac{1}{2} \int_0^1 |\varphi^\eps(s)|_H^2\, ds
   =  \frac{1}{2} \int_{H \times V_\delta} |\varphi|_H^2\,
        \bar \Lambda^\eps(dy\, d\varphi),
\]
so that~\eqref{upper-start} becomes
\begin{equation}\label{upper-as-occ}
  -\eps\, s^2(\eps)\, \log \mathbb{E} \exp\Big (-\frac{1}{\eps\, s^2(\eps)}F(\mu^\eps) \Big)
    \geq
   \mathbb{E}\!\left(
       F\bigl( [\bar \Lambda^\eps]_1 \bigr)
       \,+\, \frac{1}{2} \int_{H \times V_\delta} |\varphi|_H^2\,
         \bar \Lambda^\eps(dy\, d\varphi)
   \right)
   \,-\, 2 \delta.
\end{equation}

{Recall from Proposition \ref{Q-tightness}, $(\bar \Lambda^\eps)_{\eps \in (0,1)}$ is a tight family of
$\mathcal{P}(H \times V_\delta)$-valued random variables and so along a subsequence (which we denote again as $(\eps)$), $\bar \Lambda^\eps \Rightarrow \bar \Lambda$.
By a standard subsequential argument, it suffices to establish \eqref{LDP-upper-aim} along this subsequence. Also, recall the Skorokhod space $(\tilde\Omega, \tilde{\mathcal{F}}, \tilde{\mathbb{P}})$ introduced below Remark \ref{two-level-as}, in particular,}
since $\tilde\Lambda^\eps \overset{d}{=}
\bar\Lambda^\eps$, the right-hand side of~\eqref{upper-as-occ}
equals
\begin{equation}\label{upper-skorokhod}
   \tilde{\mathbb{E}}\!\left(
       F\bigl( [\tilde\Lambda^\eps]_1 \bigr)
       \,+\, \frac{1}{2} \int_{H \times V_\delta} |\varphi|_H^2\,
         \tilde\Lambda^\eps(dy\, d\varphi)
   \right)
   \,-\, 2 \delta.
\end{equation}
All subsequent estimates are carried out on the above Skorokhod space.

For the first term, since $F \in C_b(\mathcal{P}(H))$ and
$[\tilde\Lambda^\eps]_1 \to \hat\Lambda$ $\tilde{\mathbb{P}}$-a.s.\
in $\mathcal{P}(H)$, by the $\tilde{\mathbb{P}}$-a.s.\ weak convergence
$\tilde\Lambda^\eps \to \tilde\Lambda$ and the continuity of the
projection onto the first marginal, dominated convergence gives
\[
   \lim_{\eps \to 0}
     \tilde{\mathbb{E}}\,F([\tilde\Lambda^\eps]_1)
    =  \tilde{\mathbb{E}}\, F(\hat \Lambda).
\]

For the second term, by the $\tilde{\mathbb{P}}$-a.s.\ weak convergence
$\tilde\Lambda^\eps \to \tilde\Lambda$ in $\mathcal{P}(H \times
V_\delta)$ and the lower semicontinuity of $\varphi \mapsto
|\varphi|_H^2$ on $V_\delta$ { (which implies the lower semicontinuity of the map $\theta \mapsto  \int_{H \times V_\delta} |\varphi|_H^2\, \theta(dy\, d\varphi)$)}, Fatou's lemma gives
\[
   \liminf_{\eps \to 0}\, \tilde{\mathbb{E}}
      \int_{H \times V_\delta} |\varphi|_H^2\, \tilde\Lambda^\eps(dy\, d\varphi)
    \geq
   \tilde{\mathbb{E}} \int_{H \times V_\delta} |\varphi|_H^2\,
     \tilde\Lambda(dy\, d\varphi).
\]
By Corollary~\ref{cor:cost-bound}, $\tilde{\mathbb{P}}$-almost surely,
\[
   \int_{H \times V_\delta} |\varphi|_H^2\, \tilde\Lambda(dy\, d\varphi)
    \geq  2\, I(\hat \Lambda).
\]
Combining,
\[
  \frac{1}{2} \liminf_{\eps \to 0}\,\tilde{\mathbb{E}}\,
        \int_{H \times V_\delta} |\varphi|_H^2\,
         \tilde\Lambda^\eps(dy\, d\varphi)
    \geq
   \tilde{\mathbb{E}}\bigl[ I(\hat \Lambda) \bigr].
\]

Putting the two pieces together and recalling that
 $\tilde\Lambda^\eps \overset{d}{=}
\bar\Lambda^\eps$, we obtain
\[
\begin{aligned}
  \liminf_{\eps \to 0}
    -\eps\, s^2(\eps)&\, \log \mathbb{E} \exp\Big(
     -\frac{1}{\eps\, s^2(\eps)} F(\mu^\eps)\Big)
   \geq  \tilde{\mathbb{E}}\bigl( F(\hat \Lambda) + I(\hat \Lambda) \bigr)
      \,-\, 2 \delta \\[10pt]
      &\geq  \inf_{\gamma \in \mathcal{P}(H)} \bigl( F(\gamma) + I(\gamma) \bigr)
      \,-\, 2 \delta.
\end{aligned}
\]
Since $\delta > 0$ is arbitrary, this gives
\[
   \liminf_{\eps \to 0}
    -\eps\, s^2(\eps)\, \log \mathbb{E} \exp\Big(
     -\frac{1}{\eps\, s^2(\eps)} F(\mu^\eps)\Big)
    \geq  \inf_{\gamma \in \mathcal{P}(H)} \bigl( F(\gamma) + I(\gamma) \bigr),
\]
which is the LDP upper bound~\eqref{LDP-upper-aim}.

{

\section{The stabilization SDE around a fixed point}\label{stab-lemma}

Fix $x_* \in H$. Following the strategy of Lemma~4.5
of~\cite{BudhirajaZoubouloglou2024}, we define the \emph{shifted drift}
at $x_*$ as the mapping $\mathcal{V}_{x_*} : H \to H$ given by
\begin{equation}\label{shifted-drift-def}
  \mathcal{V}_{x_*}(z)  \coloneqq   Q\bigl( \nabla U(x_* + z) - \nabla U(x_*) \bigr),
  \qquad z \in H.
\end{equation}
Due to Hypothesis~\ref{H1}.1,
the Lipschitz property of $\nabla U$ in Hypothesis~\ref{H2}, and the
strict inequality $\omega = \lambda -[\nabla U]_{\text{\tiny Lip}}  \|Q\|_{\mathcal{L}(H)}>0$, we have
\begin{equation}\label{shifted-dissipativity}
  \langle Az+\mathcal{V}_{x_*}(z),\, z \rangle_H
   \leq  \bigl( -\lambda + \|Q\|_{\mathcal{L}(H)}\, [\nabla U]_{\text{\tiny Lip}} \bigr)\,
    |z|_H^2=-\omega \,\vert z\vert_H^2,
  \qquad z \in D(A).
\end{equation}
Notice that
the constant $\omega$ is the effective dissipation rate of the shifted
dynamics and is independent of $x_*$.

For each $\eps \in (0,1)$ and each pair $(x_*, y_\eps) \in H \times H$,
let $y_{x_\star}^\eps = y_{x_\star}^\eps(\,\cdot\,; x_*, y_\eps)$ denote the
unique mild solution of the stochastic evolution equation
\begin{equation}\label{stab-sde}
  d y_{x_\star}^\eps(t)  =  \frac{1}{\eps}\,\left(Ay_{x_\star}^\eps(t)+  \mathcal{V}_{x_*}\bigl(y_{x_\star}^\eps(t)\bigr)\right)\,dt
     +  \frac{s(\eps)}{\sqrt{\eps}}\, dw^\eps(t),
  \qquad y_{x_\star}^\eps(0) = y_\eps,
\end{equation}
on $[0,1]$. Existence and uniqueness of a mild solution in
$L^2(\Omega; C([0,1];H))$ follow from the Lipschitz property of
$\mathcal{V}_{x_*}$ and the assumption $B_\eps^{1/2}: H \to H$
bounded, by~\cite[Theorem 7.4]{DPZ}.

Now, if $x_* \in D(A)$ and $(A x_* + Q \nabla U(x_*))$ is in the range of $B^{1/2}_{\eps}$, and if
$\bar v^\eps$ solves the controlled equation~\eqref{veps-controlled}
on an interval $[s_0, s_1]$, with
\[B_\eps^{1/2} \varphi^\eps(t) \equiv
-(A x_* + Q \nabla U(x_*)),\ \ \ \ \ \ t \in [s_0, s_1],\]
 then the shifted process
\[\bar v^\eps(s_0 + t) - x_*,\ \ \ \ \ t \in\,[0,s_1-s_0],\] is a mild solution
of~\eqref{stab-sde}, with initial condition
$y_\eps = \bar v^\eps(s_0) - x_*$.
This fact, together with Lemma \ref{stabilization} below will be used to construct the hold-phase control described in Section \ref{lb-control-construction}.
\smallskip

{
The goal of this section is to establish the stabilization estimate of Lemma~\ref{stabilization}, which asserts that, as $\eps\to0$, the occupation measure of the shifted process $y^\eps_{x_*}$ collapses onto $\delta_0$, uniformly over the averaging horizon $t \in [\kappa,1]$.}

\begin{Lemma}\label{stab-bounds}
There exists a constant $\kappa_1 \in (0,\infty)$, depending only on
$\lambda$, $\|Q\|_{\mathcal{L}(H)}$, and $[\nabla U]_{\text{\tiny Lip}}$,
such that for every $x_* \in H$, every collection
$ (y_\eps )_{\eps \in (0,1)} \subset H$, and every $\eps \in (0,1)$,
\begin{equation}\label{stab-moment}
  \sup_{0 \leq t \leq 1} \mathbb{E}\, \bigl| y_{x_\star}^\eps(t) \bigr|_H^2
   \leq  \kappa_1\, \bigl( 1 + |y_\eps|_H^2 \bigr),
\end{equation}
and
\begin{equation}\label{stab-integrated}
  \mathbb{E}\int_0^1 \bigl| y_{x_\star}^\eps(t) \bigr|_H^2\, dt
   \leq  \kappa_1\, \bigl( 1 + \eps\, |y_\eps|_H^2 \bigr).
\end{equation}
Moreover, for every $\beta \in (0,1/2)$ there is a constant
$\kappa_{1,\beta}\in(0,\infty)$, depending in addition on $\beta$, such
that
\begin{equation}\label{stab-frac}
  \mathbb{E}\int_0^1 \bigl| (-A)^\beta y_{x_\star}^\eps(t) \bigr|_H^2\, dt
   \leq  \kappa_{1,\beta}\, \bigl( 1 + \eps\, |y_\eps|_H^2 \bigr).
\end{equation}
\end{Lemma}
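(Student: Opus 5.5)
The plan is to mimic the proof of Lemma~\ref{moment-bound}, splitting the mild solution of \eqref{stab-sde} into a stochastic convolution and a remainder that solves a random PDE. Let
\[
Z^\eps(t)=\frac{s(\eps)}{\sqrt\eps}\int_0^t e^{(t-s)A/\eps}\,dw^\eps(s)
\]
be the same stochastic convolution as in \eqref{mild-decomposition}, and set $X^\eps:=y^\eps_{x_*}-Z^\eps$. Then $X^\eps$ solves pathwise
\[
\frac{d}{dt}X^\eps=\frac1\eps\Big(AX^\eps+\mathcal V_{x_*}(X^\eps+Z^\eps)\Big),\qquad X^\eps(0)=y_\eps .
\]
Since $\mathcal V_{x_*}(0)=0$, the map $\mathcal V_{x_*}$ is Lipschitz with constant $\|Q\|_{\mathcal L(H)}[\nabla U]_{\text{\tiny Lip}}$, uniformly in $x_*$. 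This uniformity is exactly why the constants will not depend on $x_*$. Using \eqref{shifted-dissipativity} applied to $X^\eps$, together with the Lipschitz bound for the difference $\mathcal V_{x_*}(X^\eps+Z^\eps)-\mathcal V_{x_*}(X^\eps)$, we get
\[
\frac{d}{dt}\,|X^\eps|_H^2\le -\frac{\omega}{\eps}|X^\eps|_H^2+\frac{c}{\eps}|Z^\eps|_H^2 .
\]
(To be rigorous, one first works with Yosida/Galerkin approximations, or argues on the mild form.)

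Next we integrate this differential inequality. Gronwall's lemma gives
\[
|X^\eps(t)|_H^2\le e^{-\omega t/\eps}|y_\eps|_H^2+\frac c\eps\int_0^te^{-\omega(t-s)/\eps}|Z^\eps(s)|_H^2\,ds .
\]
Taking expectations and using \eqref{as4} with $\beta=0$, which gives $\sup_{t,\eps}\E|Z^\eps(t)|_H^2<\infty$, the convolution term is bounded by $c\sup_s\E|Z^\eps(s)|^2_H$. This yields \eqref{stab-moment}. Integrating the inequality directly over $[0,1]$ instead, as in Step 1 of Lemma~\ref{moment-bound}, gives
\[
\int_0^1|X^\eps|_H^2\,dt\le \frac{\eps}{\omega}|y_\eps|_H^2+c\int_0^1|Z^\eps|_H^2\,dt,
\]
and hence \eqref{stab-integrated}. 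The point is that the initial datum only enters through the factor $\eps$, because $\int_0^1e^{-\omega t/\eps}dt\le\eps/\omega$.

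For \eqref{stab-frac} we write $X^\eps$ in mild form:
\[
X^\eps(t)=e^{tA/\eps}y_\eps+\frac1\eps\int_0^te^{(t-s)A/\eps}\mathcal V_{x_*}(y^\eps_{x_*}(s))\,ds .
\]
The first term is handled as in \eqref{as6}. Analyticity gives
\[
|(-A)^\beta e^{tA/\eps}y_\eps|_H\le c\,(t/\eps)^{-\beta}e^{-\lambda t/\eps}|y_\eps|_H,
\]
whose square is integrable in $t$ when $\beta<1/2$, with integral $c\eps|y_\eps|_H^2$. For the second term, $|\mathcal V_{x_*}(y)|_H\le c|y|_H$, and Young's convolution inequality as in \eqref{as7} bounds its $L^2(0,1)$ norm squared by
\[
\Big(\frac1\eps\int_0^\infty (s/\eps)^{-\beta}e^{-\lambda s/\eps}ds\Big)^2\int_0^1|y^\eps_{x_*}|_H^2\,dt\le c\int_0^1|y^\eps_{x_*}|_H^2\,dt .
\]
We then use \eqref{stab-integrated}. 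Finally, we add $\E|(-A)^\beta Z^\eps|_H^2$, which is controlled by \eqref{as4}.

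The main obstacle is a range issue. Estimate \eqref{as4} is only available for $\beta<\alpha$, whereas the statement claims all $\beta\in(0,1/2)$ with the constant $\kappa_{1,\beta}$. For the deterministic parts any $\beta<1/2$ works. For the stochastic convolution I would first try to restrict to $\beta<\alpha$, which is all that is used later, since tightness only needs some positive $\beta$. If the full range is genuinely needed, I would attempt to interpolate via Hypothesis~\ref{H1}(4); but I expect that the honest statement requires $\beta<\alpha$ (or $\beta\in(0,\alpha)$), and I would flag this in the proof.
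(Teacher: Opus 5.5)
Your proposal is correct and is essentially the paper's proof: your $X^\eps=y^\eps_{x_*}-Z^\eps$ is the paper's $\Phi^\eps$. The energy inequality, the Gronwall step, and the smoothing/Young-convolution treatment of the fractional bound are the same. Your concern about the range of $\beta$ is justified: the paper also just invokes \eqref{as4}, which covers only $\beta<\alpha$, and for larger $\beta$ the bound \eqref{stab-frac} can genuinely fail (for space-time white noise in $d=1$, Hypothesis~\ref{H1}.4 holds with any $\alpha<1/4$, yet $\E|(-A)^\beta Z^\eps(t)|_H^2=\infty$ for $\beta\geq 1/4$), so the statement should read $\beta\in(0,\alpha)$, which is all the tightness argument in Lemma~\ref{stab-stationarity} uses.
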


\begin{proof}
We write the mild solution of~\eqref{stab-sde} as
\begin{equation}\label{stab-mild}
  y_{x_\star}^\eps(t)
  =\Phi^\epsilon(t)
+Z^\eps(t),
\end{equation}
where
\begin{equation}\label{eq:451ab}\Phi^\e(t):= e^{tA/\eps}\, y_\eps
    +  \frac{1}{\eps}\int_0^t e^{(t-s)A/\eps}\, Q\,
        \bigl(\nabla U(x_* + y_{x_\star}^\eps(s)) - \nabla U(x_*)\bigr)\, ds\end{equation}
        and
\begin{equation}\label{eq:stoccon}
  Z^\eps(t)  \coloneqq  \frac{s(\eps)}{\sqrt{\eps}}\int_0^t e^{(t-s)A/\eps}\, dw^\eps(s).
\end{equation}
We have
\[
  \frac{d\Phi^\eps(t)}{dt}
   =  \frac{1}{\eps}\, A\,\Phi^\eps(t)
     +  \frac{1}{\eps}\, Q\bigl(\nabla U(x_* + y_{x_\star}^\eps(t)) - \nabla U(x_*)\bigr),\ \ \ \ \ \ \Phi^\e(0)=y_\e.
\]
Thus, using Hypotheses \ref{H1} and \ref{H2},
\begin{align*}
\frac{d}{dt}\, |\Phi^\eps(t)|_H^2 &= \frac{2}{\eps} \langle \Phi^\eps(t), A\Phi^\eps(t) + Q\bigl(\nabla U(x_* + y_{x_\star}^\eps(t)) - \nabla U(x_*)\bigr)\rangle_H\\
&\leq -\frac{2}{\eps}\left(\lambda - [\nabla U]_{\text{\tiny Lip}} \|Q\|_{\mathcal{L}(H)}\right) |\Phi^\eps(t)|_H^2 \\
&\quad +\frac{2}{\eps} \langle \Phi^\eps(t),  Q\bigl(\nabla U(x_* + y_{x_\star}^\eps(t)) - \nabla U(x_*+\Phi^\eps(t))\bigr)\rangle_H\\
&\leq -\frac{2\omega}{\eps}|\Phi^\eps(t)|_H^2 +\frac{2c}{\eps}|\Phi^\eps(t)|_H|Z^{\eps}(t)|_H
\leq -\frac{\omega}{\eps}|\Phi^\eps(t)|_H^2 + \frac{c^2}{\omega\eps}|Z^{\eps}(t)|_H^2,
\end{align*}
where the last inequality uses Young's inequality.
Thus
\[
  \frac{d}{dt}\, |\Phi^\eps(t)|_H^2  +  \frac{\omega}{\eps}\, |\Phi^\eps(t)|_H^2
   \leq  \frac{c}{\eps}\, |Z^\eps(t)|_H^2,
\]
for some constant $c$ depending only on $\|Q\|_{\mathcal{L}(H)}$ and
$[\nabla U]_{\text{\tiny Lip}}$. This implies
\[
  |\Phi^\eps(t)|_H^2
   \leq  e^{-\omega t/\epsilon}|y_\e|_H^2+\frac{c}{\eps}\int_0^t e^{-\omega(t-s)/\eps}
    \,|Z^\eps(s)|_H^2\, ds.
\]
Hence, due to \eqref{stab-mild} and \eqref{as4}, with $\beta=0$, we get
 both \eqref{stab-moment} and \eqref{stab-integrated}.

To upgrade this first-moment control to $(-A)^\beta$-regularity, the
argument is similar to the proof of \eqref{moment-fractional}, so we only
give a sketch.
Write $\Phi^{\eps}(t) = \tilde \Phi^{\eps}_1(t) + \tilde \Phi^{\eps}_2(t)$ where $\tilde \Phi^{\eps}_1(t)$ (resp. $\tilde \Phi^{\eps}_2(t)$) is the first (resp. second) term on the right side of \eqref{eq:451ab}.
Then as for the proof of \eqref{as6}, and using the Lipschitz property of $\nabla U$, we see that, for $\beta \in (0, 1/2)$,
$$
\int_0^1\bigl| (-A)^\beta \tilde\Phi_1^\eps(t) \bigr|^2_H	\,dt\leq c\,\epsilon \,|y_{\eps}|_H^2.$$
Similarly, as for the proof of \eqref{as7}, we see that
\begin{equation}\label{as7new}\begin{array}{l}\ds{\mathbb{E}
\int_0^1|(-A)^\beta \tilde \Phi^\e_2(t)|^2_H\,dt\leq \frac c{\epsilon^2}\left(\int_0^1 (s/\epsilon)^{-\beta}
e^{-\lambda s/\e}\,ds\right)^2
\int_0^1\mathbb{E}\,|y_{x_\star}^\eps(t)|^2_H\,ds
\leq c\bigl( 1 + \eps\, |y_\eps|_H^2 \bigr),
}
\end{array}
\end{equation}
where the last inequality follows from \eqref{stab-integrated}.
Combining the last two bounds with the uniform bound in \eqref{as4}, we have from \eqref{stab-mild} that \eqref{stab-frac} holds.
\end{proof}

{
The bounds just obtained will guarantee the tightness of the occupation measures of $y^\eps_{x_*}$ and control their limiting behavior below. Before turning to that probabilistic step, we isolate its deterministic counterpart. The next lemma shows that a measure which is infinitesimally invariant under the shifted deterministic flow can only be the Dirac mass at the origin.}

\begin{Lemma}\label{flow-invariance}
Fix $x_*\in H$, let $\mathcal V_{x_*}$ be the shifted drift defined in
\eqref{shifted-drift-def}, and let $\Phi_t(z)=z(t;z)$, $t\geq 0$, denote the
mild solution flow of
\[
  \frac{dz}{dt}(t)=A z(t)+\mathcal{V}_{x_\star}(z(t)),\qquad z(0)=z \in\,H.
\]
Suppose that $\mu\in\mathcal P(H)$ has finite first moment and satisfies
\begin{equation}\label{gen-identity}
  \int_H \langle\nabla\varphi(\pi_m y),\,
  A\pi_m y+\pi_m\mathcal{V}_{x_*}(y)\rangle_H\,\mu(dy)=0,
  \qquad m\in\N,\ \ \varphi\in C^2_b(\pi_m(H)).
\end{equation}
Then $\mu=\delta_0$.
\end{Lemma}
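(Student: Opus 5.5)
The plan is a Lyapunov-function argument. We test \eqref{gen-identity} against a bounded radial function of $|\pi_m y|_H^2$ and then let $m\to\infty$. The strict dissipativity \eqref{shifted-dissipativity} should force the limiting integral to be strictly negative unless $\mu=\delta_0$. The flow $\Phi_t$ itself is not used directly: the infinitesimal identity \eqref{gen-identity} is enough.

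\emph{Choice of test function.} For each $m\in\N$ take $\varphi(x)\coloneqq |x|_H^2/(1+|x|_H^2)$ for $x\in\pi_m(H)$. This function is smooth and bounded, with bounded first and second derivatives, so $\varphi\in C^2_b(\pi_m(H))$. Its gradient is
\[
\nabla\varphi(x)=\frac{2x}{(1+|x|_H^2)^2}.
\]
Since $A$ commutes with $\pi_m$, identity \eqref{gen-identity} becomes
\[
0=\int_H \frac{2\bigl(\langle \pi_m y, A\pi_m y\rangle_H+\langle \pi_m y,\mathcal V_{x_*}(y)\rangle_H\bigr)}{(1+|\pi_m y|_H^2)^2}\,\mu(dy).
\]
By Hypothesis~\ref{H1}.1 we have $\langle\pi_m y,A\pi_m y\rangle_H\le -\lambda|\pi_m y|_H^2$. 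Hence
\[
0\le \int_H h_m\,d\mu,\qquad h_m(y)\coloneqq\frac{2\bigl(-\lambda|\pi_m y|_H^2+\langle \pi_m y,\mathcal V_{x_*}(y)\rangle_H\bigr)}{(1+|\pi_m y|_H^2)^2}.
\]
Using the inequality instead of the exact term is the key point: it removes the unbounded operator $A$, whose contribution need not converge as $m\to\infty$ but has the right sign.

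\emph{Passage to the limit.} Since $\mathcal V_{x_*}(0)=0$ and $\mathcal V_{x_*}$ is Lipschitz with constant $\|Q\|_{\mathcal L(H)}[\nabla U]_{\text{\tiny Lip}}$, we get $|h_m(y)|\le 2\lambda+c|y|_H$. This bound is integrable because $\mu$ has a finite first moment. Since $\pi_m y\to y$ in $H$, the functions $h_m$ converge pointwise to
\[
h(y)=\frac{2\bigl(-\lambda|y|_H^2+\langle y,\mathcal V_{x_*}(y)\rangle_H\bigr)}{(1+|y|_H^2)^2}\le -\frac{2\omega|y|_H^2}{(1+|y|_H^2)^2},
\]
where the inequality uses $\langle y,\mathcal V_{x_*}(y)\rangle_H\le \|Q\|_{\mathcal L(H)}[\nabla U]_{\text{\tiny Lip}}|y|_H^2$, the same bound that gives \eqref{shifted-dissipativity}. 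Dominated convergence then yields
\[
0\le -2\omega\int_H \frac{|y|_H^2}{(1+|y|_H^2)^2}\,\mu(dy).
\]
Because $\omega>0$, the integrand must vanish $\mu$-a.e., so $\mu(\{0\})=1$, that is, $\mu=\delta_0$.

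\emph{Main obstacle.} The only delicate point is the interplay between the unbounded $A$ and the truncation $\pi_m$. The exact term $\langle\pi_m y,A\pi_m y\rangle_H$ has no finite limit for $y\notin D((-A)^{1/2})$. The plan handles this by dropping it in favour of the one-sided bound, which needs no regularity of $\mu$. The remaining work is checking the domination, which requires the first-moment hypothesis and the boundedness of $\varphi$ and its derivatives. I therefore expect the argument to close without further input.
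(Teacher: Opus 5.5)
Your proof is correct, and it takes a genuinely different and much shorter route than the paper's.

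\textbf{The paper's route.} It passes through the nonlinear flow in four steps. First, it mollifies the Galerkin vector fields $F_m=A+\pi_m\mathcal V_{x_*}$ on $\pi_m(H)$. Second, it uses the incremental one-sided Lipschitz bound $\langle F_m(z)-F_m(z'),z-z'\rangle_H\le-\omega|z-z'|_H^2$ to get gradient bounds $|\nabla(\psi\circ\Phi^{m,n}_s)|\le|\nabla\psi|_\infty$, uniform in $m,n,s$. Third, it inserts $\psi\circ\Phi^{m,n}_s$ as test functions in \eqref{gen-identity}; the first moment controls the error from replacing $\mathcal V_{x_*}(y)$ by $\mathcal V_{x_*}(\pi_m y)$. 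Fourth, after the Galerkin limit $\Phi^m_t(\pi_m z)\to\Phi_t(z)$, which it uses without detailed justification, it obtains full flow invariance $\int\psi\circ\Phi_t\,d\mu=\int\psi\,d\mu$. Only then does the contraction $|\Phi_t(z)|_H\le e^{-\omega t}|z|_H$ force $\mu=\delta_0$.

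\textbf{Your route.} You plug a single bounded radial Lyapunov function directly into the infinitesimal identity. The radial gradient makes the $A$-contribution sign-definite, so it can be discarded via $\langle A\pi_m y,\pi_m y\rangle_H\le-\lambda|\pi_m y|_H^2$. The remaining terms converge by dominated convergence under the bound $c(1+|y|_H)$. This yields $\int|y|_H^2(1+|y|_H^2)^{-2}\,d\mu\le0$. All steps check out, including integrability for fixed $m$, since $|A\pi_m y|_H\le\alpha_m|y|_H$.

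\textbf{Comparison.} Your argument needs no mollification, no flow-derivative estimates and no convergence of Galerkin flows. It also uses only dissipativity at the origin in split form, namely $\langle Ax,x\rangle_H\le-\lambda|x|_H^2$ on $\pi_m(H)$ and $|\mathcal V_{x_*}(y)|_H\le\|Q\|_{\mathcal L(H)}[\nabla U]_{\mathrm{Lip}}|y|_H$, rather than the incremental estimate. The paper's route buys a stronger intermediate fact: infinitesimal invariance implies invariance under the nonlinear flow. That principle would still work if uniqueness of the invariant measure came only from attractivity of the flow, with no Lyapunov function compatible with the truncation $\pi_m$. In the present setting that generality is not needed, and your argument could replace the paper's proof.
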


\begin{proof}
Set $\mu^{(m)}:=(\pi_m)_\#\mu\in\mathcal P(\pi_m(H))$. Adding and subtracting
$\pi_m\mathcal V_{x_*}(\pi_m y)$ in \eqref{gen-identity}, we obtain
\begin{equation}\label{stab-limit-identity}
\begin{array}{l}\ds{\int_H \langle \nabla\varphi(y),\, A y+\pi_m\mathcal{V}_{x_*}(y)\rangle_H\,
    \mu^{(m)}(dy) }\\[12pt]\ds{\quad \quad \quad  = \int_H \langle \nabla\varphi(\pi_m y),\, \pi_m(\mathcal{V}_{x_*}(\pi_m y)-\mathcal{V}_{x_*}(y))\rangle_H\,
    \mu(dy) =:R_m(\varphi).}\end{array}
\end{equation}
Notice that, due to the Lipschitz-continuity of $\mathcal{V}_{x_\star}$
\[|R_m(\varphi)|\leq c\,|\nabla \varphi|_\infty \int_H|\pi_m y-y|\,\mu(dy),\]
so that, since $\mu$ has finite first moment, for every $\kappa>0$
\begin{equation}\label{as10}
\lim_{m\to\infty}\sup\,\{	|R_m(\varphi)|\,:\,|\nabla \varphi|_\infty\leq \kappa\}=0.
\end{equation}

As in \eqref{shifted-dissipativity} (using Hypotheses \ref{H1}.1 and
\ref{H2}), the shifted drift satisfies the incremental dissipativity estimate
\[
        \langle A h+\mathcal V_{x_*}(z+h)-\mathcal V_{x_*}(z),h\rangle_H
        \leq -\omega |h|_H^2,
        \qquad z\in H,\ h\in D(A).
\]

Fix $m\in\mathbb N$ and set
\[
        E_m:=\pi_m(H),\qquad
        F_m(z):=Az+\pi_m\mathcal V_{x_*}(z),\qquad z\in E_m .
\]
 Since $E_m$ is finite-dimensional and invariant under $A$, the vector field
$F_m:E_m\to E_m$ is globally Lipschitz with Lipschitz constant $L_m$.
From the above dissipativity estimate, we also have the following one-sided Lipschitz estimate that is uniform in
$m$: for all $z,z'\in E_m$,
\[
\begin{aligned}
        &\langle F_m(z)-F_m(z'),z-z'\rangle_H
        =
        \langle A(z-z'),z-z'\rangle_H +
        \langle
        \pi_m(\mathcal V_{x_*}(z)-\mathcal V_{x_*}(z')),
        z-z'\rangle_H  \\[8pt]
        &\quad \quad \quad =
        \langle A(z-z'),z-z'\rangle_H
        +
        \langle
        \mathcal V_{x_*}(z)-\mathcal V_{x_*}(z'),
        z-z'\rangle_H  \leq -\omega |z-z'|_H^2 .
\end{aligned}
\]
Let $\Phi_t^m(z)=z^{(m)}(t;z)$ denote the unique solution of
\[
        \dot z(t)=F_m(z(t)),\qquad z(0)=z\in E_m,
\]
which follows from the Lipschitz property of $F_m$.

We will now consider a suitable smooth approximation of  $F_m$ in the finite-dimensional space $E_m$.  Identify
$E_m$ with $\mathbb R^m$, let $\rho$ be a standard smooth mollifier
supported in the unit ball, and for $n \in \mathbb{N}$, define
\[
        \rho_n(x):=n^m\rho(nx),\qquad
        F_{m,n}:=\rho_n*F_m .
\]
Then $F_{m,n}\in C^\infty(E_m;E_m)$ and from the Lipschitz property,
\begin{equation}\label{eq:lipbd-ac}
        \sup_{z\in E_m}|F_{m,n}(z)-F_m(z)|_H
        \leq \frac{L_m}{n}, \;\;        |F_{m,n}(z)-F_{m,n}(z')|
         \leq L_m |z-z'|_H ,
       \qquad z,z'\in E_m .
\end{equation}
We also have the following uniform in $m,n$, one-sided Lipschitz estimate: for $z,z'\in E_m$, $n \in \mathbb{N}$,
\begin{equation}\label{eq:1216ac}
\begin{aligned}
        \langle F_{m,n}(z)-F_{m,n}(z'),z-z'\rangle_H
        &=
        \int_{E_m}\rho_n(r)
        \langle F_m(z-r)-F_m(z'-r),z-z'\rangle_H\,dr  \\
        &\leq
        -\omega |z-z'|_H^2.
\end{aligned}
\end{equation}
Let $\Phi_t^{m,n}(z)$ be the  flow generated by the smooth vector field $F_{m,n}$:
\[
        \dot z(t)=F_{m,n}(z(t)),\qquad z(0)=z\in E_m .
\]
For $\psi\in C_b^2(H)$ define
\[
        u_{m,n}(s,z):=\psi(\Phi_s^{m,n}(z)).
\]
Then
$u_{m,n}(s,\cdot)\in C_b^2(E_m)$.

We will now provide a uniform, in $m,n,s$, gradient bound on $u_{m,n}(s,\cdot) $.  The
derivative of the smooth flow $\Phi_s^{m,n}$ solves the equation
\[
        \frac{d}{ds}D\Phi_s^{m,n}(z)h
        =
        DF_{m,n}(\Phi_s^{m,n}(z))D\Phi_s^{m,n}(z)h,
        \qquad
        D\Phi_0^{m,n}(z)h=h .
\]
Also, from \eqref{eq:1216ac}, we see that
\[
        \langle DF_{m,n}(x)k,k\rangle_H
        \leq -\omega |k|_H^2,
        \qquad x,k\in E_m .
\]
Therefore
\[
\begin{aligned}
        \frac{d}{ds}|D\Phi_s^{m,n}(z)h|_H^2
        &=
        2\left\langle
        DF_{m,n}(\Phi_s^{m,n}(z))D\Phi_s^{m,n}(z)h,
        D\Phi_s^{m,n}(z)h
        \right\rangle_H  \\[8pt]
        &\leq
        -2\omega |D\Phi_s^{m,n}(z)h|_H^2 .
\end{aligned}
\]
It then follows that
\[
        \|D\Phi_s^{m,n}(z)\|_{\mathcal L(E_m)}
        \leq e^{-\omega s}\leq 1,
\]
for all $m,n,z$ and $s\geq0$.  Hence
\[
        |\nabla u_{m,n}(s,z)|_H
        =
        \left|D\Phi_s^{m,n}(z)^*
        \nabla\psi(\Phi_s^{m,n}(z))\right|_H
        \leq |\nabla\psi|_\infty .
\]
Thus
\begin{equation}\label{eq:unifgradbd}
        \sup_{m,n}\sup_{s\geq0}
        |\nabla u_{m,n}(s,\cdot)|_\infty
        \leq |\nabla\psi|_\infty .
\end{equation}

Next, from the flow property of the smooth
ODE $\dot z=F_{m,n}(z)$,
we see that
\[
        D\Phi_s^{m,n}(z)F_{m,n}(z)
        =
        F_{m,n}(\Phi_s^{m,n}(z)).
\]
Therefore
\[
\begin{aligned}
        \frac d{ds}u_{m,n}(s,z)
        &=
        \left\langle
        \nabla\psi(\Phi_s^{m,n}(z)),
        F_{m,n}(\Phi_s^{m,n}(z))
        \right\rangle_H  =
        \left\langle
        \nabla\psi(\Phi_s^{m,n}(z)),
        D\Phi_s^{m,n}(z)F_{m,n}(z)
        \right\rangle_H  \\[8pt]
        &=
        \left\langle
        D\Phi_s^{m,n}(z)^*
        \nabla\psi(\Phi_s^{m,n}(z)),
        F_{m,n}(z)
        \right\rangle_H  =
        \left\langle
        \nabla u_{m,n}(s,z),
        F_{m,n}(z)
        \right\rangle_H .
\end{aligned}
\]
Integrating in $s$ gives
\[
        u_{m,n}(t,z)-u_{m,n}(0,z)
        =
        \int_0^t
        \left\langle
        \nabla u_{m,n}(s,z),
        F_{m,n}(z)
        \right\rangle_H\,ds .
\]

Integrating this identity against $\mu^{(m)}$ and adding and subtracting
$F_m$ gives
\[
\begin{aligned}
& \int_{E_m}\psi(\Phi_t^{m,n}(z))\,\mu^{(m)}(dz)
  -\int_{E_m}\psi(z)\,\mu^{(m)}(dz)  \\
&\quad =
  \int_0^t
  \int_{E_m}
  \left\langle
  \nabla u_{m,n}(s,z),F_m(z)
  \right\rangle_H
  \,\mu^{(m)}(dz)\,ds  \\
&\qquad+
  \int_0^t
  \int_{E_m}
  \left\langle
  \nabla u_{m,n}(s,z),F_{m,n}(z)-F_m(z)
  \right\rangle_H
  \,\mu^{(m)}(dz)\,ds .
\end{aligned}
\]
Since $u_{m,n}(s,\cdot)\in C_b^2(E_m)$, we have from the identity
\eqref{stab-limit-identity} that
\[
        \int_{E_m}
        \left\langle
        \nabla u_{m,n}(s,z),F_m(z)
        \right\rangle_H
        \,\mu^{(m)}(dz)
        =
        R_m(u_{m,n}(s,\cdot)).
\]
Thus
\[
\begin{aligned}
& \int_{E_m}\psi(\Phi_t^{m,n}(z))\,\mu^{(m)}(dz)
  -\int_{E_m}\psi(z)\,\mu^{(m)}(dz) =
  \int_0^t R_m(u_{m,n}(s,\cdot))\,ds
  +
  \mathcal E_{m,n}(t),
\end{aligned}
\]
where
\[
        \mathcal E_{m,n}(t)
        :=
        \int_0^t
        \int_{E_m}
        \left\langle
        \nabla u_{m,n}(s,z),F_{m,n}(z)-F_m(z)
        \right\rangle_H
        \,\mu^{(m)}(dz)\,ds .
\]
For fixed $m$, from the bounds in \eqref{eq:unifgradbd} and  \eqref{eq:lipbd-ac}, as $n\to \infty$,
\[
\begin{aligned}
        |\mathcal E_{m,n}(t)|
        &\leq
        t|\nabla\psi|_\infty
        \sup_{z\in E_m}|F_{m,n}(z)-F_m(z)|_H \leq
        t|\nabla\psi|_\infty \frac{L_m}{n}
        \to 0 .
\end{aligned}
\]
Thus, since from \eqref{eq:lipbd-ac}
\[
\begin{aligned}
        |\Phi_t^{m,n}(z)-\Phi_t^m(z)|_H
        &\leq
        \int_0^t
        |F_{m,n}(\Phi_s^{m,n}(z))-F_m(\Phi_s^m(z))|_H\,ds  \\
        &\leq
        L_m\int_0^t|\Phi_s^{m,n}(z)-\Phi_s^m(z)|_H\,ds
        +
        t\sup_{y\in E_m}|F_{m,n}(y)-F_m(y)|_H,
\end{aligned}
\]
we have from Gronwall's lemma and using \eqref{eq:lipbd-ac} again, that
\[
    \lim_{n\to\infty}    \sup_{0\leq s\leq t}
        |\Phi_s^{m,n}(z)-\Phi_s^m(z)|_H
        =0.
\]
Therefore, by bounded convergence,
\[
    \lim_{n\to\infty}   \int_{E_m}\psi(\Phi_t^{m,n}(z))\,\mu^{(m)}(dz)
        =
        \int_{E_m}\psi(\Phi_t^m(z))\,\mu^{(m)}(dz).
\]

Combining the preceding estimates and using the uniform gradient bound \eqref{eq:unifgradbd},
we obtain, for every fixed $t\geq0$,
\begin{equation}\label{as1003}
\begin{aligned}
&\limsup_{m\to \infty} \left|
  \int_{E_m}\psi(\Phi_t^m(z))\,\mu^{(m)}(dz)
  -
  \int_{E_m}\psi(z)\,\mu^{(m)}(dz)
  \right| \\[8pt]
&\qquad \qquad\qquad\leq
        \limsup_{m\to \infty} t\sup\Bigl\{
        |R_m(\varphi)|:
        |\nabla\varphi|_\infty\leq |\nabla\psi|_\infty
        \Bigr\} =0,
\end{aligned}
\end{equation}
where the last equality is from \eqref{as10}.

Note that, for every fixed $z\in H$ and every fixed $t\geq0$,
\[
     \lim_{m\to\infty}   \Phi_t^m(\pi_m z)= \Phi_t(z)
        \qquad\text{in }H .
\]
Since $\mu^{(m)}=(\pi_m)_\#\mu$,
\[
        \int_{E_m}\psi(\Phi_t^m(z))\,\mu^{(m)}(dz)
        =
        \int_H \psi(\Phi_t^m(\pi_m z))\,\mu(dz),
\]
so that, by the pointwise convergence just proved and bounded convergence,
\[
        \lim_{m\to\infty}
        \int_H \psi(\Phi_t^m(\pi_m z))\,\mu(dz)
        =
        \int_H \psi(\Phi_t(z))\,\mu(dz).
\]
Also,
\[
        \int_{E_m}\psi(z)\,\mu^{(m)}(dz)
        =
        \int_H \psi(\pi_m z)\,\mu(dz)
        \longrightarrow
        \int_H \psi(z)\,\mu(dz).
\]
Thus, combining with \eqref{as1003},
\begin{equation}\label{as13}
        \int_H \psi(z(t;z))\,\mu(dz) = \int_H \psi(\Phi_t(z))\,\mu(dz)
        =
        \int_H \psi(z)\,\mu(dz),\qquad \psi\in C^2_b(H),\ t\geq 0.
\end{equation}

Finally, the dissipativity condition~\eqref{shifted-dissipativity} gives
\begin{equation}\label{stab-projected-contraction}
  |z(t;z)|_H  \leq  e^{-\omega t}\, |z|_H,
  \qquad t \geq 0,\ \ \ \  z \in\,H.
\end{equation}
Hence, letting $t \to \infty$ in~\eqref{as13} and using
dominated convergence, we conclude that
\[
  \int_{H} \psi(z)\, \mu(dz) = \psi(0),
  \qquad \psi \in C^2_b(H),
\]
which means $\mu = \delta_0$.
\end{proof}
{
We now return to the stochastic problem. Using the a priori bounds of Lemma~\ref{stab-bounds}, the following lemma shows that the occupation measures of $y^\eps_{x_*}$ are tight and that, by an It\^o-formula argument, every subsequential limit satisfies exactly the infinitesimal invariance identity~\eqref{gen-identity} required in Lemma~\ref{flow-invariance}.}

\begin{Lemma}\label{stab-stationarity}
Fix $\kappa\in(0,1]$ and assume that $\sup_\eps \eps\,|y_\eps|_H^2<\infty$.
Let $\eps_n\downarrow 0$, let $t_n\in[\kappa,1]$, and consider the random
occupation measures
\begin{equation}\label{stab-pin}
  \oc^n  \coloneqq  \frac{1}{t_n}\int_0^{t_n} \delta_{y_{x_\star}^{\eps_n}(s)}\, ds
   \in  \mathcal{P}(H).
\end{equation}
Then $(\oc^n)_{n\in\N}$ is tight, and along any subsequence for which
$\oc^n\Rightarrow\oc$, the limit $\oc$ has finite first moment and
satisfies, $\mathbb P$-a.s.,
\begin{equation}\label{gen-identity-limit}
  \int_H \langle\nabla\varphi(\pi_m y),\,
  A\pi_m y+\pi_m\mathcal{V}_{x_*}(y)\rangle_H\,\oc(dy)=0,
  \qquad m\in\N,\ \ \varphi\in C^2_b(\pi_m(H)).
\end{equation}
\end{Lemma}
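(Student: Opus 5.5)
Tightness and the finite first moment come from the a priori bounds of Lemma~\ref{stab-bounds}, exactly as in Proposition~\ref{Q-tightness}. The invariance identity comes from It\^o's formula applied to finite-mode test functions, as in Lemma~\ref{orthogonality}.

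\emph{Tightness and moments.} Since $t_n\geq\kappa$ and $\sup_\eps\eps|y_\eps|_H^2<\infty$, \eqref{stab-integrated} and \eqref{stab-frac} give
\[
\sup_n \E\int_H\bigl(|y|_H^2+|(-A)^\beta y|_H^2\bigr)\,\oc^n(dy)\leq \kappa^{-1}\sup_n\kappa_{1,\beta}\bigl(1+\eps_n|y_{\eps_n}|_H^2\bigr)<\infty .
\]
The sets $K_R=\{|(-A)^\beta y|_H\leq R\}$ are compact in $H$. Markov's inequality then gives tightness of the mean measures, and \cite[Theorem~2.11]{BudhirajaDupuis2019} gives tightness of $(\oc^n)$. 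Along a convergent subsequence, pass to a Skorokhod representation. Fatou's lemma and lower semicontinuity of $\theta\mapsto\int|y|_H^2\,d\theta$ give $\E\int|y|_H^2\,d\oc<\infty$, so $\oc$ has finite second, hence first, moment a.s.

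\emph{It\^o identity.} Fix $m$ and $\varphi\in C^2_b(\pi_m(H))$, and apply It\^o's formula to $\varphi(\pi_m y^{\eps_n}_{x_*}(t))$ on $[0,t_n]$. The process $\pi_m y^{\eps_n}_{x_*}$ is a finite-dimensional semimartingale with drift $\eps_n^{-1}(A\pi_m y+\pi_m\mathcal V_{x_*}(y))$, so no truncation $\phi_R$ is needed: $\varphi$, $\nabla\varphi$ and $D^2\varphi$ are bounded. Multiplying by $\eps_n/t_n$ gives
\[
\int_H\langle\nabla\varphi(\pi_m y),A\pi_m y+\pi_m\mathcal V_{x_*}(y)\rangle_H\,\oc^n(dy)
=\frac{\eps_n}{t_n}\bigl(\varphi(\pi_m y(t_n))-\varphi(\pi_m y_{\eps_n})\bigr)-\frac{s^2(\eps_n)}{2t_n}\int_0^{t_n}\mathrm{tr}\bigl(D^2\varphi\,\pi_m B_{\eps_n}\pi_m\bigr)ds-\frac{\sqrt{\eps_n}s(\eps_n)}{t_n}M_n .
\]
The right-hand side vanishes in $L^1$:
\begin{itemize}
\item the boundary term is at most $2\|\varphi\|_\infty\eps_n/\kappa$;
\item the trace term is $O(m\,c_B^2 s^2(\eps_n))$ by \eqref{CB};
\item the martingale term has second moment at most $\eps_n s^2(\eps_n)c_B^2|\nabla\varphi|_\infty^2/\kappa^2$.
\end{itemize}

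\emph{Passage to the limit.} Set $h(y):=\langle\nabla\varphi(\pi_m y),A\pi_m y+\pi_m\mathcal V_{x_*}(y)\rangle_H$. It is continuous on $H$ and satisfies $|h(y)|\leq c_m(1+|y|_H)$, using $\|A\pi_m\|\leq\alpha_m$ and the Lipschitz property of $\mathcal V_{x_*}$. This is the step I expect to need the most care, since $h$ is unbounded. I would use the same truncation $h^M=(-M)\vee h\wedge M$ as in the proof of \eqref{h-L1-convergence}, with uniform integrability supplied by the uniform second-moment bound above. This gives $\int h\,d\tilde\oc^n\to\int h\,d\tilde\oc$ in $L^1(\tilde{\mathbb P})$, so $\int h\,d\oc=0$ a.s. for each fixed $(m,\varphi)$.

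\emph{Simultaneous null set.} To obtain the identity a.s.\ simultaneously for all $m$ and $\varphi$, take a countable family $\{\varphi_j\}\subset C^2_b(\pi_m(H))$ that is dense in the sense that $\nabla\varphi_j\to\nabla\varphi$ boundedly and pointwise. Dominated convergence, using the linear growth bound and the finite first moment of $\oc$, then extends the identity from $\{\varphi_j\}$ to all $\varphi$. Finally, the statement transfers back from the Skorokhod space since it concerns only the law of $\oc$.
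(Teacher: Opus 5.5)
Your proposal is correct and follows the paper's proof essentially step by step. You obtain tightness and moments from \eqref{stab-integrated}--\eqref{stab-frac}, apply It\^o's formula to $\varphi(\pi_m y^{\eps_n}_{x_*})$ with the three remainder terms vanishing in $L^1$, pass to the Skorokhod limit by truncation and uniform integrability, and use a countable family of test functions to get a single null set. Your version of that last step, with gradients converging boundedly and pointwise, is in fact the accurate form of the paper's separability claim: no countable subset of $C^2_b(\pi_m(H))$ approximates all gradients in the global sup norm, but dominated convergence needs only bounded pointwise (or locally uniform) approximation.
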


\begin{proof}
Since $t_n \geq \kappa$, the
integrated bound~\eqref{stab-integrated} of Lemma \ref{stab-bounds} gives the uniform estimate
\begin{equation}\label{eq:unifestab}
  \sup_n \mathbb{E}\int_H |y|_H^2\, \oc^n(dy)
   =  \sup_n \frac{1}{t_n}\int_0^{t_n} \mathbb{E}\,|y_{x_\star}^{\eps_n}(s)|_H^2\, ds
   \leq  \frac{1}{\kappa}\,\sup_n \kappa_1\bigl(1 + \eps_n |y_{\eps_n}|_H^2\bigr)
   <  \infty,
\end{equation}
where we have used the hypothesis that $\sup_\eps \eps|y_\eps|_H^2 < \infty$.
Similarly, the fractional bound~\eqref{stab-frac} gives, for $\beta\in(0,1/2)$,
\[\sup_n \mathbb{E}\int_H |(-A)^\beta y|_H^2\, \oc^n(dy)
< \infty,\]
and the tightness of $(\oc^n)_{n \in\,\mathbb{N}}$ as a family of $\mathcal{P}(H)$-valued
random variables follows. Fix a subsequence (not relabeled) along which
$\oc^n \Rightarrow \oc$.

\smallskip

We introduce the class of test functions
\[
  \eta(y)  =  \varphi(\pi_m y),
  \qquad m \in \N,\quad \varphi \in C^2_b(\pi_m(H)).
\]
For such $\eta$ one has
\[
  \nabla\eta(y) = \pi_m \nabla\varphi(\pi_m y) \in \pi_m(H) \subset D(A),
  \qquad
  D^2\eta(y) = \pi_m D^2\varphi(\pi_m y)\pi_m.
\]
Applying It\^{o}'s formula to $\eta( y_{x_\star}^{\eps_n})$, multiplying by
$\eps_n$, and using $\pi_m A = A\pi_m$ and $\pi_m Q = Q\pi_m$,
\begin{equation}\label{stab-ito-mode}
\begin{aligned}
  &\int_0^{t_n}\!\!\bigl\langle \nabla\varphi(\pi_m y_{x_\star}^{\eps_n}(s)),
     A\pi_m y_{x_\star}^{\eps_n}(s)
     + \pi_m \mathcal{V}_{x_*}(y_{x_\star}^{\eps_n}(s)) \bigr\rangle_H\, ds \\
  &\quad = \eps_n\bigl(\varphi(\pi_my_{x_\star}^{\eps_n}(t_n))
              - \varphi(\pi_m y_{\eps_n})\bigr)
      - \frac{1}{2} s^2(\eps_n)\!\int_0^{t_n}\!
        \mathrm{tr}\,\bigl(\pi_m B_{\eps_n}\pi_m
          D^2\varphi(\pi_my_{x_\star}^{\eps_n}(s))\bigr)\, ds \\
  &\quad\quad
     - \sqrt{\eps_n}\, s(\eps_n)\!\int_0^{t_n}\!\!
       \bigl\langle \pi_m \nabla\varphi(\pi_my_{x_\star}^{\eps_n}(s)),\,
         dw^{\eps_n}(s)\bigr\rangle_H.
\end{aligned}
\end{equation}
The right-hand side is $o(1)$ in $L^1(\Omega)$. Indeed, the boundary term is
bounded by $2\eps_n\|\varphi\|_\infty \downarrow 0$, the trace term is
bounded in absolute value by
\[\frac{1}{2} s^2(\eps_n) t_n m
c_B^2\|D^2\varphi\|_\infty \to 0,\ \ \ \ \ \text{as}\ n\to\infty,\]
 and the It\^{o} integral has
$L^2(\Omega)$-norm bounded by
\[\sqrt{\eps_n}\,s(\eps_n)\,c_B\,
\|\nabla\varphi\|_\infty\, t_n^{1/2} \downarrow 0,\ \ \ \ \ \text{as}\ n\to\infty.\]

The left-hand side of~\eqref{stab-ito-mode} equals
\[
  t_n \int_H \bigl\langle \nabla\varphi(\pi_m y),\,
    A\pi_m y + \pi_m\mathcal{V}_{x_*}(y)\bigr\rangle_H\,\oc^n(dy),\ \ \ \ \ \mathbb{P}-\text{a.s.}
\]
Thus, setting \[g_{m,\varphi}(y) \coloneqq \langle\nabla\varphi(\pi_m y),
A\pi_m y + \pi_m\mathcal{V}_{x_*}(y)\rangle_H,\ \ \ \ \ \ y \in\,H,\]
\eqref{stab-ito-mode} gives \[t_n\int_H g_{m,\varphi}(y)\,\oc^n(dy) = R_n,
\ \ \ \ \ \ \mathbb{P}-\text{a.s.}\] where $R_n \to 0$ in $L^1(\Omega,\mathbb{P})$.
Since $t_n \geq \kappa > 0$,
\begin{equation}\label{L1-vanish}
  \lim_{n\to\infty}\int_H g_{m,\varphi}(y)\,\oc^n(dy)= 0,
  \qquad \text{in } L^1(\Omega,\mathbb{P}).
\end{equation}
The passage from this $L^1$ zero-limit and the convergence in
distribution $\oc^n \Rightarrow \oc$ to the $\mathbb{P}$-almost-sure
identity
\begin{equation}\label{stab-identity-as}
  \int_H g_{m,\varphi}(y)\,\oc(dy) = 0
  \qquad \mathbb{P}\text{-a.s.}
\end{equation}
follows by the same argument as in Remark~\ref{two-level-as}
and the Skorokhod representation argument in
Subsection~\ref{as30}. First we apply the Skorokhod representation
theorem to obtain $\tilde\oc^n$, $\tilde\oc$ on a new space with the
same laws and $\tilde\oc^n \to \tilde\oc$, $\tilde{\mathbb{P}}$-a.s. Next we
transfer the $L^1$ vanishing~\eqref{L1-vanish} to the new space and
extract a $\tilde{\mathbb{P}}$-a.s.\ convergent subsequence. Then we combine
with the $\tilde{\mathbb{P}}$-a.s.\ weak convergence $\tilde\oc^n \to
\tilde\oc$ and the linear-growth bound on $g_{m,\varphi}$, together with \eqref{eq:unifestab}, to conclude
\begin{equation}\label{as31}\int g_{m,\varphi}(y)\,d\tilde\oc(y) = 0,\ \ \ \ \ \ \tilde{\mathbb{P}}-\text{a.s.}\end{equation} This gives the $\tilde{\mathbb{P}}-\text{a.s.}$ convergence for a single test function. Finally, we need to  handle
all $(m,\varphi)$ simultaneously
and transfer back to the original probability space, by using the fact that  $\tilde\oc= \oc$ in distribution.

  Since the family of all  $(m,\varphi)$ such that \eqref{as31} holds is
uncountable, we cannot take an uncountable union of null sets.  For each $m \in \mathbb{N}$, the space $C^2_b(\pi_m(H))$ is separable
in the $C^1$-norm on bounded sets (since $\pi_m(H)$ is
finite-dimensional), so we may choose a countable dense subset
$\mathcal{C}_m \subset C^2_b(\pi_m(H))$.  Set
$\mathcal{C} \coloneqq \bigcup_{m \geq 1} (\{m\}\times \mathcal{C}_m)$, which is
countable.  Thanks to \eqref{as31}, for each $(m,\varphi) \in \mathcal{C}$ there
is a null set $N_{m,\varphi}$.  The set
\[
  \tilde\Omega_0  \coloneqq
  \tilde\Omega \setminus \bigcup_{(m,\varphi)\,\in\,\mathcal{C}} N_{m,\varphi}
\]
has full $\tilde{\mathbb{P}}$-measure,
and on $\tilde\Omega_0$ we have
\[
  \int_H g_{m,\varphi}(y)\,\tilde\oc(\omega)(dy) = 0
  \qquad
  (m,\varphi) \in \mathcal{C},\
   \ \ \  \omega \in \tilde\Omega_0.
\]

Next, we fix $\omega \in \tilde\Omega_0$, $m \in \mathbb{N}$, and an arbitrary
$\varphi \in C^2_b(\pi_m(H))$, and we choose a sequence $\varphi_k \in
\mathcal{C}_m$, with $\|\nabla\varphi_k - \nabla\varphi\|_\infty :=  \sup_{y\in H} |\nabla\varphi_k(y) - \nabla\varphi(y)|_H\to
0$.  Then for every $y \in H$,
\[
  |g_{m,\varphi_k}(y) - g_{m,\varphi}(y)|
   \leq
  \|\nabla\varphi_k - \nabla\varphi\|_\infty\,
  c\bigl(1 + |y|_H\bigr).
\]
From \eqref{eq:unifestab} and Fatou's lemma $\tilde\oc(\omega)$ has finite first moment $\tilde{\mathbb{P}}$ a.s. and so we can assume without loss of generality
that on the set $\tilde\Omega_0$ this integrability property holds. The dominated
convergence theorem now gives
\[
 \lim_{k\to\infty} \int_H g_{m,\varphi_k}(y)\,\tilde\oc(\omega)(dy)
   =
  \int_H g_{m,\varphi}(y)\,\tilde\oc(\omega)(dy).
\]
The left side is $0$ for every $k$, so the right side is $0$ as well.
Thus, on the single full-measure set $\tilde\Omega_0$,
\[
  \int_H g_{m,\varphi}(y)\,\tilde\oc(\omega)(dy) = 0
  \qquad
  m \in \mathbb{N},\ \ \ \
  \varphi \in C^2_b(\pi_m(H)).
\]

Finally, we define the Borel set
\[
  \mathcal{B}  \coloneqq
  \Bigl\{\mu \in \mathcal{P}(H) :
    \int_H g_{m,\varphi}(y)\,d\mu(y) = 0,\ \ m \in \mathbb{N},\ \  \varphi \in C^2_b(\pi_m(H))\Bigr\}.
\]
To see that $\mathcal{B}$ is Borel, note that for each fixed $(m,\varphi)$ the
map $\mu \mapsto \int_H g_{m,\varphi}\,d\mu$ is Borel measurable on
$(\mathcal{P}(H), d_{\mathrm{BL}})$, so each level set
$\{\mu : \int_H g_{m,\varphi}\,d\mu = 0\}$ is Borel.  The full
condition defining $\mathcal{B}$ ranges over an uncountable family of
$\varphi$'s, but as we have just seen above it is equivalent to requiring
$\int_H g_{m,\varphi}\,d\mu = 0$ only for $(m,\varphi) \in
\mathcal{C}$, so $\mathcal{B}$ is in fact a \emph{countable} intersection of
Borel sets, hence Borel.

In this way, we have established that $\tilde\oc(\omega) \in \mathcal{B}$, for every $\omega
\in \tilde\Omega_0$, where $\tilde{\mathbb{P}}(\tilde\Omega_0) = 1$.
Hence $\tilde{\mathbb{P}}(\tilde\oc \in \mathcal{B}) = 1$, and this implies $\mathbb{P}(\oc \in
\mathcal{B}) = 1$. This is precisely~\eqref{gen-identity-limit}, and the finite first
moment of $\oc$ follows from~\eqref{eq:unifestab} and Fatou's lemma.
\end{proof}

{We are now in a position to prove the main result of the section. Combining the two previous lemmas yields the stabilization estimate. Lemma~\ref{stab-stationarity} identifies every subsequential limit of the occupation measures as a measure satisfying~\eqref{gen-identity-limit}, while Lemma~\ref{flow-invariance} forces any such measure to equal $\delta_0$. Hence the occupation measures themselves converge to $\delta_0$.}

\begin{Lemma}\label{stabilization}
Assume that $\sup_\eps \eps\, |y_\eps|_H^2 < \infty$. Then for every
$\kappa \in (0,1]$
\begin{equation}\label{stab-occupation}
 \lim_{\e\to 0}\  \sup_{t \in [\kappa, 1]}\, \mathbb{E}\, d_{\mathrm{BL}}\!\left(
    \frac{1}{t} \int_0^t \delta_{y_{x_\star}^\eps(s)}\, ds,  \delta_0
  \right)=0.
\end{equation}
\end{Lemma}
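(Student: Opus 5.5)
The argument is by contradiction, combining Lemma~\ref{stab-stationarity} with Lemma~\ref{flow-invariance}. Suppose \eqref{stab-occupation} fails for some $\kappa\in(0,1]$. Then there exist $\theta>0$, a sequence $\eps_n\downarrow 0$ and times $t_n\in[\kappa,1]$ such that
\[
  \mathbb{E}\, d_{\mathrm{BL}}\bigl(\oc^n,\delta_0\bigr)\geq \theta,\qquad n\in\N,
\]
where $\oc^n$ is the occupation measure defined in \eqref{stab-pin} with these $\eps_n$, $t_n$ and the given initial data $y_{\eps_n}$. Taking the supremum over $t$ inside is harmless, since we only need a near-maximizing $t_n$ for each $\eps_n$.

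By Lemma~\ref{stab-stationarity}, which applies because $\sup_\eps\eps|y_\eps|_H^2<\infty$, the family $(\oc^n)$ is tight as $\mathcal P(H)$-valued random variables. Passing to a subsequence, we get $\oc^n\Rightarrow\oc$, and $\oc$ has finite first moment and satisfies \eqref{gen-identity-limit} $\mathbb P$-a.s.

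We then apply Lemma~\ref{flow-invariance} $\omega$-by-$\omega$. On the full-measure event where $\oc(\omega)$ has finite first moment and satisfies \eqref{gen-identity-limit} for all $m$ and $\varphi$, the hypotheses \eqref{gen-identity} hold with $\mu=\oc(\omega)$. Hence $\oc=\delta_0$ a.s., so the limit in distribution is the deterministic point $\delta_0$. This upgrades to $\oc^n\to\delta_0$ in probability in $(\mathcal P(H),d_{\mathrm{BL}})$.

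Finally, the map $\mu\mapsto d_{\mathrm{BL}}(\mu,\delta_0)$ is continuous and bounded by $2$. Therefore $\mathbb{E}\,d_{\mathrm{BL}}(\oc^n,\delta_0)\to d_{\mathrm{BL}}(\delta_0,\delta_0)=0$ along the subsequence, which contradicts the bound $\geq\theta$.

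The main points of care are, first, that Lemma~\ref{stab-stationarity} is stated for arbitrary $t_n\in[\kappa,1]$, which is exactly what allows the supremum over $t$ to be handled. Second, the a.s.\ statement \eqref{gen-identity-limit} must hold simultaneously for all $(m,\varphi)$ on one null-set complement, and this is already provided there. With both in place, no real obstacle remains: the genuine work is contained in the two preceding lemmas.
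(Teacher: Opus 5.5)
Your proposal is correct and follows essentially the same route as the paper: argue by contradiction with a sequence $\eps_n\downarrow 0$ and near-maximizing times $t_n\in[\kappa,1]$, use Lemma~\ref{stab-stationarity} for tightness and for identifying the limit, apply Lemma~\ref{flow-invariance} pathwise to get $\oc=\delta_0$ a.s., and conclude via the bounded continuous map $\mu\mapsto d_{\mathrm{BL}}(\mu,\delta_0)$. The points of care you flag (arbitrary $t_n$, and a single full-measure set covering all $(m,\varphi)$) are exactly the ones the paper's preceding lemmas already handle.
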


\begin{proof}
We fix $\kappa \in (0,1]$ and argue by contradiction, assuming that
\eqref{stab-occupation} fails. Then there
exist $\gamma > 0$, a sequence $\eps_n \downarrow  0$, and times
$t_n \in [\kappa, 1]$ such that, with the random occupation measures
$\oc^n$ defined in \eqref{stab-pin},
\[\inf_{n \in\,\mathbb{N}}\mathbb{E}\, d_{\mathrm{BL}}(\oc^n, \delta_0) \geq \gamma.\]
By Lemma~\ref{stab-stationarity}, the family $(\oc^n)$ is tight; passing to
a subsequence (not relabeled) we may assume $\oc^n \Rightarrow \oc$ for
some $\mathcal{P}(H)$-valued random variable $\oc$, which $\mathbb{P}$-a.s.\ has
finite first moment and satisfies~\eqref{gen-identity-limit}. Applying
Lemma~\ref{flow-invariance} pathwise, with $\mu = \oc(\omega)$ for
$\mathbb{P}$-a.e.\ $\omega$, we conclude that $\oc = \delta_0$,
$\mathbb{P}$-a.s.

This contradicts $\inf_n\mathbb{E}\, d_{\mathrm{BL}}(\oc^n, \delta_0) \geq \gamma$,
since $\oc^n \Rightarrow \oc = \delta_0$ and $d_{\mathrm{BL}}(\cdot,\delta_0)$
is bounded and continuous on $\mathcal{P}(H)$, so
$\mathbb{E}\,d_{\mathrm{BL}}(\oc^n,\delta_0) \to 0$. This
proves~\eqref{stab-occupation} and completes the proof.
\end{proof}}

\section{Laplace principle lower bound}
\label{lowerbound}

In this section we prove the  Laplace lower bound 
\begin{equation}\label{laplace-lower-restate}
  \limsup_{\eps \to 0}\, -\eps\, s^2(\eps) \, \log \mathbb{E}
    \exp\!\Big( -\frac{1}{\epsilon s^2(\epsilon)}F(\nu^\epsilon) \Big)
   \leq  \inf_{\gamma \in \mathcal{P}(H)}
    \bigl( F(\gamma)  +  I(\gamma) \bigr),
\end{equation}
for
every $F \in C_b(\mathcal{P}(H))$.

\subsection{Reduction to a finitely supported $\delta$-optimum}
\label{lb-reduction}

We fix $F \in C_b(\mathcal{P}(H))$ and $\delta > 0$. By the definition of
infimum, we choose $\gamma^* \in \mathcal{P}(H)$ with
\begin{equation}\label{gamma-star-deltaopt}
  F(\gamma^*) + I(\gamma^*)
   \leq  \inf_{\gamma \in \mathcal{P}(H)} \bigl( F(\gamma) + I(\gamma) \bigr) + \delta.
\end{equation}
Since $I(\gamma^*) < \infty$, the measure $\gamma^*$ is concentrated on
\[
 { \mathcal{D}  \coloneqq  \bigl\{ x \in D(A)\ :\ A x \in \mathrm{Range}(Q^{1/2}) \bigr\}.}
\]

Following the discrete approximation strategy used in
~\cite{BudhirajaZoubouloglou2024}, an application of the
Glivenko-Cantelli theorem together with the strong law of large numbers
applied to an i.i.d.\ sequence $\xi_1, \xi_2, \ldots$ distributed as
$\gamma^*$ yields, almost surely along the realization,
\[
 \lim_{n\to \infty} \frac{1}{n} \sum_{i=1}^n \delta_{\xi_i}  = \gamma^*,\ \ \ \ \ 
  \mbox{ weakly in } \mathcal{P}(H),
\]
{
Thus there is a measurable $\Omega_0 \subset \Omega$ with $\mathbb{P}(\Omega_0)=1$ such that for all $\omega \in \Omega_0$, \[\lim_{n\to\infty}\frac{1}{n} \sum_{i=1}^n \delta_{\xi_i(\omega)}  = \gamma^*\] in the weak convergence topology.
Since, $\gamma^*(\mathcal{D})=1$, we can assume without loss of generality that for all $\omega \in \Omega_0$, $\xi(\omega) \in \mathcal{D}$. Indeed, fix any element $x^* \in \mathcal{D}$ and define \[\tilde \xi_i(\omega) = \xi_i(\omega) 1_{\{\xi_i(\omega) \in \mathcal{D}\}} + x^* 1_{\{\xi_i(\omega) \in \mathcal{D}^c\}}.\] It is then easy to see that  $\frac{1}{n} \sum_{i=1}^n \delta_{\tilde \xi_i(\omega)}  \to \gamma^*$. 
Furthermore, by intersection with another set with full measure, we may assume without loss of generality that for all $\omega \in \Omega_0$, }
\[
\lim_{n\to \infty}   \frac{1}{n} \sum_{i=1}^n \bigl| Q^{-1/2}A\xi_i(\omega) + Q^{1/2}\nabla U(\xi_i(\omega)) \bigr|_H^2=  \int_H \bigl| Q^{-1/2} A x + Q^{1/2} \nabla U(x) \bigr|_H^2\, \gamma^*(dx)
   =  2 I(\gamma^*).
\]

Now, we fix a realization {$\tilde\omega \in \Omega_0$ so that for this $\tilde \omega$} both convergences above hold,
denote $x_i \coloneqq \xi_i(\tilde\omega) \in \mathcal{D}$, and choose
$k \in \mathbb{N}$ large enough so that the  measure
\begin{equation}\label{tilde-mu-def}
  \tilde\mu  \coloneqq  \frac{1}{k} \sum_{i=1}^k \delta_{x_i}
   =  \sum_{i=1}^k p_i\, \delta_{x_i},
  \qquad p_i \coloneqq \frac{1}{k}, \quad i = 1, \ldots, k,
\end{equation}
after possibly relabelling the $x_i$ to be distinct and
adjusting the weights $p_i$ accordingly, satisfies 
\begin{equation}\label{tilde-mu-deltaopt}
  F(\tilde \mu) + \frac{1}{2}\int_H \bigl| Q^{-1/2}A x + Q^{1/2}\nabla U(x) \bigr|_H^2\, \tilde\mu(dx)
  \leq  F(\gamma^*) + I(\gamma^*) + 2\delta
   \leq 
  \inf_{\gamma \in \mathcal{P}(H)} \bigl( F(\gamma) + I(\gamma) \bigr) + 3 \delta.
\end{equation}
In the sequel we work with the partition of $[0,1]$ associated with
$\tilde\mu$. Namely, we will set
\[P_0 \coloneqq 0,\ \ \ \ \ 
  P_i  \coloneqq  \sum_{j=1}^i p_j, \qquad i = 1, \ldots, k.
\]

\subsection{Construction of the controlled process}
\label{lb-control-construction}
We now construct, for each $\eps \in (0,1)$, an admissible control
$\varphi^\eps \in \mathcal{A}$ such that the associated controlled
process $\bar v^\eps$, solving~\eqref{veps-controlled}, approximates,
in occupation measure, the discrete target $\tilde\mu$ introduced in
Subsection~\ref{lb-reduction} and satisfying \eqref{tilde-mu-deltaopt}. 

Fix {two points $y_0 \in H$ and $y_1 \in \mathcal{D}$} and let $\eta \coloneqq y_1 - e^{A} y_0$.
For each {$\eps \in (0,1)$} we define the deterministic path
$f^\eps_{y_0, y_1} : {[0, \eps)} \to H$ by its expansion in the
eigenbasis $ (e_k )_{k \in\,\mathbb{N}}$,
\begin{equation}\label{travel-control-def}
  f^\eps_{y_0, y_1}(r)  \coloneqq  \sum_{k=1}^\infty c^\eps_k(y_0, y_1)\,
    e^{-\alpha_k(\eps - r)/\eps}\, e_k,
  \qquad r \in [0, \eps),
\end{equation}
with coefficients
\begin{equation}\label{travel-control-coeffs}
  c^\eps_k(y_0, y_1)  \coloneqq  \frac{2 \alpha_k}{1 - e^{-2 \alpha_k}}\,
    \eta_k,
  \qquad
  \eta_k  \coloneqq  \langle y_1 - e^{A} y_0,\, e_k \rangle_H.
\end{equation}
In particular, it is easy to check that
\begin{equation}\label{travel-design}
  \frac{1}{\eps} \int_{0}^{\eps} e^{(\eps - r) A / \eps}\,
    f^\eps_{y_0, y_1}(r)\, dr  =  y_1 - e^{A} y_0,
\end{equation}
and a short computation yields
\begin{equation}\label{travel-cost}
  \int_0^\eps \bigl| f^\eps_{y_0, y_1}(r) \bigr|_H^2\, dr
   =  \sum_{k=1}^\infty |\eta_k|^2 \, \frac{2 \alpha_k\, \eps}{1 - e^{-2 \alpha_k}}
   \leq  {c}\eps\, \bigl| (-A)^{1/2}(y_1 - e^{A} y_0) \bigr|_H^2.
\end{equation}

Now, we define, for $r \in [0,\eps)$ 
\begin{equation}\label{g-def}
  g^\eps_{y_0, y_1}(r)  \coloneqq  Q^{-1/2} f^\eps_{y_0, y_1}(r)
    =  \sum_{k=1}^\infty c^\eps_k(y_0, y_1)\, q_k^{-1/2}\,
   e^{-\alpha_k(\eps - r)/\eps}\, e_k.
\end{equation}
Notice that  $g^\eps_{y_0, y_1} \in\,L^2(0,\eps; H)$, as{}
\begin{equation}\label{travel-cost-CM}
  \int_0^\eps \bigl| g^\eps_{y_0, y_1}(r) \bigr|_H^2\, dr
   \leq  {c}\eps\, \bigl| Q^{-1/2}(-A)^{1/2}(y_1 - e^{A} y_0) \bigr|_H^2,
\end{equation}
which is finite for $y_0, y_1 \in \mathcal{D}$ {by Hypothesis \ref{H1}(2)}.

Construction \ref{cons-A} below gives the control $\varphi^{\eps}$ and the associated controlled process $\bar v^{\eps}$ that will be used in the 
proof of the lower bound.

\begin{construction}
\label{cons-A}
{\em Fix $\eps>0$ small enough that $\eps<\min_{1\leq j\leq k}p_j$.
We define the pair $(\bar v^\eps,\varphi^\eps)$ recursively on the
intervals
\[
  T_i^\eps:=(P_i,P_i+\eps],
  \qquad
  H_i^\eps:=(P_i+\eps,P_{i+1}],
  \qquad i=0,\ldots,k-1 .
\]
Set $\bar v^\eps(0)=u_0$.  Suppose that $\bar v^\eps$ has already been
constructed up to time $P_i$, and let
\[
  Y_i^\eps:=\bar v^\eps(P_i).
\]

{\bf The travel-phase control.} On the travel interval $T_i^\eps$, we first define $\bar v^\eps$ as the
unique mild solution of
\begin{equation}\label{eq:tser}
  d\bar v^\eps(t)
  =
  \frac1\eps
  \Bigl[
      A\bar v^\eps(t)
      +
      f^\eps_{Y_i^\eps,x_{i+1}}(t-P_i)
      +
      R_i^\eps(t,\bar v^\eps(t))
  \Bigr]dt
  +
  \frac{s(\eps)}{\sqrt\eps}\,dw^\eps(t),  \ \ \ \ \ 
  \bar v^\eps(P_i)=Y_i^\eps ,
\end{equation}
where
\[
  R_i^\eps(t,z)
  :=
  (B_\eps^{1/2}-Q^{1/2})
  \left(
      g^\eps_{Y_i^\eps,x_{i+1}}(t-P_i)
      -
      Q^{1/2}\nabla U(z)
  \right).
\]
After this state process has been constructed on $T_i^\eps$, identify the
control on $T_i^\eps$ by the feedback formula
\begin{equation}\label{travel-feedback-recursive}
  \varphi^\eps(t)
  :=
  g^\eps_{Y_i^\eps,x_{i+1}}(t-P_i)
  -
  Q^{1/2}\nabla U(\bar v^\eps(t)),
  \qquad t\in T_i^\eps .
\end{equation}
Then
\[
  B_\eps^{1/2}\varphi^\eps(t)
  =
  f^\eps_{Y_i^\eps,x_{i+1}}(t-P_i)
  -
  Q\nabla U(\bar v^\eps(t))
  +
  R_i^\eps(t,\bar v^\eps(t)),
\]
and hence \eqref{eq:tser} is precisely the
controlled equation \eqref{veps-controlled} with the above choice of control $\varphi^{\eps}$ on $T_i^\eps$. This is discussed further below this construction 
(see \eqref{tcpc}).

Next define
\begin{equation}\label{eq:hpc1}
  h_{i+1}:=
  Q^{-1/2}Ax_{i+1}+Q^{1/2}\nabla U(x_{i+1}) .
\end{equation}
{\bf The hold-phase control.} On the interval $H_i^\eps$, set
\begin{equation}\label{eq:hfr}
  \varphi^\eps(t):=-h_{i+1},
  \qquad t\in H_i^\eps ,
\end{equation}
and define $\bar v^\eps$ as the unique mild solution of
\begin{equation}\label{hold-state-equation-recursive}
  d\bar v^\eps(t)
  =
  \frac1\eps
  \Bigl[
      A\bar v^\eps(t)
      +
      Q\nabla U(\bar v^\eps(t))
      -
      B_\eps^{1/2}h_{i+1}
  \Bigr]dt
  +
  \frac{s(\eps)}{\sqrt\eps}\,dw^\eps(t), \ \ \ \ \   \bar v^\eps(P_i+\eps)=\bar v^\eps((P_i+\eps)-).
\end{equation}
Equivalently,
\begin{equation}\label{eq:conshold}
  B_\eps^{1/2}\varphi^\eps(t)
  =
  -\bigl(Ax_{i+1}+Q\nabla U(x_{i+1})\bigr)
  +
  \mathfrak r^\eps_{i+1},
  \qquad
  \mathfrak r^\eps_{i+1}:=
  (Q^{1/2}-B_\eps^{1/2})h_{i+1}.
\end{equation}

Proceeding in this way for $i=0,\ldots,k-1$ defines
$(\bar v^\eps,\varphi^\eps)$ on all of $[0,1]$. }  \hfill \qed
\end{construction}



$\,$\\

The equation in \eqref{eq:tser} is indeed well posed, since from \eqref{travel-cost} and \eqref{g-def}, conditional on
$\mathcal F_{P_i}$, the functions
\[
        s\mapsto f^\eps_{Y_i^\eps,x_{i+1}}(s-P_i),
        \qquad
        s\mapsto g^\eps_{Y_i^\eps,x_{i+1}}(s-P_i)
\]
belong to $L^2(P_i,P_i+\eps;H)$ a.s.  Moreover, since $\nabla U$ is globally
Lipschitz,
\begin{equation}\label{eq:rilip}
\begin{aligned}
  |R_i^\eps(t,z)-R_i^\eps(t,z')|_H
  &\leq
  \|B_\eps^{1/2}-Q^{1/2}\|_{\mathcal L(H)}
  \|Q^{1/2}\|_{\mathcal L(H)}
  [\nabla U]_{\mathrm{Lip}}
  |z-z'|_H .
\end{aligned}
\end{equation}
Thus $z\mapsto R_i^\eps(t,z)$ is globally Lipschitz, uniformly in $t$.
It also has at most linear growth.  Hence standard results (see e.g. 
\cite[Theorem 7.4]{DPZ}) give a unique adapted mild solution
of \eqref{eq:tser} on $[P_i,P_i+\eps]$.
The mild form of \eqref{eq:tser} is
\begin{equation}\label{travel-state-mild}
\begin{aligned}
  \bar v^\eps(t)
  &=
  e^{(t-P_i)A/\eps}Y_i^\eps +
  \frac1\eps
  \int_{P_i}^t
  e^{(t-s)A/\eps}
  \Bigl[
      f^\eps_{Y_i^\eps,x_{i+1}}(s-P_i)
      +
      R_i^\eps(s,\bar v^\eps(s))
  \Bigr]\,ds     \\[8pt]
  &\quad+
  \frac{s(\eps)}{\sqrt\eps}
  \int_{P_i}^t e^{(t-s)A/\eps}\,dw^\eps(s),
  \qquad  t\in[P_i,P_i+\eps].
\end{aligned}
\end{equation}

Also, with $\varphi^{\eps}$ defined on $T_i^{\eps}$ as in \eqref{travel-feedback-recursive},  
\begin{equation} \label{tcpc}
\begin{aligned}
  &B_\eps^{1/2}\varphi^\eps(t)
  =
  B_\eps^{1/2}
  \left[
      g^\eps_{Y_i^\eps,x_{i+1}}(t-P_i)
      -
      Q^{1/2}\nabla U(\bar v^\eps(t))
  \right]                                      \\[8pt]
  &\quad =
  Q^{1/2}g^\eps_{Y_i^\eps,x_{i+1}}(t-P_i)
  -
  Q\nabla U(\bar v^\eps(t))                    +
  (B_\eps^{1/2}-Q^{1/2})
  \left[
      g^\eps_{Y_i^\eps,x_{i+1}}(t-P_i)
      -
      Q^{1/2}\nabla U(\bar v^\eps(t))
  \right]\\[8pt]
  &\quad \quad \quad \quad =
  f^\eps_{Y_i^\eps,x_{i+1}}(t-P_i)
  -
  Q\nabla U(\bar v^\eps(t))
  +
  R_i^\eps(t,\bar v^\eps(t)).
\end{aligned}
\end{equation}

Substituting \eqref{tcpc} in \eqref{eq:tser}
\[
  d\bar v^\eps(t)
  =
  \frac1\eps
  \bigl(
      A\bar v^\eps(t)
      +
      Q\nabla U(\bar v^\eps(t))
  \bigr)dt
  +
  \frac1\eps B_\eps^{1/2}\varphi^\eps(t)\,dt
  +
  \frac{s(\eps)}{\sqrt\eps}\,dw^\eps(t).
\]
Thus $\bar v^{\eps}$ defines the controlled process given by \eqref{veps-controlled} with the associated control $\varphi^{\eps}$ (given in feedback form) defined by 
\eqref{travel-feedback-recursive}.


Regarding the hold-phase control, note that
since $x_{i+1} \in \mathcal{D}$, the vector $h_{i+1}$ defined in \eqref{eq:hpc1}
belongs to $H$
and thus the control $\varphi^\epsilon$ in \eqref{eq:hfr} is well defined and
for every $\epsilon \in\,(0,1)$ and $t \in H_i^{\eps}$,
\begin{equation}\label{hold-CM-norm}
  | \varphi^\eps(t) |_H^2 = \bigl| Q^{-1/2} A x_{i+1} + Q^{1/2} \nabla U(x_{i+1}) \bigr|_H^2,
  \qquad t \in\,(P_i+\epsilon,P_{i+1}].
\end{equation}
Moreover, from the strong convergence of $B_{\eps}$ to $Q$, the residual $\mathfrak{r}^\eps_{i+1}$ defined in \eqref{eq:conshold}
satisfies $|\mathfrak{r}^\eps_{i+1}|_H \to 0$, as $\eps\downarrow 0$. 


Since $Y_i^\eps$ is
$\mathcal F_{P_i}$-measurable and the maps in
\eqref{travel-feedback-recursive} and \eqref{eq:hfr} are
adapted feedback functions of the already constructed state, $\varphi^\eps$ is
progressively measurable.  Moreover, by the Lipschitz property of
$\nabla U$, the estimates \eqref{travel-cost-CM} and \eqref{hold-CM-norm},
and the moment estimate proved below in Lemma \ref{lb-moment}, $\varphi^\eps\in \mathcal{A}$.
  Thus $\varphi^\eps$ is an admissible control and $\bar v^\eps$
solves \eqref{veps-controlled} on the whole interval $[0,1]$.

\begin{Remark}\label{design-strategy}
{\em
The design of the control may be summarized as follows.

\smallskip

On a hold interval $(P_i + \eps, P_{i+1}]$, the
Cameron-Martin perturbation $B_\eps^{1/2}\varphi^\eps(t)$ exactly cancels
(up to a vanishing residual) the deterministic drift
$A x_{i+1} + Q\, \nabla U(x_{i+1})$. The shifted process
$z^\eps(t) \coloneqq \bar v^\eps(P_i + \eps + t) - x_{i+1}$ then satisfies,
up to the residual $\mathfrak{r}^\eps_{i+1}$,
\[
  d z^\eps  =  \frac{1}{\eps}\,\left(A z^\epsilon(t)+ \mathcal{V}_{x_{i+1}}(z^\eps)\right)\, dt
     +  \frac{s(\eps)}{\sqrt{\eps}}\, dw^\eps,
\]
so that the stabilization Lemma~\ref{stabilization} applies with
$x_* = x_{i+1}$ and $y_\eps = \bar v^\eps(P_i + \eps) - x_{i+1}$. The
cost of this control on $(P_i + \eps, P_{i+1}]$ is, by~\eqref{hold-CM-norm},
exactly
\[
  \frac{1}{2}\int_{P_i + \eps}^{P_{i+1}} |\varphi^\eps(t)|_H^2\, dt
   =  \frac{1}{2}\, (p_{i+1} - \eps)\, \bigl| Q^{-1/2}A x_{i+1} + Q^{1/2} \nabla U(x_{i+1}) \bigr|_H^2,
\]
which converges, as $\eps\downarrow 0$, to the contribution
$\frac{1}{2} p_{i+1}\,|Q^{-1/2}A x_{i+1} + Q^{1/2} \nabla U(x_{i+1})|_H^2$
of $x_{i+1}$ to $I(\tilde\mu)$.
\smallskip

On a travel interval $(P_i, P_i + \eps]$, the
Cameron-Martin perturbation has two pieces. The first piece,
$f^\eps_{\bar v^\eps(P_i), x_{i+1}}$, drives $\bar v^\eps$ from its
initial state to $x_{i+1}$ in time $\eps$ by following the explicit
deterministic path according to the drift~\eqref{travel-control-def};
and its squared $L^2$-norm is $O(\eps)$ by~\eqref{travel-cost}. The
second piece, $-Q\, \nabla U(\bar v^\eps(t))$, cancels the nonlinear drift
during the rapid travel, so that the effective dynamics on the
travel interval reduce to the linear case (up to a negligible term). Its squared $L^2$-norm over a travel interval is also
$O(\eps)$ by the moment bounds from Lemma \ref{lb-moment} below.
\smallskip

Finally, since  there are $k$ travel intervals,
each contributing $O(\eps)$,  the total travel cost is $O(k\,\eps) = O(\eps)$, hence negligible for the overall cost of the control.

}
\end{Remark}

\subsection{Convergence of the controlled empirical measures}
\label{lb-convergence}

The next three lemmas establish the moment bounds and the convergence of
$\bar v^\eps$ to the desired states on the travel and hold phases.

\begin{Lemma}\label{lb-moment}
With $\bar v^\eps$ as constructed above,
\[
  \sup_{\eps \in (0,1)}\, \sup_{0 \leq t \leq 1} \mathbb{E}\,|\bar v^\eps(t)|_H^2
   <  \infty.
\]
\end{Lemma}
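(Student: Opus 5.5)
We argue by induction over the $2k$ phases of Construction~\ref{cons-A}, tracking $m_i^\eps:=\mathbb{E}|Y_i^\eps|_H^2$ and bounding $\sup_t \mathbb{E}|\bar v^\eps(t)|_H^2$ on each phase by a constant times $1+m_i^\eps$ (plus constants depending only on the fixed targets $x_1,\dots,x_k$). Since $k$ is fixed, finitely many such steps give a bound uniform in $\eps$.

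\emph{Hold phase.} On $H_i^\eps$ the control is the constant $-h_{i+1}\in H$. We write $z^\eps(t)=\bar v^\eps(P_i+\eps+t)-x_{i+1}$. This is a solution of the shifted equation \eqref{stab-sde} with $x_*=x_{i+1}$, plus an extra constant forcing $\eps^{-1}\mathfrak r^\eps_{i+1}$, and $|\mathfrak r^\eps_{i+1}|_H\le 2c_B|h_{i+1}|_H$ is bounded. We repeat the proof of Lemma~\ref{stab-bounds}: we decompose into the stochastic convolution $Z^\eps$ and the remainder $\Phi^\eps$. Using \eqref{shifted-dissipativity} gives $\frac{d}{dt}|\Phi^\eps|^2+\frac{\omega}{\eps}|\Phi^\eps|^2\le \frac c\eps(|Z^\eps|^2+|\mathfrak r^\eps_{i+1}|^2)$, so
\[
\mathbb{E}|\Phi^\eps(t)|_H^2\le \mathbb{E}|z^\eps(0)|_H^2+c\sup_s\mathbb{E}|Z^\eps(s)|_H^2+c|h_{i+1}|_H^2 .
\]
Together with \eqref{as4} for $\beta=0$, this bounds the second moment on $H_i^\eps$ by $c(1+\mathbb{E}|\bar v^\eps(P_i+\eps)|_H^2)$.

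\emph{Travel phase.} We use the mild form \eqref{travel-state-mild}. The first term has norm at most $|Y_i^\eps|_H$. By \eqref{travel-design}, the $f$-term is exactly $y_1-e^{A}y_0$ at time $P_i+\eps$; at intermediate times we bound it coefficientwise. The $k$-th coefficient equals $\eta_k\,\frac{2\alpha_k}{1-e^{-2\alpha_k}}\int_0^{\tau}e^{-\alpha_k(\tau-r)}e^{-\alpha_k(1-r)}dr$ with $\tau=(t-P_i)/\eps\in[0,1]$, and this is at most $c|\eta_k|$. Hence the $f$-term has norm at most $c|x_{i+1}-e^{A}Y_i^\eps|_H\le c(|x_{i+1}|_H+|Y_i^\eps|_H)$. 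This coefficientwise estimate is the key point: it avoids the $(-A)^{1/2}$ norm in \eqref{travel-cost}, which would require regularity of the random point $Y_i^\eps$. The residual $R_i^\eps$ splits into a part $(B_\eps^{1/2}-Q^{1/2})g^\eps$, which is harmless since it equals $B_\eps^{1/2}Q^{-1/2}f^\eps-f^\eps$, and a Lipschitz part in $\bar v^\eps$ with constant $\ell:=\|B_\eps^{1/2}-Q^{1/2}\|\,\|Q^{1/2}\|[\nabla U]_{\rm Lip}$. The Lipschitz part is handled by Gronwall over a time interval of length $\eps$ (in the $\eps^{-1}$ scaling, i.e. unit rescaled time), using $\|e^{tA/\eps}\|\le1$. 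The stochastic term is bounded via \eqref{as4}.

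\emph{Main obstacle.} The delicate piece is the term $B_\eps^{1/2}Q^{-1/2}f^\eps$. Since $B_\eps$ commutes with $A$ and $Q$, its $k$-th coefficient carries the factor $b_k^{1/2}q_k^{-1/2}$, which need not be bounded uniformly in $k$ and $\eps$. We would try first to assume, or extract from the structure of Remark~\ref{QAcommute}, that $B_\eps\le cQ$. If that is unavailable, we fall back on a Hilbert-space (Jensen) bound on the mild integral,
\[
\Bigl|\eps^{-1}\int e^{(t-s)A/\eps}B_\eps^{1/2}g^\eps\,ds\Bigr|^2\le \eps^{-1}c_B^2\int_0^\eps|g^\eps|^2\,dr .
\]
By \eqref{travel-cost-CM} the right side is controlled by $|Q^{-1/2}(-A)^{1/2}(x_{i+1}-e^{A}Y_i^\eps)|^2$, and Hypothesis~\ref{H1}.2 bounds $Q^{-1/2}(-A)^{1/2}e^{A}$, so this is at most $c(1+|Y_i^\eps|^2+|Q^{-1/2}(-A)^{1/2}x_{i+1}|^2)$, which is finite since $x_{i+1}\in\mathcal D$. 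Either route gives $\sup_{t\in T_i^\eps}\mathbb{E}|\bar v^\eps(t)|^2\le c(1+m_i^\eps)$. The bounds from the two phases then combine inductively, starting from $m_0^\eps=|u_0|^2$.
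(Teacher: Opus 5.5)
Your proposal is correct and follows essentially the paper's route: induction over the travel/hold phases; on travel intervals, the mild form with contractivity, Gronwall over the length-$\eps$ window and \eqref{as4}; on hold intervals, a Lemma~\ref{stab-bounds}-type energy estimate. Your explicit treatment of the constant forcing $\eps^{-1}\mathfrak r^\eps_{i+1}$ there is in fact more careful than the paper, which applies \eqref{stab-moment} as if $z^\eps$ solved \eqref{stab-sde} exactly.

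Two side remarks are off but harmless. First, the cost bound \eqref{travel-cost} needs no regularity of $Y_i^\eps$, because $(-A)^{1/2}e^{A}$ is bounded; this is exactly how the paper uses it. Second, the hedge about assuming $B_\eps\le cQ$ is unnecessary, since your Cauchy--Schwarz bound via \eqref{travel-cost-CM} and Hypothesis~\ref{H1}.2 already closes the argument without it.
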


{
\begin{proof}
As in Lemma~4.6 of~\cite{BudhirajaZoubouloglou2024}, we show by induction
on $i=0,1,\ldots,k$ that
\begin{equation}\label{lb-moment-induction}
  M_i \;\coloneqq\; \sup_{\eps\in(0,1)}\ \sup_{0\leq t\leq P_i}
     \mathbb{E}\,\bigl|\bar v^\eps(t)\bigr|_H^2 \;<\;\infty .
\end{equation}
For $i=0$ this holds since $P_0=0$ and $\bar v^\eps(0)=u_0$ is fixed. Assume
\eqref{lb-moment-induction} for some $i\in\{0,\ldots,k-1\}$. In particular
$\sup_\eps\mathbb{E}\,|Y_i^\eps|_H^2\leq M_i<\infty$, where we recall  $Y_i^\eps=\bar v^\eps(P_i)$.

\smallskip
On the interval $T_i^\eps=(P_i,P_i+\eps]$, from the mild
form~\eqref{travel-state-mild} and the contractivity
$\|e^{rA/\eps}\|_{\mathcal L(H)}\leq 1$, we get
\[ 
  |\bar v^\eps(t)|_H
  \leq |Y_i^\eps|_H
    + \frac1\eps \int_{P_i}^{t}\! |f^\eps_{Y_i^\eps,x_{i+1}}(s-P_i)|_H\,ds
    + \frac1\eps \int_{P_i}^{t}\! |R_i^\eps(s,\bar v^\eps(s))|_H\,ds
    + |Z^\eps(t)|_H,
\]
where $Z^\eps$ is the stochastic convolution in~\eqref{eq:stoccon}.
For the forcing term, Cauchy-Schwarz and the cost bound~\eqref{travel-cost} give
\[
  \frac1\eps\!\int_{P_i}^{t}\!|f^\eps_{Y_i^\eps,x_{i+1}}(s-P_i)|_H\,ds
  \leq \eps^{-1/2}\Bigl(\,\int_0^\eps |f^\eps_{Y_i^\eps,x_{i+1}}(r)|_H^2\,dr\Bigr)^{1/2}
  \leq c\,\bigl|(-A)^{1/2}\bigl(x_{i+1}-e^{A}Y_i^\eps\bigr)\bigr|_H .
\]
Since $x_{i+1}\in\mathcal D\subset D((-A)^{1/2})$ and analytic smoothing yields
\[\|(-A)^{1/2}e^{A}\|_{\mathcal L(H)}=\sup_k\alpha_k^{1/2}e^{-\alpha_k}<\infty,\]
this is bounded by $c\,(1+|Y_i^\eps|_H)$.

For the residual $R_i^\eps$, the Lipschitz property  in \eqref{eq:rilip}, together with Cauchy-Schwarz inequality gives,
%
\[
\begin{aligned}
  \frac1{\eps^2}\,\mathbb E\Bigl(\,\int_{P_i}^{t}|R_i^\eps(s,\bar v^\eps(s))|_H\,ds\Bigr)^{2}
	&\leq \frac{c}{\eps^2} \,\mathbb E\Bigl(\,\int_{P_i}^{t} |\bar v^\eps(s)|_H + |R_i^\eps(s,0)|_H\,ds\Bigr)^{2}\\
	&\leq c \left( (1+ \mathbb E|Y^\eps_i|_H^2) + \frac{1}{\eps} \int_{P_i}^{t} \mathbb E|\bar v^\eps(s)|_H^2\,ds\right),
	\end{aligned}
\]
where the last inequality uses the definition of $R_i^{\eps}(s,0)$ and the estimate
$$
\mathbb E\left(\sup_{0\leq s \leq 1}| g^\eps_{Y_i^\eps,x_{i+1}}(s)|^2\right) \leq c\, \mathbb E(1 + |Y^\eps_i|_H^2).$$
Finally, by the stochastic-convolution
estimate in \eqref{as4} (with $\beta=0$)
\[
 \sup_{\epsilon\in\,(0,1)}\sup_{t \in\,[0,1]} \mathbb E|Z^\eps(t)|_H^2
  \leq c.
\]

 Collecting the four terms and applying Gr\"onwall over
$[P_i,P_i+\eps]$  yields
\[
  \sup_{P_i\leq t\leq P_i+\eps}\mathbb E|\bar v^\eps(t)|_H^2
  \;\leq\; c\bigl(1+\mathbb E|Y_i^\eps|_H^2\bigr)
  \;\leq\; c\,(1+M_i)\;<\;\infty .
\]

\smallskip
On the interval $H_i^\eps=(P_i+\eps,P_{i+1}]$, by construction the shifted
process \[z^\eps(t)\coloneqq\bar v^\eps(P_i+\eps+t)-x_{i+1},\ \ \ \ \ 0\leq t\leq p_{i+1}-\eps,\] is the mild solution of the stabilization
SDE~\eqref{stab-sde}, with $x_*=x_{i+1}$ and initial datum
$y_\eps=\bar v^\eps(P_i+\eps)-x_{i+1}$, driven by the increments of $w^\eps$
after $P_i+\eps$, which are independent of $\mathcal F_{P_i+\eps}$. Conditioning
on $\mathcal F_{P_i+\eps}$ and applying the a-priori bound~\eqref{stab-moment} of
Lemma~\ref{stab-bounds}, whose constant $\kappa_1$ is independent of $x_*$,
\[
  \sup_{0\leq t\leq 1}\mathbb E|z^\eps(t)|_H^2
  \leq \kappa_1\bigl(1+\mathbb E|\bar v^\eps(P_i+\eps)-x_{i+1}|_H^2\bigr).
\]
Hence, using the travel-phase bound at $t=P_i+\eps$,
\[
  \sup_{P_i+\eps\leq t\leq P_{i+1}}\mathbb E|\bar v^\eps(t)|_H^2
  \leq 2|x_{i+1}|_H^2
    + 2\kappa_1\bigl(1+\mathbb E|\bar v^\eps(P_i+\eps)-x_{i+1}|_H^2\bigr) \leq \blue{c(1+M_i)}
  \;<\;\infty .
\]
Combining the two phases gives $M_{i+1}<\infty$, completing the induction.
Taking $i=k$, so that  $P_k=1$, yields the claim.
\end{proof}}

\begin{Lemma}\label{lb-endpoint}
For each $i = 0, 1, \ldots, k - 1$,
\begin{equation}\label{stab-initial}
 \lim_{\e\to 0} \eps\,\mathbb{E}\,|\bar v^\eps(P_i + \eps) - x_{i+1}|_H^2
  =0.
\end{equation}
\end{Lemma}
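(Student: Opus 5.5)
We will use the mild form \eqref{travel-state-mild} at $t=P_i+\eps$, together with the design identity \eqref{travel-design}. Substituting $r=s-P_i$, the forcing term is
\[
\frac1\eps\int_0^\eps e^{(\eps-r)A/\eps}f^\eps_{Y_i^\eps,x_{i+1}}(r)\,dr=x_{i+1}-e^{A}Y_i^\eps .
\]
The free evolution of the initial datum contributes $e^{\eps A/\eps}Y_i^\eps=e^{A}Y_i^\eps$, so these two pieces combine to give exactly $x_{i+1}$. Hence
\[
\bar v^\eps(P_i+\eps)-x_{i+1}=\frac1\eps\int_{P_i}^{P_i+\eps}e^{(P_i+\eps-s)A/\eps}R_i^\eps(s,\bar v^\eps(s))\,ds+\tilde Z^\eps_i ,
\]
where $\tilde Z^\eps_i:=\frac{s(\eps)}{\sqrt\eps}\int_{P_i}^{P_i+\eps}e^{(P_i+\eps-s)A/\eps}\,dw^\eps(s)$. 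It then suffices to show that each of the two terms has second moment of order $o(1/\eps)$; in fact we expect both to be bounded, up to one $o(1)$ contribution.

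For the stochastic term, the It\^o isometry gives
\[
\mathbb E|\tilde Z^\eps_i|_H^2=s^2(\eps)\int_0^1\mathrm{tr}\bigl(e^{rA}B_\eps e^{rA}\bigr)\,dr ,
\]
which is bounded uniformly in $\eps$ by \eqref{as1}, exactly as in \eqref{as4} with $\beta=0$. Multiplying by $\eps$, this contribution tends to $0$.

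For the residual we use contractivity of the semigroup and Cauchy-Schwarz:
\[
\mathbb E\Bigl|\frac1\eps\int R_i^\eps\Bigr|_H^2\le\frac1\eps\int_{P_i}^{P_i+\eps}\mathbb E|R_i^\eps(s,\bar v^\eps(s))|_H^2\,ds .
\]
We split $R_i^\eps(s,z)=(B_\eps^{1/2}-Q^{1/2})g^\eps(s-P_i)-(B_\eps^{1/2}-Q^{1/2})Q^{1/2}\nabla U(z)$ and treat the two pieces separately.

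\emph{The $\nabla U$ piece.} Since $\|B_\eps^{1/2}-Q^{1/2}\|_{\mathcal L(H)}\le c$ by \eqref{CB} and $\nabla U$ is Lipschitz with $\nabla U(0)=0$, this piece is bounded by $c\,|\bar v^\eps(s)|_H$. Lemma~\ref{lb-moment} then bounds its contribution by a constant.

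\emph{The $g^\eps$ piece.} This is the delicate step, since $\sup_r|g^\eps(r)|_H$ is not obviously bounded. We plan to avoid sup bounds and use only the $L^2$ cost. By the bound $\|B_\eps^{1/2}-Q^{1/2}\|\le c$ together with \eqref{travel-cost-CM}, the quantity $\frac1\eps\int_0^\eps|g^\eps(r)|_H^2\,dr$ is at most $c\,|Q^{-1/2}(-A)^{1/2}(x_{i+1}-e^AY_i^\eps)|_H^2$. We split this as follows.
\begin{itemize}
\item The $x_{i+1}$ part is finite because $x_{i+1}\in\mathcal D$. Indeed, $(-A)^{1/2}x$ lies in the range of $Q^{1/2}$ whenever $Ax$ does, because $(-A)^{-1/2}$ is bounded and commutes with $Q^{1/2}$.
\item The $e^AY_i^\eps$ part is bounded by $\|Q^{-1/2}(-A)^{1/2}e^A\|_{\mathcal L(H)}|Y_i^\eps|_H$. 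This operator norm is finite by Hypothesis~\ref{H1}.2: factor $e^{A}=e^{A/2}e^{A/2}$ and use that $Q^{-1}Ae^{tA}$ is bounded. Strictly, boundedness of $Q^{-1/2}(-A)^{1/2}e^{A}$ requires $\sup_k q_k^{-1/2}\alpha_k^{1/2}e^{-\alpha_k}<\infty$, which follows from $\sup_k q_k^{-1}\alpha_k e^{-\alpha_k t}<\infty$ by taking square roots.
\end{itemize}
Combining these with Lemma~\ref{lb-moment}, the $g^\eps$ piece is bounded by $c(1+\mathbb E|Y_i^\eps|_H^2)\le c$.

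Altogether, $\eps\,\mathbb E|\bar v^\eps(P_i+\eps)-x_{i+1}|_H^2\le c\,\eps\to0$, which proves \eqref{stab-initial}. The main obstacle is the $g^\eps$ term in the residual. It is resolved by the observation above that only integrated $L^2$ bounds on $g^\eps$ are needed; these come from \eqref{travel-cost-CM} and the boundedness of $Q^{-1/2}(-A)^{1/2}e^{A}$, so no pointwise control of $g^\eps$ is required.
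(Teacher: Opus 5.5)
Your proposal is correct and follows the paper's proof. You use the same decomposition (mild form plus the design identity) into the residual convolution and the stochastic convolution. The stochastic term is handled by the It\^o isometry and \eqref{as1}, and the residual by Cauchy--Schwarz together with the $L^2$ travel-cost bound \eqref{travel-cost-CM} and Lemma~\ref{lb-moment}. Your use of plain contractivity instead of the $e^{-\lambda(\cdot)/\eps}$ decay only changes the constant, and your explicit justification that $Q^{-1/2}(-A)^{1/2}x_{i+1}\in H$ and that $Q^{-1/2}(-A)^{1/2}e^{A}$ is bounded is accurate.
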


Before giving the proof of Lemma \ref{lb-endpoint}, we discuss one preliminary fact about
the travel-phase equation.

The travel control~\eqref{travel-control-def} was designed precisely so
{that the sum of the first two terms on the right side of \eqref{travel-state-mild}, evaluated at
$t = P_i + \eps$, equals $x_{i+1}$, up to a negligible error term}. Indeed, the identity 
\begin{equation}\label{linear-endpoint-identity}
  e^{A}\, \bar v^\eps(P_i)  +  \frac{1}{\eps} \int_{P_i}^{P_i + \eps} e^{(P_i + \eps - s) A/\eps}\,
    f^\eps_{\bar v^\eps(P_i), x_{i+1}}(s - P_i)\, ds  =  x_{i+1}.
\end{equation}
can be verified by expanding both sides in the
eigenbasis $ (e_k )_{k \in\,\mathbb{N}}$ and using~\eqref{travel-control-coeffs}.
To see this, set $r = s - P_i$
and note that the integral of the $k$-th component of the integrand in the eigenbasis expansion  becomes
\[
  \frac{1}{\eps}
  c^\eps_k(\bar v^\eps(P_i), x_{i+1})
    \int_0^\eps e^{-\alpha_k(\eps - r)/\eps}\, e^{-\alpha_k(\eps - r)/\eps}\, dr
    =  c^\eps_k(\bar v^\eps(P_i), x_{i+1}) 
     \frac{1 - e^{-2\alpha_k}}{2\alpha_k},
\]
which, by~\eqref{travel-control-coeffs}, equals exactly
$\eta_k = \langle x_{i+1} - e^{A}\bar v^\eps(P_i),\, e_k\rangle_H$, proving
the claimed identity in \eqref{linear-endpoint-identity} in the $k$-th coordinate for every $k$.

\begin{proof}[Proof of Lemma~\ref{lb-endpoint}]

In what follows, we write, for $t \in T_i^{\eps}$,
\[r^\eps_i(t)  {:=
R_i^{\eps}(t, \bar v^{\eps}(t))}=(B_\eps^{1/2}-Q^{1/2})\left(g^\eps_{\bar v^\eps(P_i),x_{i+1}}(t-P_i)
- Q^{1/2}\nabla U(\bar v^\eps(t))\right).\]
Due to the equiboundedness of $\|B^{1/2}_{\epsilon}-Q^{1/2}\|_{\mathcal{L}(H)}$ (see \eqref{CB}), thanks to ~\eqref{travel-cost-CM}  we have
\begin{align*}
 & \int_{P_i}^{P_i+\eps}\mathbb{E}\,|r_i^\eps(t)|_H^2\,dt
  \;\leq\;
  c\int_{P_i}^{P_i+\eps}\mathbb{E}\,
    |g^\eps_{\bar v^\eps(P_i),x_{i+1}}(t-P_i)|_H^2\,dt +\;
  c\int_{P_i}^{P_i+\eps}\mathbb{E}\,
    |Q^{1/2}\nabla U(\bar v^\eps(t))|_H^2\,dt\\[10pt]
 \leq &\,c\,\eps\,\mathbb{E}\,
|Q^{-1/2}(-A)^{1/2}(x_{i+1}-e^A\bar v^\eps(P_i))|_H^2 +c\,\eps\,\left(\|Q^{1/2}\|^2_{\mathcal{L}(H)}[\nabla U]^2_{\mathrm{Lip}}
\sup_{t\in [0,1]}\mathbb{E}\,|\bar v^\eps(t)|_H^2 \right).
\end{align*}
Using Lemma \ref{lb-moment} and Hypothesis \ref{H1}(2),
this implies
\begin{equation}\label{residual-vanish}
  \lim_{\eps\to 0}
  \int_{P_i}^{P_i+\eps}\mathbb{E}\,|r^\eps_i(t)|_H^2\,dt \;=\; 0.
\end{equation}

By~\eqref{travel-state-mild}, evaluated at $t = P_i + \eps$, and by
~\eqref{linear-endpoint-identity},
\begin{equation}\label{endpoint-decomposition}
  \bar v^\eps(P_i + \eps) - x_{i+1}
   =  \mathcal{R}^\eps_i  +  \mathcal{N}^\eps_i,
\end{equation}
where
\begin{equation}\label{R-residual-int}
  \mathcal{R}^\eps_i  \coloneqq  \frac{1}{\eps}\int_{P_i}^{P_i + \eps}
    e^{(P_i + \eps - s) A/\eps}\, r^\eps_i(s)\, ds,
  \qquad
  \mathcal{N}^\eps_i  \coloneqq  \frac{s(\eps)}{\sqrt{\eps}} \int_{P_i}^{P_i + \eps} e^{(P_i + \eps - s) A/\eps}\, dw^\eps(s).
\end{equation}

By It\^{o} isometry and a change of variables, we have
\[
  \mathbb{E}\,|\mathcal{N}^\eps_i|_H^2
  \;=\; s^2(\eps)\int_0^1
    \mathrm{tr}\bigl(e^{uA}\,B_\eps\,e^{uA}\bigr)\,du.
\]
Then, due to  condition~\eqref{as1} in
Hypothesis~\ref{H1}.4, we have
\[
  \eps\,\mathbb{E}\,|\mathcal{N}^\eps_i|_H^2
  \;\leq\; c\,\eps  \;\longrightarrow\; 0,\ \ \ \ \text{as}\ \eps\downarrow 0.
\]
 
Concerning the  residual $\mathcal{R}^\eps_i$, we have\[
  |\mathcal{R}^\eps_i|_H
  \;\leq\; \frac{1}{\sqrt{2\lambda\,\eps}}
    \Bigl(\int_{P_i}^{P_i+\eps}|r^\eps_i(s)|_H^2\,ds\Bigr)^{\!1/2},
\]
hence
\[
  \eps\,\mathbb{E}\,|\mathcal{R}^\eps_i|_H^2
  \;\leq\; \frac{1}{2\lambda}\,
    \mathbb{E}\int_{P_i}^{P_i+\eps}|r^\eps_i(s)|_H^2\,ds
  \;\longrightarrow\; 0,\ \ \ \ \text{as}\ \eps\downarrow 0,
\]
where the last step uses~\eqref{residual-vanish}.

By combining the last two estimates with~\eqref{endpoint-decomposition}, we obtain \eqref{stab-initial}.

\end{proof}

\begin{Lemma}\label{lb-hold-occupation}
For each $i = 0, 1, \ldots, k - 1$
\[\lim_{\e\to 0}\,
  \mathbb{E}\, d_{\mathrm{BL}}\!\left(
    \frac{1}{p_{i+1} - \eps}\int_{P_i + \eps}^{P_{i+1}} \delta_{\bar v^\eps(s)}\, ds, 
    \delta_{x_{i+1}}
  \right)  =  0.
\]
\end{Lemma}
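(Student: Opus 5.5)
The plan is to reduce the statement to the stabilization estimate of Lemma~\ref{stabilization}. The one genuine difference is the residual $\mathfrak r^\eps_{i+1}$, which we handle by a comparison argument.

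\emph{Step 1: shift to the stabilization variable.} Set $z^\eps(t):=\bar v^\eps(P_i+\eps+t)-x_{i+1}$ for $t\in[0,p_{i+1}-\eps]$. By \eqref{hold-state-equation-recursive} and \eqref{eq:conshold}, $z^\eps$ solves
\[
dz^\eps=\frac1\eps\bigl(Az^\eps+\mathcal V_{x_{i+1}}(z^\eps)+\mathfrak r^\eps_{i+1}\bigr)dt+\frac{s(\eps)}{\sqrt\eps}\,dw^\eps,
\]
with initial datum $y_\eps=\bar v^\eps(P_i+\eps)-x_{i+1}$. Because $\mathfrak r^\eps_{i+1}\neq 0$ in general, Lemma~\ref{stabilization} does not apply directly. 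Let $y^\eps_{x_{i+1}}$ be the solution of \eqref{stab-sde} driven by the same increments of $w^\eps$ after $P_i+\eps$ and started from the same $y_\eps$.

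\emph{Step 2: control the difference.} Set $D^\eps:=z^\eps-y^\eps_{x_{i+1}}$. The noise cancels in the difference, so $D^\eps$ solves a deterministic equation pathwise, and $D^\eps(0)=0$. The incremental dissipativity \eqref{shifted-dissipativity} together with Young's inequality gives
\[
\frac{d}{dt}|D^\eps|_H^2\le -\frac{\omega}{\eps}|D^\eps|_H^2+\frac{c}{\eps}|\mathfrak r^\eps_{i+1}|_H^2 .
\]
Hence $\sup_t|D^\eps(t)|_H^2\le c\,|\mathfrak r^\eps_{i+1}|_H^2/\omega^2\to0$, since $|\mathfrak r^\eps_{i+1}|_H\to0$ by the strong convergence $B_\eps^{1/2}\to Q^{1/2}$. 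This energy estimate requires the strong formulation, so we would justify it by Yosida or Galerkin approximation. That justification is standard.

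\emph{Step 3: transfer to occupation measures.} Both processes are generated by the same time points, and every $f\in\mathrm{Lip}_{b,1}(H)$ satisfies $|f(a)-f(b)|\le|a-b|_H$. Therefore, writing $T:=p_{i+1}-\eps$,
\[
d_{\mathrm{BL}}\Bigl(\tfrac1T\!\int_0^T\!\delta_{z^\eps(s)}ds,\ \tfrac1T\!\int_0^T\!\delta_{y^\eps_{x_{i+1}}(s)}ds\Bigr)\le\sup_t|D^\eps(t)|_H\to0.
\]
Translation by $x_{i+1}$ is an isometry for $d_{\mathrm{BL}}$, so the occupation measure of $\bar v^\eps$ on $H_i^\eps$ is at $d_{\mathrm{BL}}$-distance from $\delta_{x_{i+1}}$ equal to the distance of the occupation measure of $z^\eps$ from $\delta_0$. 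It therefore suffices to show
\[
\lim_{\eps\to0}\mathbb E\, d_{\mathrm{BL}}\Bigl(\tfrac1T\!\int_0^T\!\delta_{y^\eps_{x_{i+1}}(s)}ds,\ \delta_0\Bigr)=0 .
\]
For $\eps<p_{i+1}/2$ we have $T\ge \kappa:=p_{i+1}/2$, so this is Lemma~\ref{stabilization} evaluated at $t=T\in[\kappa,1]$.

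\emph{Main obstacle: the hypothesis of Lemma~\ref{stabilization}.} That lemma is stated for deterministic initial data with $\sup_\eps\eps|y_\eps|_H^2<\infty$. Here $y_\eps$ is random and $\mathcal F_{P_i+\eps}$-measurable, and Lemma~\ref{lb-endpoint} gives only $\eps\,\mathbb E|y_\eps|_H^2\to0$. We would proceed as follows.

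1. Condition on $\mathcal F_{P_i+\eps}$, using independence of the future increments. Then $\mathbb E[d_{\mathrm{BL}}(\cdots)\mid\mathcal F_{P_i+\eps}]=G_\eps(y_\eps)$, where $G_\eps(y):=\mathbb E\,d_{\mathrm{BL}}(\cdots)$ for the solution started from the deterministic point $y$.
2. Note that $G_\eps\le 2$, and fix $L>0$.
3. On the event $\{\eps|y_\eps|_H^2\le L\}$, argue that $\sup_{|y|^2\le L/\eps}G_\eps(y)\to0$. The proofs of Lemmas~\ref{stab-bounds}--\ref{stabilization} depend on the initial datum only through $\eps|y_\eps|_H^2$. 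So if this supremum did not vanish, we could choose near-maximizing points $y_\eps$, which are deterministic and satisfy the bound, and reach a contradiction with Lemma~\ref{stabilization}.
4. On the complement, Markov's inequality gives probability at most $\eps\,\mathbb E|y_\eps|_H^2/L\to0$.

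Combining items 3 and 4 and then letting $L\to\infty$ concludes the proof.
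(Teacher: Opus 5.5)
Your proposal is correct and follows essentially the same route as the paper. You shift by $x_{i+1}$, compare $z^\eps$ with the stabilization solution $y^\eps_{x_{i+1}}$ driven by the same noise using dissipativity and Gronwall (you obtain this bound pathwise, whereas the paper states it in $\sup_t\mathbb{E}$), and then invoke Lemma~\ref{stabilization} after conditioning on $\mathcal F_{P_i+\eps}$ and applying Markov's inequality. Your contradiction argument showing $\sup_{\eps|y|_H^2\le L}G_\eps(y)\to0$ is a careful expansion of the random-initial-datum step, which the paper only sketches as ``conditioned out and then averaged via dominated convergence.''
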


\begin{proof}
 By translation invariance of
$d_{\mathrm{BL}}$, under the substitution $x \mapsto x - x_{i+1}$
applied to test functions $f \in \mathrm{Lip}_{b,1}(H)$, we get
\begin{equation}\label{hold-occ-translation}
\begin{array}{l}  \ds{\mathbb{E}\, d_{\mathrm{BL}}\!\left(\frac{1}{p_{i+1} - \eps}\int_{P_i + \eps}^{P_{i+1}} \delta_{\bar v^\eps(s)}\, ds,  \delta_{x_{i+1}}\right)   =  \mathbb{E}\, d_{\mathrm{BL}}\!\left(\frac{1}{p_{i+1} - \eps}\int_0^{p_{i+1} - \eps} \delta_{z^\eps(s)}\, ds,  \delta_0\right),}
\end{array}
\end{equation}
where $z^\eps(s) \coloneqq \bar v^\eps(P_i + \eps + s) - x_{i+1}$. By
the design of the hold-phase
control, the process $z^\eps$
solves
\begin{equation}\label{zeps-equation}
  d z^\eps(t)  =\frac 1\e \left(Az^\e(t) + \mathcal{V}_{x_{i+1}}(z^\eps(t))\right)\, dt
     +  \frac{1}{\eps}\, \mathfrak{r}^\eps_{i+1}\, dt
     +  \frac{s(\eps)}{\sqrt{\eps}}\, dw^\eps(t),
\end{equation}
with $z^\eps(0) = \bar v^\eps(P_i + \eps) - x_{i+1}$, where
$\mathcal{V}_{x_{i+1}}$ is the shifted drift~\eqref{shifted-drift-def}
at $x_* = x_{i+1}$ and $\mathfrak{r}^\eps_{i+1}$ is the residual
defined in ~\eqref{eq:conshold}.

If we compare~\eqref{zeps-equation} with the stabilization
SDE~\eqref{stab-sde}, for $x_* = x_{i+1}$
and the same initial condition $y_\eps = \bar v^\eps(P_i + \eps) - x_{i+1}$, we see that  they differ only by the deterministic
forcing $\eps^{-1} \mathfrak{r}^\eps_{i+1}$. Hence,   the dissipativity~\eqref{shifted-dissipativity}
and Gronwall's inequality yield
\begin{equation}\label{zeps-vs-stab}
  \sup_{t \in [0, p_{i+1} - \eps]} \mathbb{E}\, |z^\eps(t) - y_{x_{i+1}}^\eps(t)|_H^2
   \leq  \frac{|\mathfrak{r}^\eps_{i+1}|_H^2}{\omega^2}\, \bigl( 1 - e^{-\omega(p_{i+1}-\eps)/\eps} \bigr)
   \leq  \frac{|\mathfrak{r}^\eps_{i+1}|_H^2}{\omega^2}.
\end{equation}
Since $|\mathfrak{r}^\eps_{i+1}|_H \to 0$, this shows that $z^\eps$ and
$y_{x_{i+1}}^\eps$ are asymptotically equal in $L^2(\Omega,\mathbb{P}; H)$,
uniformly on $[0, p_{i+1} - \eps]$.

From Lemma~\ref{lb-endpoint} we have 
$\sup_{\epsilon \in\,(0,1)} \eps\, \mathbb{E}|y_\eps|_H^2 < \infty$. By
Lemma~\ref{stabilization} applied to $y_{x_{i+1}}^\eps$
(with the random initial condition $y_\eps$ first conditioned out and
then averaged via dominated convergence, {and an application of Markov's inequality}),
\[
  \lim_{\e\to 0}\mathbb{E}\, d_{\mathrm{BL}}\!\left(\frac{1}{p_{i+1} - \eps}\int_0^{p_{i+1} - \eps} \delta_{y_{x_{i+1}}^\eps(s)}\, ds,  \delta_0\right)  =  0.
\]
Combining this with~\eqref{zeps-vs-stab} and using the properties of $d_{\mathrm{BL}}$, together with the identity in   \eqref{hold-occ-translation}, yields the
claimed convergence in the lemma.
\end{proof}

\subsection{Cost estimate and occupation measure convergence}
\label{lb-cost-sec}

We now estimate the total Cameron-Martin cost and characterize the limit of the
empirical measure $\bar\nu^\eps$.

\begin{Proposition}\label{lb-cost}
With $\varphi^\eps$ as in Construction \ref{cons-A},
\[
  \limsup_{\eps \to 0}\, \frac{1}{2}\, \mathbb{E}\int_0^1 |\varphi^\eps(t)|_H^2\, dt
   =  \frac{1}{2}\, \int_H \bigl| Q^{-1/2}A x + Q^{1/2} \nabla U(x) \bigr|_H^2\, \tilde\mu(dx).
\]
\end{Proposition}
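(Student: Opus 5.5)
The plan is to split the cost integral along the partition of Construction~\ref{cons-A}:
\[
\mathbb{E}\int_0^1|\varphi^\eps(t)|_H^2\,dt=\sum_{i=0}^{k-1}\Big(\mathbb{E}\int_{T_i^\eps}|\varphi^\eps(t)|_H^2\,dt+\int_{H_i^\eps}|h_{i+1}|_H^2\,dt\Big).
\]
The hold contributions are deterministic. By \eqref{hold-CM-norm} they equal $(p_{i+1}-\eps)|h_{i+1}|_H^2$, and these converge, as $\eps\to0$, to $p_{i+1}|Q^{-1/2}Ax_{i+1}+Q^{1/2}\nabla U(x_{i+1})|_H^2$. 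Summing over $i$ gives exactly $\int_H|Q^{-1/2}Ax+Q^{1/2}\nabla U(x)|_H^2\,\tilde\mu(dx)$. It therefore suffices to show that each travel contribution vanishes; this yields a limit, and hence the stated $\limsup$ identity.

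On $T_i^\eps$ the feedback formula \eqref{travel-feedback-recursive} gives
\[
|\varphi^\eps(t)|_H^2\le 2|g^\eps_{Y_i^\eps,x_{i+1}}(t-P_i)|_H^2+2\|Q^{1/2}\|^2_{\mathcal L(H)}[\nabla U]^2_{\mathrm{Lip}}|\bar v^\eps(t)|_H^2,
\]
where we use $\nabla U(0)=0$. The second term, integrated over an interval of length $\eps$, is at most $c\,\eps\sup_{t}\mathbb{E}|\bar v^\eps(t)|_H^2$. By Lemma~\ref{lb-moment} this is $O(\eps)$. For the first term, apply \eqref{travel-cost-CM} conditionally on $\mathcal F_{P_i}$:
\[
\mathbb{E}\int_0^\eps|g^\eps_{Y_i^\eps,x_{i+1}}(r)|_H^2\,dr\le c\,\eps\,\mathbb{E}\big|Q^{-1/2}(-A)^{1/2}(x_{i+1}-e^AY_i^\eps)\big|_H^2 .
\]

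The main obstacle is to bound this right-hand side uniformly in $\eps$. It splits into a fixed piece $|Q^{-1/2}(-A)^{1/2}x_{i+1}|_H^2$, which is finite because $x_{i+1}\in\mathcal D$, and a piece $\mathbb{E}|Q^{-1/2}(-A)^{1/2}e^AY_i^\eps|_H^2$. For the latter, write $(-A)^{1/2}e^{A}=(-A)^{-1/2}\cdot Ae^{A/2}\cdot e^{A/2}$. Hypothesis~\ref{H1}(2) makes $Q^{-1}Ae^{A/2}$ bounded, and $Q^{1/2}$ is bounded, so
\[
Q^{-1/2}Ae^{A/2}=Q^{1/2}\,Q^{-1}Ae^{A/2}
\]
is bounded, with all operators commuting. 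Hence $Q^{-1/2}(-A)^{1/2}e^A$ is a bounded operator on $H$, which is the same fact already used implicitly in \eqref{residual-vanish}. Then Lemma~\ref{lb-moment} gives $\sup_\eps\mathbb{E}|Y_i^\eps|_H^2<\infty$, so the whole bound is $c\,\eps(1+M_i)$.

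Each travel contribution is therefore $O(\eps)$, and the $k$ travel intervals together contribute $O(k\eps)\to0$. Adding this to the hold-phase limit proves the proposition.
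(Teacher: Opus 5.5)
Your proposal is correct and follows the paper's proof essentially step by step. You use the same travel/hold split, bound the travel phase with \eqref{travel-cost-CM}, the Lipschitz estimate and Lemma~\ref{lb-moment}, and compute the hold cost exactly from \eqref{hold-CM-norm}. Your factorization $Q^{-1/2}(-A)^{1/2}e^{A}=-(-A)^{-1/2}\,Q^{1/2}\,(Q^{-1}Ae^{A/2})\,e^{A/2}$ spells out the boundedness that the paper simply attributes to Hypothesis~\ref{H1}(2); the minus sign missing from your version is immaterial for boundedness.
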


\begin{proof}
We decompose the total cost into travel and hold contributions
\[
  \frac{1}{2}\, \mathbb{E}\int_0^1 |\varphi^\eps(t)|_H^2\, dt
   =  \frac{1}{2}\sum_{i=0}^{k-1} \mathbb{E}\int_{P_i}^{P_i + \eps} |\varphi^\eps(t)|_H^2\, dt
    +  \frac{1}{2}\sum_{i=0}^{k-1} \mathbb{E}\int_{P_i + \eps}^{P_{i+1}} |\varphi^\eps(t)|_H^2\, dt.
\]
For the travel contributions, by~\eqref{travel-feedback-recursive}
\[
  \int_{P_i}^{P_i + \eps} |\varphi^\eps(t)|_H^2\, dt
    \leq  2 \int_0^\eps \bigl| g^\eps_{\bar v^\eps(P_i), x_{i+1}}(r) \bigr|_H^2\, dr
     +  2 \int_{P_i}^{P_i + \eps} \bigl| Q^{1/2}\, \nabla U(\bar v^\eps(t)) \bigr|_H^2\, dt,
\]
 the cost estimate~\eqref{travel-cost-CM} gives
 \[
   \mathbb{E}\int_0^\eps \bigl| g^\eps_{\bar v^\eps(P_i), x_{i+1}}(r) \bigr|_H^2\, dr
    \leq  c\eps\, \mathbb{E}\, \bigl| Q^{-1/2}(-A)^{1/2}(x_{i+1} - e^{A}\bar v^\eps(P_i)) \bigr|_H^2
    \leq  c_i\, \eps,
\]
where $c_i < \infty$.
{Indeed,
\(x_{i+1}\in\mathcal D\) implies
$
  Q^{-1/2}(-A)^{1/2}x_{i+1}\in H$.
Moreover, by Hypothesis \ref{H1}(2), for every \[|Q^{-1/2}(-A)^{1/2}e^A y|_H^2
  \leq
  C |y|_H^2,\ \ \ \ y \in\,H.\]
Thus, using the moment bound of
Lemma~\ref{lb-moment} gives
\[\mathbb E
  \bigl|
  Q^{-1/2}(-A)^{1/2}
  (x_{i+1}-e^A\bar v^\eps(P_i))
  \bigr|_H^2
  <\infty.\] Combining the above observations gives the claim $c_i<\infty$.}

By the Lipschitz property of $\nabla U$ in Hypothesis~\ref{H2}.1 and the
boundedness of $Q^{1/2}$,
\[
   \mathbb{E}\int_{P_i}^{P_i + \eps} \bigl| Q^{1/2}\, \nabla U(\bar v^\eps(t)) \bigr|_H^2\, dt
    \leq  \|Q\|_{\mathcal{L}(H)}\, \eps\, \cdot 2 \bigl( [\nabla U]_{\text{\tiny Lip}}^2\, M_1 + |\nabla U(0)|_H^2 \bigr)
    \leq  c\,\eps,
\]
where $M_1 \coloneqq \sup_{\eps \in (0,1)} \sup_{t\in [0,1]} \mathbb{E}\,|\bar v^\eps(t)|_H^2 < \infty$
by Lemma~\ref{lb-moment}. Hence
\[
  \lim_{\e\to 0}\sum_{i=0}^{k-1}\mathbb{E}\int_{P_i}^{P_i + \eps} |\varphi^\eps(t)|_H^2\, dt
   =   0.
\]

For the hold contributions, in view of ~\eqref{hold-CM-norm}
\begin{align*}
  \frac{1}{2}\sum_{i=0}^{k-1} \mathbb{E}\int_{P_i + \eps}^{P_{i+1}} |\varphi^\eps(t)|_H^2\, dt
  &= \frac{1}{2}\sum_{i=0}^{k-1} (p_{i+1} - \eps)\, \bigl| Q^{-1/2}A x_{i+1} + Q^{1/2} \nabla U(x_{i+1}) \bigr|_H^2,\end{align*}
  so that 
   \[\begin{array}{l}
\ds{	\lim_{\e\to 0} \frac{1}{2}\sum_{i=0}^{k-1} \mathbb{E}\int_{P_i + \eps}^{P_{i+1}} |\varphi^\eps(t)|_H^2\, dt= \frac{1}{2}\sum_{i=0}^{k-1} p_{i+1}\, \bigl| Q^{-1/2}A x_{i+1} + Q^{1/2} \nabla U(x_{i+1}) \bigr|_H^2 }\\[14pt]
  \ds{\quad \quad \quad \quad \quad \quad\quad \quad \quad\quad \quad \quad\quad \quad \quad\quad \quad \quad= \frac{1}{2}\int_H \bigl| Q^{-1/2}A x + Q^{1/2} \nabla U(x) \bigr|_H^2\, \tilde\mu(dx).}
\end{array}\]
Combining the travel and hold cost estimates completes the proof of the proposition.
\end{proof}

\begin{Proposition}\label{lb-empirical}
With $\bar\nu^\eps = \int_0^1 \delta_{\bar v^\eps(s)}\, ds$,
$\lim_{\e\to 0}\bar\nu^\eps  =  \tilde\mu$ in  $\mathcal{P}(H)$, in probability.
\end{Proposition}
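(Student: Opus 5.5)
The plan is to show $\mathbb{E}\, d_{\mathrm{BL}}(\bar\nu^\eps,\tilde\mu)\to 0$, which gives convergence in probability. Write $\tilde\mu=\sum_{i=0}^{k-1}p_{i+1}\delta_{x_{i+1}}$ and split $[0,1]$ into the travel intervals $T_i^\eps$ and the hold intervals $H_i^\eps$ of Construction~\ref{cons-A}. Correspondingly,
\[
\bar\nu^\eps=\sum_{i=0}^{k-1}\int_{T_i^\eps}\delta_{\bar v^\eps(s)}\,ds+\sum_{i=0}^{k-1}(p_{i+1}-\eps)\,\mu_i^\eps,
\qquad
\mu_i^\eps:=\frac{1}{p_{i+1}-\eps}\int_{P_i+\eps}^{P_{i+1}}\delta_{\bar v^\eps(s)}\,ds .
\]

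Fix $f\in\mathrm{Lip}_{b,1}(H)$. Then $|\int f\,d\bar\nu^\eps-\int f\,d\tilde\mu|$ is bounded by three contributions.
\begin{itemize}
\item[(a)] The travel pieces contribute at most $\sum_i \eps\|f\|_\infty\le k\eps$.
\item[(b)] Replacing the weight $(p_{i+1}-\eps)$ by $p_{i+1}$ costs at most $\sum_i\eps\|f\|_\infty\le k\eps$.
\item[(c)] The remaining terms $p_{i+1}\bigl|\int f\,d\mu_i^\eps-f(x_{i+1})\bigr|$ are each at most $p_{i+1}\,d_{\mathrm{BL}}(\mu_i^\eps,\delta_{x_{i+1}})$.
\end{itemize}
Taking the supremum over $f$ (all bounds above are uniform in $f$) gives the pathwise inequality
\[
d_{\mathrm{BL}}(\bar\nu^\eps,\tilde\mu)\le 2k\eps+\sum_{i=0}^{k-1}p_{i+1}\,d_{\mathrm{BL}}\bigl(\mu_i^\eps,\delta_{x_{i+1}}\bigr).
\]

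Next take expectations. Lemma~\ref{lb-hold-occupation} states exactly that $\mathbb{E}\,d_{\mathrm{BL}}(\mu_i^\eps,\delta_{x_{i+1}})\to0$ for each of the finitely many $i$. Hence $\mathbb{E}\,d_{\mathrm{BL}}(\bar\nu^\eps,\tilde\mu)\to0$, and Markov's inequality yields convergence in probability.

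No step here is genuinely hard: the substantive work, namely the stabilization in Lemma~\ref{stabilization} and the endpoint control in Lemma~\ref{lb-endpoint} feeding into Lemma~\ref{lb-hold-occupation}, is already done. The only point needing care is the uniformity in $f$ of the travel and weight errors, which holds because only $\|f\|_\infty\le1$ is used. No moment bounds on $\bar v^\eps$ over the travel intervals are needed, since test functions are bounded.
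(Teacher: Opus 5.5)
Your proposal is correct and follows the paper's proof: it uses the same travel/hold decomposition of $\bar\nu^\eps$, with Lemma~\ref{lb-hold-occupation} handling the hold pieces and the travel pieces contributing $O(\eps)$. Your version simply makes the paper's brief argument quantitative, through the explicit pathwise bound $d_{\mathrm{BL}}(\bar\nu^\eps,\tilde\mu)\le 2k\eps+\sum_i p_{i+1}\,d_{\mathrm{BL}}(\mu_i^\eps,\delta_{x_{i+1}})$ followed by taking expectations.
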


\begin{proof}We decompose \[\bar\nu^\eps = \sum_{i=0}^{k-1}\bigl( (p_{i+1} - \eps)  \,\theta^\eps_i + \eps \, \tau^\eps_i \bigr),\]
where
\[
  \theta^\eps_i  \coloneqq  \frac{1}{p_{i+1} - \eps}\int_{P_i + \eps}^{P_{i+1}} \delta_{\bar v^\eps(s)}\, ds,
  \qquad
  \tau^\eps_i  \coloneqq  \frac{1}{\eps}\int_{P_i}^{P_i + \eps} \delta_{\bar v^\eps(s)}\, ds.
\]
By Lemma~\ref{lb-hold-occupation}, $\theta^\eps_i \to \delta_{x_{i+1}}$
in probability. Moreover,  the contribution of the $\tau^\eps_i$ is $O(\eps)$, which vanishes as $\e\downarrow 0$. This result follows.
\end{proof}

\subsection{Proof of the Laplace lower bound}\label{lb-assembly}
 By the variational
representation~\eqref{varrep-scaled}, the admissibility of
$\varphi^\eps \in \mathcal{A}$ as constructed in
Subsection~\ref{lb-control-construction}, and the identification of the
controlled process with $\bar\nu^\eps$, we obtain
\begin{align*}
  -\eps\, s^2(\eps)\, \log \mathbb{E}\, \exp\!\left( -\frac{1}{\eps s^2(\eps)}\,F(\nu^\eps) \right)
  & \leq  \mathbb{E}\!\left( F(\bar\nu^\eps) + \frac{1}{2}\int_0^1 |\varphi^\eps(t)|_H^2\, dt \right).
\end{align*}
Since $F \in C_b$, Proposition~\ref{lb-empirical} allows us to handle the limsup of the first term on the right-hand side, and  
Proposition~\ref{lb-cost} the limsup of  the second one, so that
\begin{align*}
  \limsup_{\eps \to 0}\, -\eps\, s^2(\eps)\, \log \mathbb{E}\, &\exp\!\Big( -\frac{1}{\eps s^2(\eps)}\,F(\nu^\eps) \Big)
  \leq  F(\tilde\mu) + \frac{1}{2}\int_H \bigl| Q^{-1/2}A x + Q^{1/2} \nabla U(x) \bigr|_H^2\, \tilde\mu(dx) \\[12pt]
  & =  F(\tilde\mu)  +  I(\tilde\mu)  \leq  \inf_{\gamma \in \mathcal{P}(H)} \bigl( F(\gamma) + I(\gamma) \bigr) + 3\delta,
\end{align*}
where the last inequality is~\eqref{tilde-mu-deltaopt}. Since
$\delta > 0$ is arbitrary, the proof of~\eqref{laplace-lower-restate}
is complete. \qed

\section{Compactness of level sets}
\label{compactness}

In this section we complete the proof of Theorem~\ref{main} by
showing that the functional $I : \mathcal{P}(H) \to [0, \infty]$
defined in~\eqref{action} is a rate function on $\mathcal{P}(H)$.

We recall from Section~\ref{as30} the notation
\[
\Psi(y)  \coloneqq   Q^{-1/2} A y \,+\, Q^{1/2} \nabla U(y),
\]
defined for $y \in D(A)$ with $A y \in \mathrm{Range}(Q^{1/2})$
(with the convention $|\Psi(y)|_H = +\infty$ otherwise), so that
\[
  I(\gamma)  =  \frac{1}{2}\int_H \bigl|\Psi(y)\bigr|_H^2\, \gamma(dy),
  \qquad \gamma \in \mathcal{P}(H).
\]
We also recall the dissipativity constant $\omega$ from Hypothesis \ref{H2}
 and the
common orthonormal eigenbasis $ (e_k )_{k \in \N}$ of $-A$ and $Q$
from Remark~\ref{QAcommute}.
Recall that, for every
$n \in \N$, $\pi_n : H \to \pi_n(H) = \mathrm{span}\{e_1,\ldots,e_n\}$
denotes the orthogonal projection onto the first $n$ modes.

We begin with the key pointwise coercivity estimate.

\begin{Lemma}\label{coercivity-Psi}
There is a constant
$\kappa_0 \in (0, \infty)$, depending only on
$\lambda$, $[\nabla U]_{\mathrm{Lip}}$ and $\|Q\|_{\mathcal{L}(H)}$, such that
for every $y \in D(A)$ with $A y \in \mathrm{Range}(Q^{1/2})$,
\begin{equation}\label{coerc-y}
  |y|_H  \leq  \kappa_0 \, \bigl|\Psi(y)\bigr|_H,
\end{equation}
and
\begin{equation}\label{coerc-Ay}
  |A y|_H  \leq  \kappa_0\, \bigl|\Psi(y)\bigr|_H.
\end{equation}
\end{Lemma}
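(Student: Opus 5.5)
The plan is to test the identity defining $\Psi$ against $y$ itself and to use the dissipativity encoded in $\omega$ (Hypothesis~\ref{H2}). This gives \eqref{coerc-y} directly, and \eqref{coerc-Ay} then follows by solving for $Ay$.

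\emph{Removing the unbounded inverse.} Fix $y \in D(A)$ with $Ay \in \mathrm{Range}(Q^{1/2})$. Since $\operatorname{Ker} Q=\{0\}$, the operator $Q^{1/2}$ is injective, so $Q^{-1/2}Ay$ is the unique $h\in H$ with $Q^{1/2}h = Ay$. Applying the bounded operator $Q^{1/2}$ to $\Psi(y)$ therefore gives
\[
  Q^{1/2}\Psi(y) = Ay + Q\nabla U(y),
\]
which involves no inverse. I expect this to be the only delicate point: every pairing below must be taken against this form, never against $Q^{-1/2}$ directly.

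\emph{Proof of \eqref{coerc-y}.} Take the inner product with $y$. The left side is
\[
  \langle Q^{1/2}\Psi(y),y\rangle_H=\langle \Psi(y),Q^{1/2}y\rangle_H ,
\]
so by Cauchy--Schwarz its absolute value is at most $\|Q\|_{\mathcal L(H)}^{1/2}\,|\Psi(y)|_H\,|y|_H$. On the right side, Hypothesis~\ref{H1}.1 gives
\[
  \langle Ay,y\rangle_H\le -\lambda|y|_H^2 .
\]
Since $\nabla U(0)=0$ and $\nabla U$ is Lipschitz,
\[
  |\langle Q\nabla U(y),y\rangle_H|\le \|Q\|_{\mathcal L(H)}[\nabla U]_{\mathrm{Lip}}|y|_H^2 .
\]
This is the same computation as in \eqref{shifted-dissipativity} with $x_*=0$. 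Combining the two sides yields
\[
  \omega|y|_H^2\le -\langle Ay+Q\nabla U(y),y\rangle_H\le \|Q\|_{\mathcal L(H)}^{1/2}\,|\Psi(y)|_H\,|y|_H .
\]
Dividing by $|y|_H$ (the case $y=0$ is trivial) gives
\[
  |y|_H\le \omega^{-1}\|Q\|_{\mathcal L(H)}^{1/2}|\Psi(y)|_H .
\]

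\emph{Proof of \eqref{coerc-Ay}.} Write $Ay = Q^{1/2}\Psi(y) - Q\nabla U(y)$. Using the Lipschitz bound again,
\[
  |Ay|_H\le \|Q\|_{\mathcal L(H)}^{1/2}|\Psi(y)|_H+\|Q\|_{\mathcal L(H)}[\nabla U]_{\mathrm{Lip}}|y|_H .
\]
Inserting \eqref{coerc-y} bounds the right side by a constant multiple of $|\Psi(y)|_H$.

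Taking $\kappa_0$ to be the larger of the two resulting constants proves both estimates. This $\kappa_0$ depends only on $\lambda$, $[\nabla U]_{\mathrm{Lip}}$ and $\|Q\|_{\mathcal L(H)}$, since $\omega$ is determined by these three quantities.
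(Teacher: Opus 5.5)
Your proposal is correct and follows essentially the same route as the paper: you rewrite the definition as $Q^{1/2}\Psi(y)=Ay+Q\nabla U(y)$, test it against $y$ using the dissipativity constant $\omega$ and Cauchy--Schwarz to get \eqref{coerc-y}, and then solve for $Ay$ to get \eqref{coerc-Ay}. The only difference is cosmetic: you write $\|Q\|_{\mathcal L(H)}^{1/2}$ where the paper writes $\|Q^{1/2}\|_{\mathcal L(H)}$, and these coincide because $Q$ is nonnegative and self-adjoint.
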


\begin{proof}
Fix $y \in D(A)$ with $A y \in \mathrm{Range}(Q^{1/2})$, so that
$\Psi(y) \in H$ and 
\[A y + Q\, \nabla U(y) = Q^{1/2}\, \Psi(y).\]
Taking
the inner product with $y$ and using $\langle A y, y\rangle_H \leq
-\lambda |y|_H^2$, together with $\nabla U(0)=0$ and
the Lipschitz property of $\nabla U$,
\[
  \langle Q^{1/2}\,\Psi(y),\, y\rangle_H
   =  \langle A y, y\rangle_H + \langle Q\, \nabla U(y), y\rangle_H
   \leq  -\lambda |y|_H^2
     \,+\, \|Q\|_{\mathcal{L}(H)}\, [\nabla U]_{\mathrm{Lip}}\, |y|_H^2
   =  -\omega |y|_H^2.
\]
On the other hand, by the Cauchy-Schwarz inequality,
\[
  \bigl|\langle Q^{1/2}\,\Psi(y),\, y\rangle_H\bigr|
   =  \bigl|\langle \Psi(y),\, Q^{1/2}\, y\rangle_H\bigr|
   \leq  \|Q^{1/2}\|_{\mathcal{L}(H)}\, |\Psi(y)|_H\, |y|_H,
\]
so that
\[
  \omega\, |y|_H^2  \leq  \|Q^{1/2}\|_{\mathcal{L}(H)}\, |\Psi(y)|_H\, |y|_H,
\]
which gives~\eqref{coerc-y} with $\kappa_0 \geq \omega^{-1} \|Q^{1/2}\|_{\mathcal{L}(H)}$.

Next, from $A y = Q^{1/2}\Psi(y) - Q\, \nabla U(y)$ and the Lipschitz
property of $\nabla U$ (with $\nabla U(0)=0$),
\[
  |A y|_H
   \leq  \|Q^{1/2}\|_{\mathcal{L}(H)}\,|\Psi(y)|_H
     + \|Q\|_{\mathcal{L}(H)}\,[\nabla U]_{\mathrm{Lip}}\,|y|_H.
\]
Combining this with~\eqref{coerc-y} yields~\eqref{coerc-Ay},
upon enlarging $\kappa_0$ if necessary.
\end{proof}

\begin{Remark}\label{Psi-domain}
{\em
Estimate~\eqref{coerc-Ay} shows in particular that, on the 
domain $\mathcal{D}$ constructed in \eqref{as35}
the quantity $|\Psi(y)|_H$ controls the full graph norm of $A$. Since
$A$ has compact resolvent, the sublevel sets
$\{y \in \mathcal{D} : |\Psi(y)|_H \leq R\}$ are relatively compact in
$H$ for every $R \in (0,\infty)$. This is the geometric content
underlying the tightness step of Theorem~\ref{I-rate}
below.
}
\end{Remark}

We next address the lower semicontinuity of the integrand. Because
$\Psi$ involves the unbounded operators $A$ and $Q^{-1/2}$, the lower semicontinuity of the map
$y \mapsto |\Psi(y)|_H^2$ is not immediate. We bypass this difficulty by writing $|\Psi(y)|_H^2$
as a supremum, over $n \in \N$, of expressions involving only the
bounded operators $A \pi_n$ and $Q^{-1/2}\pi_n$ on $\pi_n(H)$.

For $n \in \N$ and $y \in H$, set
\[
  \Psi_n(y)  \coloneqq  Q^{-1/2}\, A\, \pi_n y
     \,+\, Q^{1/2}\, \pi_n \nabla U(y),
\]
which is well defined, since $\pi_n(H) \subset D(A)$ and $\pi_n$
commutes with $Q^{1/2}$ and with $Q^{-1/2}$ on $\pi_n(H)$. Note that
$\Psi_n(y) \in \pi_n(H)$, and that $\Psi_n$ is continuous on $H$ as a
composition of bounded linear operators and the continuous map $\nabla U$.

\begin{Lemma}\label{Psi-truncation}
For every $y \in H$, the map $n \mapsto |\Psi_n(y)|_H^2$ is
nondecreasing, and
\begin{equation}\label{Psi-supremum}
  \bigl|\Psi(y)\bigr|_H^2  =  \sup_{n \in \N}\, \bigl|\Psi_n(y)\bigr|_H^2
    =  \lim_{n \to \infty} \bigl|\Psi_n(y)\bigr|_H^2,
\end{equation}
where both sides are interpreted in $[0, +\infty]$.
In particular, $y \mapsto |\Psi(y)|_H^2$ is lower semicontinuous on $H$.
\end{Lemma}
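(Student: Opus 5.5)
The proof works coordinatewise in the common eigenbasis $(e_k)_{k\in\N}$ of Remark~\ref{QAcommute}. Since $\operatorname{Ker}Q=\{0\}$, every $q_k>0$, so $Q^{-1/2}$ acts on $\pi_n(H)$ as multiplication by $q_k^{-1/2}$. Write $y_k:=\langle y,e_k\rangle_H$ and $g_k(y):=\langle \nabla U(y),e_k\rangle_H$, and set
\[
  a_k(y) \coloneqq -\alpha_k\, q_k^{-1/2}\, y_k + q_k^{1/2}\, g_k(y), \qquad k\in\N .
\]
A direct expansion gives $\langle \Psi_n(y),e_k\rangle_H=a_k(y)$ for $k\le n$ and $0$ for $k>n$. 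The key point is that $a_k(y)$ does not depend on $n$. Hence
\[
  |\Psi_n(y)|_H^2=\sum_{k=1}^n |a_k(y)|^2,
\]
which is nondecreasing in $n$, and
\[
  \sup_n|\Psi_n(y)|_H^2=\lim_n|\Psi_n(y)|_H^2=\sum_{k=1}^\infty|a_k(y)|^2\in[0,\infty].
\]

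It remains to identify this series with $|\Psi(y)|_H^2$, and I split into two cases.

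\emph{Case $|\Psi(y)|_H<\infty$.} Here $y\in D(A)$ and $Ay\in\mathrm{Range}(Q^{1/2})$. Since $Q^{-1/2}$ commutes with the projections, $\langle Q^{-1/2}Ay,e_k\rangle_H=-\alpha_k q_k^{-1/2}y_k$. Therefore $a_k(y)=\langle\Psi(y),e_k\rangle_H$, and Parseval gives $\sum_k|a_k(y)|^2=|\Psi(y)|_H^2$.

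\emph{Case $\sum_k|a_k(y)|^2<\infty$.} Define $\psi:=\sum_k a_k(y)e_k\in H$. Solving the definition of $a_k$ for the linear term gives
\[
  \alpha_k y_k = q_k\, g_k(y) - q_k^{1/2} a_k(y).
\]
Since $(q_k)_k$ is bounded and $\nabla U(y)\in H$, the right-hand side is square summable, so $y\in D(A)$. Moreover,
\[
  Ay = Q^{1/2}\bigl(\psi - Q^{1/2}\nabla U(y)\bigr)\in\mathrm{Range}(Q^{1/2}),
\]
and hence $\Psi(y)=\psi$.

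Together the two cases show that $|\Psi(y)|_H=\infty$ exactly when the series diverges. This proves \eqref{Psi-supremum} in $[0,+\infty]$.

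Lower semicontinuity then follows immediately. Each map $y\mapsto|\Psi_n(y)|_H^2$ is continuous on $H$, since $A\pi_n$ and $Q^{-1/2}\pi_n$ are bounded and $\nabla U$ is continuous. A pointwise supremum of continuous functions is lower semicontinuous.

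The only delicate step is the converse direction in the second case, namely recovering $y\in D(A)$ and $Ay\in\mathrm{Range}(Q^{1/2})$ from finiteness of the series. It is handled by the explicit coordinate inversion above, which uses the boundedness of $Q$ and the fact that $\nabla U(y)\in H$.
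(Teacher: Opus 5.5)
Your proof is correct and follows the same route as the paper. You expand $\Psi_n(y)$ in the common eigenbasis, recognize $|\Psi_n(y)|_H^2$ as the partial sums of a fixed series, and get lower semicontinuity as a supremum of continuous functions. Your coordinate inversion, which shows that a finite series forces $y\in D(A)$ and $Ay\in\mathrm{Range}(Q^{1/2})$, spells out the identification that the paper asserts only ``by the definition of $\Psi$ and the convention''.
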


\begin{proof}
Since $ (e_k )_{k \in\,\mathbb{N}}$ diagonalizes both $A$ and $Q$, for every $y \in H$ we
have, with $y_k \coloneqq \langle y, e_k\rangle_H$ and
$d_k(y) \coloneqq \langle \nabla U(y), e_k\rangle_H$,
\[
  \Psi_n(y)  =  \sum_{k=1}^n \Bigl( q_k^{-1/2}\, (-\alpha_k)\, y_k
    \,+\, q_k^{1/2}\, d_k(y) \Bigr)\, e_k,
\]
so that
\[
  |\Psi_n(y)|_H^2
   =  \sum_{k=1}^n \Bigl( q_k^{-1/2}(-\alpha_k) y_k
    + q_k^{1/2} d_k(y) \Bigr)^2.
\]
The sequence is therefore nondecreasing
in $n$, and its supremum equals $|\Psi(y)|_H^2$ by the definition of
$\Psi$ and the convention $|\Psi(y)|_H = +\infty$ outside $\mathcal{D}$.
This proves~\eqref{Psi-supremum}.

Lower semicontinuity follows: for each fixed $n$, $\Psi_n$ is
continuous on $H$, so $y \mapsto |\Psi_n(y)|_H^2$ is continuous; and a
supremum of continuous functions is lower semicontinuous.
\end{proof}

We are now ready to prove that $I$ is a rate function. Fix
$M \in (0, \infty)$ and consider the level set
\[
  \Phi_M  \coloneqq  \bigl\{ \gamma \in \mathcal{P}(H)  :  I(\gamma) \leq M \bigr\}.
\]

We can now conclude the proof of compactness of level sets.

\begin{Theorem}\label{I-rate}
Under Hypotheses~\ref{H1} and~\ref{H2}, the functional
$I : \mathcal{P}(H) \to [0,\infty]$ defined in~\eqref{action} is a
rate function on $\mathcal{P}(H)$.
\end{Theorem}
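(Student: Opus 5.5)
To show that $I$ is a rate function we must prove that each level set $\Phi_M$ is compact in $\mathcal{P}(H)$ with the weak topology. Since $\mathcal{P}(H)$ is Polish (metrized by $d_{\mathrm{BL}}$), it suffices to show two things: that $\Phi_M$ is closed, which follows from lower semicontinuity of $I$, and that $\Phi_M$ is tight, so that Prokhorov's theorem gives relative compactness. Together these give compactness.

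\emph{Lower semicontinuity.} By Lemma~\ref{Psi-truncation}, the integrand $y\mapsto \frac12|\Psi(y)|_H^2$ is the nondecreasing limit of the continuous, nonnegative functions $\frac12|\Psi_n(y)|_H^2$. For each fixed $n$ and $L>0$, the map $\gamma\mapsto \int_H \bigl(\tfrac12|\Psi_n(y)|_H^2\wedge L\bigr)\,\gamma(dy)$ is continuous for weak convergence, because the integrand is bounded and continuous. By monotone convergence, first in $L$ and then in $n$, we get
\[
I(\gamma)=\sup_{n,L}\int_H \bigl(\tfrac12|\Psi_n(y)|_H^2\wedge L\bigr)\,\gamma(dy).
\]
A supremum of continuous functionals is lower semicontinuous, so $\Phi_M$ is closed.

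\emph{Tightness.} We use Lemma~\ref{coercivity-Psi} and Remark~\ref{Psi-domain}. For $R>0$, let $K_R:=\overline{\{y\in\mathcal D: |\Psi(y)|_H\le R\}}$, the closure in $H$. By \eqref{coerc-y} and \eqref{coerc-Ay}, this set is contained in $\{y\in D(A): |y|_H+|Ay|_H\le 2\kappa_0 R\}$. Since $A$ has compact resolvent, the embedding $D(A)\hookrightarrow H$ is compact, so this set is relatively compact in $H$ and $K_R$ is compact. Every $\gamma\in\Phi_M$ is concentrated on $\mathcal D$, and Chebyshev's inequality gives
\[
\gamma(K_R^c)\le \gamma(\{|\Psi|_H>R\})\le R^{-2}\int_H|\Psi|_H^2\,d\gamma\le 2M/R^2,
\]
uniformly over $\gamma\in\Phi_M$. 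Hence $\Phi_M$ is tight, and by Prokhorov's theorem it is relatively compact.

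Combining the two steps, $\Phi_M$ is compact for every $M$, so $I$ is a rate function. We expect the main obstacle to be the lower semicontinuity step, because $\Psi$ involves the unbounded operators $A$ and $Q^{-1/2}$. Lemma~\ref{Psi-truncation} has already reduced this to the monotone approximation by the continuous functions $\Psi_n$, so the remaining care lies only in the truncation-and-supremum argument above. Note also that measurability of $|\Psi|^2$ follows from it being a supremum of continuous functions, so $I$ is well defined.
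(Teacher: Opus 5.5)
Your proposal is correct and follows essentially the same route as the paper. It obtains tightness of $\Phi_M$ from the compact coercivity sets $K_R$ and Markov's inequality, and lower semicontinuity of $I$ from that of $y\mapsto|\Psi(y)|_H^2$ (Lemma~\ref{Psi-truncation}); your truncation-and-supremum argument is just an explicit proof of the Fatou-type inequality for weakly convergent measures that the paper invokes. Taking the closure in the definition of $K_R$ is a harmless variant, since the paper instead uses that $\{|\Psi|_H\le R\}$ is already closed by lower semicontinuity, and your bound $2M/R^2$ carries the correct constant.
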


\begin{proof}
We first show that for every $M<\infty$, $\Phi_M$ is a relatively compact subset of
$\mathcal{P}(H)$. 
For $R<\infty$, let
\[
  K_R  \coloneqq  \bigl\{ y \in H  :  |\Psi(y)|_H \leq R \bigr\}.
\]
From Remark \ref{Psi-domain}, for each $R<\infty$, $K_R$ is a compact subset of $H$. Also, by Markov's inequality, as $R\to \infty$,
$$
\sup_{\gamma \in \Phi_M} \gamma(K_R^c) \leq \frac{1}{R^2} \int_{H} |\Psi(y)|_H^2 \gamma(dy) \leq \frac{M}{R^2} \to 0$$
This proves the desired relative compactness. 
To complete the proof it suffices to argue that the functional
$I : \mathcal{P}(H) \to [0,\infty]$ is lower semicontinuous.
For that, consider
 $ (\gamma_n )_{n \in \N} \subset \mathcal{P}(H)$ converging weakly
to $\gamma \in \mathcal{P}(H)$. Then by Fatou's lemma and the lower semicontinuity of $y \mapsto |\Psi(y)|_H^2$ shown in Lemma \ref{Psi-truncation}, we have
$$I(\gamma) = \frac{1}{2}\int_H |\Psi(y)|_H^2\, \gamma(dy) \leq \liminf_{n\to \infty} \frac{1}{2}\int_H |\Psi(y)|_H^2\, \gamma_n(dy) = \liminf_{n\to \infty} I(\gamma_n).$$
The desired lower semicontinuity follows and completes the proof of the theorem.

\end{proof}

\section*{Acknowledgements} 
This material is based upon work supported by the National Science
Foundation under Grant No. DMS-2424139, while the  authors were in
residence at the Simons Laufer Mathematical Sciences Institute in
Berkeley, California, during the Fall 2025 semester.
AB would also like to acknowledge support from NSF RTG grant DMS-2134107.

\end{document}